\def\e{\varepsilon}
\def\rr{{\mathbb R}}
\def\dx{\,dx}
\def\NN{{\mathbb N}}
\def\ZZ{{\mathbb Z}}
\def\A{{\mathcal A}}%Notazione aggiunta
\newcommand{\I}{I}%Notazione aggiunta
\newcommand{\dis}{\mathcal{D}}%Notazione aggiunta
\newcommand{\Z}{Z}%Notazione aggiunta
\newcommand{\DD}{\mathscr{D}}%Notazione aggiunta
\newcommand{\PP}{\mathscr{P}}%Notazione aggiunta
\newcommand{\tempo}{\zeta}
\newcommand{\one}{\mathbb{I}}
\newcommand{\zero}{\mathbb{O}}
\newcommand{\enF}{\mathcal{F}^{\tau, \gamma}_\e}
\newcommand{\E}{\mathcal{E}}
\newcommand{\hh}{\mathcal{H}}
\newcommand{\nn}{\mathcal{N}}
\newcommand{\amin}{\alpha^{min}}
\newcommand{\amax}{\alpha^{max}}
\newcommand{\bmax}{\beta^{max}}
\newcommand{\bmin}{\beta^{min}}
\newcommand{\dseg}[2]{\llbracket #1, #2 \rrbracket}%segmento discreto
\DeclareMathOperator*{\R}{\mathbb{R}}
\DeclareMathOperator*{\argmin}{arg\,min}
\DeclareMathOperator*{\diam}{diam}%aggiunta
\DeclareMathOperator*{\som}{\sum_{i=1}^{4}}
\def\XXint#1#2#3{{\setbox0=\hbox{$#1{#2#3}{\int}$} 
  \vcenter{\hbox{$#2#3$}}\kern-.5\wd0}}
\numberwithin{equation}{section}
\newtheorem{theorem}{Theorem}[section]
\newtheorem*{theorem*}{Theorem}
\newtheorem{lemma}[theorem]{Lemma}
\newtheorem{proposition}[theorem]{Proposition}
\theoremstyle{definition}
\newtheorem{definition}[theorem]{Definition}
\newtheorem{remark}[theorem]{Remark}
\newtheorem{example}[theorem]{Example}
\title[Crystalline Motion for the Blume-Emery-Griffiths Model: partial wetting]{Crystalline Motion of discrete interfaces in the Blume-Emery-Griffiths Model: partial wetting}
\author[M. Cicalese]{Marco Cicalese}
\address[Marco Cicalese]{Technische Universit\"at M\"unchen, Boltzmannstrasse 3, 85748 Garching, Germany	}
\email[]{marco.cicalese@tum.de}
\author[G. Fusco]{Giuliana Fusco}
\address[Giuliana Fusco]{Scuola Superiore Meridionale, via Mezzocannone 4, 80134 Napoli, Italy}
\email[]{g.fusco@ssmeridionale.it}
\author[G. Savarè]{Giovanni Savar\'e}
\address[Giovanni Savar\'e]{Technische Universit\"at M\"unchen, Boltzmannstrasse 3, 85748 Garching, Germany	}
\email[]{giovanni.savare@tum.de}
\date{\today}  
\begin{document}

\begin{abstract}
We continue the variational study of the discrete-to-continuum evolution  of lattice systems of Blume–Emery–Griffith type which model two immiscible phases in the presence of a surfactant. In our previous work \cite{CFS}, we analyzed the case of a completely wetted crystal and described how the interplay between surfactant evaporation and mass conservation leads to a transition between crystalline mean curvature flow and pinned evolutions. In the present paper, we extend the analysis to the regime of partial wetting, where the surfactant occupies only a portion of the interface. Within the minimizing-movements scheme, we rigorously derive the continuum evolution and show how partial wetting introduces a complex coupling between interfacial motion and redistribution of surfactant. The resulting evolution exhibits new features absent in the fully wetted case, including the coexistence of moving and pinned facets or the emergence and long-lived metastable states. This provides, to our knowledge, the first discrete-to-continuum variational description of partially wetted crystalline interfaces, bridging the gap between microscopic lattice models and experimentally observed surfactant-induced pinning phenomena in immiscible systems.
\end{abstract}

\maketitle
	
	\vskip5pt
	\noindent
	\textsc{Keywords: Blume-Emery-Griffith surfactant model, discrete-to-continuum, minimizing movements, crystalline curvature flow} 
	\vskip5pt
	\noindent
	\textsc{AMS subject classifications: 53E10, 49J45, 74A50, 82B24, 82C24}

\tableofcontents

\section {Introduction}

Lattice systems provide a simple framework reach enough to investigate some basic mechanisms of energy-driven pattern formation and phase separation in statistical mechanics and materials science. The rigorous discrete-to-continuum limit of such microscopic systems can be obtained via $\Gamma$-convergence and often leads to coarse grained systems driven at the macroscopic scale by an anisotropic interfacial energy functional. Such functionals then govern at the macroscopic scale interface geometry and, in dynamical contexts, the effective laws of motion. In the static framework, the passage from lattice energies to crystalline perimeter functionals has been thoroughly studied (see \cite{ABCS} for an introduction to the subject) and includes classical ferromagnetic Ising-type systems firstly investigated in this setting in \cite{ABC}. In the dynamical setting, discrete minimizing-movements schemes à la Almgren-Taylor-Wang (see \cite{Almgren-Taylor, Almgren-Taylor-Wang, LS}) have been successfully combined with coarse-graining techniques to derive continuum geometric evolutions. They have revealed novel effects originating from the interaction of lattice and time scales which have eventually fostered additional studies (see for instance \cite{BCY, BGN, BMN, BSc, BST, CDGM, MN, MNPS, S} and the book \cite{BS}). Within this line of research we study the discrete Blume-Emery-Griffiths (BEG) surfactant model introduced in \cite{BEG} (see also \cite{EOT}). The BEG model is a three-state lattice spin system model in which the values $+1$ and $-1$ represent two immiscible bulk phases (e.g. water and oil) and the state $0$ models a surfactant phase which, when adsorbed on the interface, reduces the effective surface tension. On the static side, a rigorous variational coarse-graining for a broad class of surfactant lattice energies - including the BEG energy - was obtained in \cite{ACS} (see also \cite{LGGZ1, LGGZ2} for related studies on ground-state behavior), with earlier continuum treatments of surfactant effects appearing first in \cite{FMS} and then in \cite{AB} (see also \cite{CH1, CH2} for a vectorial and a non-local version of the problem). Building on the results obtained in \cite{ACS} and working within the minimizing-movements framework developed in \cite{BGN, BS} for lattice systems, in \cite{CFS} we initiated the analysis of the evolution of phases driven by the BEG model. We described the continuum limits of the associated discrete flows for several regimes related to a new dissipation mechanism described below and modelling both the case of surfactant evaporation and that of non evaporation and complete wetting. The present paper continues that program by (mostly) focusing on the regime of non evaporation of surfactant in the case of partial wetting described below.  Before introducing in more details the discrete evolution scheme, we shortly remind the variational discrete-to-continuum analysis of the static problem. Given a bounded open set $\Omega \subset \mathbb{R}^2$ with Lipschitz boundary, we define 
$\Omega_\e=\e\ZZ^2\cap \Omega$ as the set of lattice sites in $\Omega$. On $\Omega_\e$ we consider configurations $u\in {\mathcal{A}}_\e=\{u:\Omega_\e\rightarrow \{\pm1,0\}\}$. The values $u(p)=\pm 1$ encode the presence at the node $p$ of particles belonging to the two immiscible fluids phases while $u(p)=0$ represents a surfactant particle at $p$. The energy associated to a configuration $u$ in the BEG model is
\begin{equation}
\label{BEG_energy}\E^{latt}_\e(u)=\sum_{n.n.}\e^2(-u(p)u(q)+k(u(p)u(q))^2)
\end{equation}
where the sum is performed over nearest neighboring (n.n.) points of the lattice and $k>0$ is a constant. In \cite{ACS} it was shown that, as $\e\rightarrow 0$, upon identifying $u$ with its piecewise-constant interpolation on the cell of the lattice, the $\Gamma$-limit (with respect to the $L^{1}$-convergence) of $\mathcal{E}^{latt}_\e$ is finite on $L^{1}(\Omega;[-1,1])$ and it is given by the constant value $2|\Omega|(-1+k)\wedge 0$. Such a value is achieved by one of the three uniform states, namely $u=1$, $u=-1$ and $u=0$. As a consequence, on choosing $k<1$ one can select the uniform states $u=\pm1$ to be ground states and, in the spirit of development by $\Gamma$-convergence (see e.g. \cite{BT}) one can refer the energy to its minimum $m_\e:=\sum_{n.n.}\e^2(k-1)$ and further scale the functionals by $\e$ to obtain the new family of functionals
\begin{equation}
    \label{BEG_riscalata}
    \E_\e(u):=\frac{\E^{latt}_\e(u)-m_\e}{\e}=\sum_{n.n.}\e(1-u(p)u(q)-k(1-(u(p)u(q))^2)).
\end{equation}
On choosing $\frac{1}{3}<k<1$ in \eqref{BEG_riscalata}, interposing a line of phase 0 particles between two lines of particles belonging to opposite phases $-1,+1$. For such a choice of $k$ one is allowed to name the phase $0$ surfactant phase. This educated guess was confirmed in  \cite{ACS} where the $\Gamma$-limit as $\e\rightarrow 0$ (with respect to the $L^1$ convergence) of the energies $\E_\e$ was proved to be
\begin{equation}
    \label{BEG_riscalato_gamma_limite}
    \E(u)=\int_{S(u)}\psi(\nu_u)\, d\mathcal{H}^1
\end{equation}
where $u \in BV(\Omega;\{\pm1\})$, $S(u)$ denotes its jump set, namely the interface between the phases $\{u=1\}$ and $\{u=-1\}$, and $\nu_u$ is the (measure theoretic) normal 
to $S(u)$. The surface tension of the limit functional, denoted by $\psi(\nu)$, is given by $\psi(\nu)=(1-k)(3|\nu_1|\vee|\nu_2|+|\nu_1|\wedge|\nu_2|)$ and its anisotropy reflects that of the lattice geometry.\\

We now turn our attention to the discrete-to-continuum evolution of the BEG system which is defined via the iterative minimization scheme explained below. Given an initial configuration $u^\e_{0}$, one defines 
\begin{equation}\label{intro:schema}
u^\e_{j+1}\in\text{ argmin}\Bigl\{\E_\e(u^\e_{j+1})+\frac{1}{\tau}\dis_{\e, \gamma}(u^\e_{j+1}, u^\e_j)\Bigl\},
\end{equation}
where $\E_\e$ is the energy in \eqref{BEG_riscalata} and $\dis_{\e, \gamma}$ is a dissipation potential. We assume the latter to be the sum of two terms $\dis_{\e, \gamma}\coloneqq \dis_\e^1+\e^\gamma \dis^0_\e$. The term $\dis^1_\e$ is the usual Almgren-Taylor-Wang type dissipation (see \eqref{eq:dis_1}) penalizing changes of the geometry of the $j+1$ phase and already exploited in the context of discrete-to-continuum geometric evolutions in many papers after \cite{BGN}. The $\e^\gamma$-weighted term $\dis^0_\e$ penalizes variations of the surfactant mass. Therefore, the parameter $\gamma>0$ encodes the relative ease with which surfactant mass can change along the discrete evolution and, physically, it models the surfactant tendency to evaporate, i.e.,  to lose mass along the flow. As in \cite{CFS} we associate to each piecewise constant extension of the configuration $u^\e_j$ the set $A^\e_j\subset\mathbb R^2$ given by the union of the $\e$-cells where $u^\e_j=1$. We then study the piecewise-constant in time family $A^\e(t):=A^\e_{\lfloor t/\tau\rfloor}$ in the scaling regime $\tau(\e)=\zeta\e$ for some $\zeta>0$. Our main result is that $A^\e(t)$ converges uniformly locally in time, as $\e$ vanishes, to a limiting flow $A(t)$ whose qualitative behaviour for $\gamma<2$ in the non complete wetting regime is markedly different and richer than in the complete wetting case studied in \cite{CFS}. Note that, in order to reduce the complexity of the geometric description of the motion, we proved our results for initial sets that approximate those octagons which are Wulff shapes of the functional \eqref{BEG_riscalato_gamma_limite}. More precisely, given $A\subset \mathbb{R}^2$ an octagon in the sense of Definition~\ref{def:wulff_shape}, and $(A_\e)_\e$ a sequence of octagons approximating $A$ in the Hausdorff sense as $\e\to 0$, we consider the minimizing movement scheme \eqref{intro:schema} with initial configuration $u^\e_0$ such that $\{p\in\e\ZZ^2:u^\e_0(p)=1\}=A_\e\cap \e\ZZ^2$. Using the notation $I_j:=\{p\in\e\ZZ^2:u^\e_j(p)=1\}$ our aim is to describe the sequence $I_j$ starting from $I_0=A_\e\cap \e\ZZ^2$. In \cite[Lemma 5.1]{CFS} we proved that in case $\gamma<2$ the amount of surfactant (i.e. $\#\{u^\e_j=0\}$) remains constant at each time step $j$. We assume that the surfactant particles are present in the system and they do not completely wet the initial shape.
The first condition translates as $C_\e:=\#\{u^\e_0 = 0\}>0$. The second condition, also known as partial wetting condition, requires additional notation. Denoting by $P_i,\,D_i$ the lengths of the sides of $A$ parallel to the axes or diagonal, respectively (see Figure \ref{Intro0}),
 the partial wetting condition can be stated as the existence of $\lambda\in[0, +\infty)$ such that

\begin{equation}
\label{intro:partial_wetting}
    \lim_\e\e C_\e=\lambda<\sum_{i=1}^4 P_i+\frac{D_i}{\sqrt{2}}.
\end{equation}

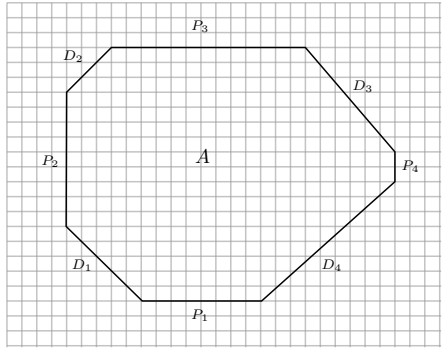
\begin{figure}
    \centering
    \resizebox{0.40\textwidth}{!}{\tikzset{every picture/.style={line width=0.75pt}} %set default line width to 0.75pt        

\begin{tikzpicture}[x=0.75pt,y=0.75pt,yscale=-1,xscale=1]
%uncomment if require: \path (0,300); %set diagram left start at 0, and has height of 300

%Shape: Grid [id:dp2918218225086341] 
\draw  [draw opacity=0] (160.44,30.33) -- (451.44,30.33) -- (451.44,261.33) -- (160.44,261.33) -- cycle ; \draw  [color={rgb, 255:red, 155; green, 155; blue, 155 }  ,draw opacity=0.6 ] (160.44,30.33) -- (160.44,261.33)(170.44,30.33) -- (170.44,261.33)(180.44,30.33) -- (180.44,261.33)(190.44,30.33) -- (190.44,261.33)(200.44,30.33) -- (200.44,261.33)(210.44,30.33) -- (210.44,261.33)(220.44,30.33) -- (220.44,261.33)(230.44,30.33) -- (230.44,261.33)(240.44,30.33) -- (240.44,261.33)(250.44,30.33) -- (250.44,261.33)(260.44,30.33) -- (260.44,261.33)(270.44,30.33) -- (270.44,261.33)(280.44,30.33) -- (280.44,261.33)(290.44,30.33) -- (290.44,261.33)(300.44,30.33) -- (300.44,261.33)(310.44,30.33) -- (310.44,261.33)(320.44,30.33) -- (320.44,261.33)(330.44,30.33) -- (330.44,261.33)(340.44,30.33) -- (340.44,261.33)(350.44,30.33) -- (350.44,261.33)(360.44,30.33) -- (360.44,261.33)(370.44,30.33) -- (370.44,261.33)(380.44,30.33) -- (380.44,261.33)(390.44,30.33) -- (390.44,261.33)(400.44,30.33) -- (400.44,261.33)(410.44,30.33) -- (410.44,261.33)(420.44,30.33) -- (420.44,261.33)(430.44,30.33) -- (430.44,261.33)(440.44,30.33) -- (440.44,261.33)(450.44,30.33) -- (450.44,261.33) ; \draw  [color={rgb, 255:red, 155; green, 155; blue, 155 }  ,draw opacity=0.6 ] (160.44,30.33) -- (451.44,30.33)(160.44,40.33) -- (451.44,40.33)(160.44,50.33) -- (451.44,50.33)(160.44,60.33) -- (451.44,60.33)(160.44,70.33) -- (451.44,70.33)(160.44,80.33) -- (451.44,80.33)(160.44,90.33) -- (451.44,90.33)(160.44,100.33) -- (451.44,100.33)(160.44,110.33) -- (451.44,110.33)(160.44,120.33) -- (451.44,120.33)(160.44,130.33) -- (451.44,130.33)(160.44,140.33) -- (451.44,140.33)(160.44,150.33) -- (451.44,150.33)(160.44,160.33) -- (451.44,160.33)(160.44,170.33) -- (451.44,170.33)(160.44,180.33) -- (451.44,180.33)(160.44,190.33) -- (451.44,190.33)(160.44,200.33) -- (451.44,200.33)(160.44,210.33) -- (451.44,210.33)(160.44,220.33) -- (451.44,220.33)(160.44,230.33) -- (451.44,230.33)(160.44,240.33) -- (451.44,240.33)(160.44,250.33) -- (451.44,250.33)(160.44,260.33) -- (451.44,260.33) ; \draw  [color={rgb, 255:red, 155; green, 155; blue, 155 }  ,draw opacity=0.6 ]  ;
%Straight Lines [id:da802179940311291] 
\draw    (230.44,60.33) -- (360.44,60.33) -- (420.44,130.33) -- (420.44,150.33) -- (331.11,230.33) -- (251.11,230.33) -- (200.11,180.33) -- (200.44,90.33) -- cycle ;

% Text Node
\draw (282.44,234.73) node [anchor=north west][inner sep=0.75pt]  [font=\scriptsize]  {$P_{1}$};
% Text Node
\draw (182,131.4) node [anchor=north west][inner sep=0.75pt]  [font=\scriptsize]  {$P_{2}$};
% Text Node
\draw (282.44,40.73) node [anchor=north west][inner sep=0.75pt]  [font=\scriptsize]  {$P_{3}$};
% Text Node
\draw (423.44,134.73) node [anchor=north west][inner sep=0.75pt]  [font=\scriptsize]  {$P_{4}$};
% Text Node
\draw (202.44,200.73) node [anchor=north west][inner sep=0.75pt]  [font=\scriptsize]  {$D_{1}$};
% Text Node
\draw (196.44,60.73) node [anchor=north west][inner sep=0.75pt]  [font=\scriptsize]  {$D_{2}$};
% Text Node
\draw (390.44,80.73) node [anchor=north west][inner sep=0.75pt]  [font=\scriptsize]  {$D_{3}$};
% Text Node
\draw (369.78,200.73) node [anchor=north west][inner sep=0.75pt]  [font=\scriptsize]  {$D_{4}$};
% Text Node
\draw (285,127.4) node [anchor=north west][inner sep=0.75pt]  [font=\normalsize]  {$A$};

\end{tikzpicture}}
    \caption{The octagon $A \subset \mathbb{R}^2$. The lengths of the sides parallel to the coordinate axes are denoted by $P_i$, while the lengths of the diagonal sides are denoted by $D_i$, with indices ordered clockwise, for $i=1,\dots, 4$.} 
    \label{Intro0}
\end{figure}

Under this assumption we show in Lemma~\ref{lemma:5_optimal_shape_not_surrounded} that, as long as the cardinality of surfactant particles $C_\e$ is sufficiently small, and in particular such that the surfactant particles at time step $j$ cannot surround $I_{j}$ (see \eqref{eq:boundary_not_surrounded}), then $I_{j+1}$ is a quasi-octagon (roughly speaking an octagon with some defects as additional cells on the sides parallel to the axes) according to Definition~\eqref{def:quasi_octagon}. The description of the evolution, which depends on $\lambda$ is qualitatively different in the case $\lambda\neq 0$ or $\lambda=0$. We describe here the more interesting case $\lambda\neq 0$, leaving the discussion in the case of negligible surfactant $\lambda=0$ to Subsection \eqref{subsec:negligible}. We qualitatively expect that the presence of surfactant particles, decreasing in an anisotropic way the surface tension, leads to an anisotropic evolution of crystalline mean curvature type. As the area of the evolving quasi-octagon decreases the conserved surfactant particles can progressively cover larger parts of the boundary affecting the motion by changing in particular its anisotropy as well as the side velocities. 
We show that, before $I_j$ is completely wetted by surfactant (after which the evolution follows an easier scheme detailed in \cite{CFS}), there exist three qualitatively different types of motion.

%%%%%%%%%%%%%%%%%%%%%%%%%%%%%%%%%%%%%%%%%%%%%%%%%%%%%%%%%%%%%%%%%%%%%%%%%%%%%%%%%%%%%%%%%%%%%

\begin{figure}[H]
    \centering
    \resizebox{0.50\textwidth}{!}{\input{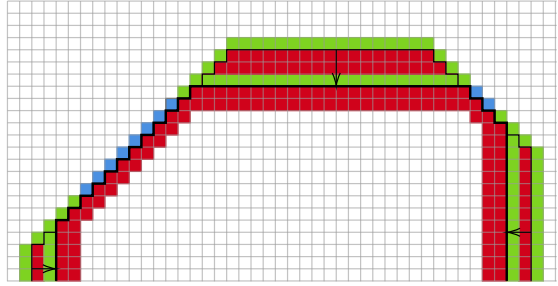}}
    \caption{Stage one of the evolution at $\varepsilon $ scale in the case $\gamma<2$ in the non complete wetting regime. We represent with a thicker black line the minimizer at time step $j+1$.}
    \label{Intro1}
\end{figure}
\noindent{\bf Stage one: ``pinning of the diagonals''.} In Proposition~\ref{prop:5.3_beginning_movement} we describe the motion of the quasi-octagon in case the amount of surfactant is not sufficient to wet its diagonals. Roughly speaking, this condition, expressed in formula \eqref{eq:6.23}, concerns the case the diagonal sides of $I_j$ are long enough to ensure that at the following time step the cells of surfactant will not be sufficient to fully wet the diagonals of $I_{j+1}$. As a result, the minimizing movement scheme is optimized by a configuration $u_{j+1}$ such that $I_{j+1}$ is a (possibly degenerate) discrete octagon whose diagonals $\DD_{j+1, i}$ for $i = 1, \dots, 4$ are pinned, i.e. they do not move with respect to the diagonals $\DD_{j, i}$ of $I_j$, so that it holds $\DD_{j+1, i}\subset \DD_{j, i}$ for all $i$ (see Figure \ref{Intro1}). In particular, only the parallel sides of $I_j$ move inwards with velocity given in \eqref{eq:movement_parallel_sides_not_covered}. \\
%A special situation occurs when $\lambda=0$. We deal with this case in Proposition \ref{prop:minimizing_movement_null_diagonal} and in the next Subsection \ref{subsec:negligible}.
\begin{figure}[H]
    \centering
    \resizebox{0.50\textwidth}{!}{\input{Figure2_intro2}}
    \caption{Stage twp of the evolution at $\varepsilon $ scale in the case $\gamma<2$ in the non complete wetting regime. We represent with a thicker black line the minimizer at time step $j+1$.}
    \label{Intro2}
\end{figure}

\noindent{\bf Stage two: ``surfactant dependent side velocities''} The second stage is presented in Proposition~\ref{prop:5.21}, and describes the motion in case the amount of surfactant is at the same time not sufficient to completely wet $I_j$ (condition \eqref{eq:boundary_not_surrounded}), and too much to wet only its diagonals (condition \eqref{eq:5.32}). In this case, as long as the diagonal sides of $I_j$ are sufficiently long, the minimizing movement scheme is optimized by a configuration $u_{j+1}$ such that $I_{j+1}$ is a quasi-octangon (see Figure \ref{Intro2}) and the sides velocities are influenced by the presence of the surfactant, and are given by \eqref{eq:sides_movement_quasi_octagon} and \eqref{eq:sides_movement_quasi_octagon_2}. 

\begin{figure}[H]
    \centering
    \resizebox{0.50\textwidth}{!}{\input{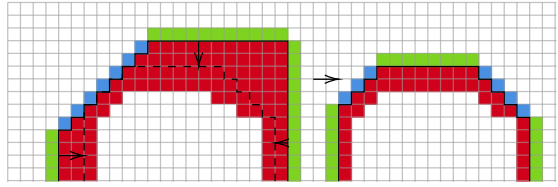}}
    \caption{Example~\eqref{ex:degenerate_octagon_evolution}: stage three of the evolution at $\varepsilon$ scale in the case $\gamma<2$ in the non complete wetting regime when $I_j$ is a  degenerate octagon. On the left we illustrate the displacement of a subset of the sides of $I_j$; the dashed line denotes the corresponding subset of the sides of $I_{j+1}$. On the right side we represent $I_{j+1}$ and the particles that are on its boundary. }
    \label{Intro3}
\end{figure}
\noindent{\bf Stage three: ``nonlocal averaging of velocities by surfactant redistribution''} The third stage is presented in Proposition~\ref{prop:sides_movement_when_set_remains_octagon}. It is an intermediate stage which occurs if both the two previous scenarios do not apply, i.e., if \eqref{eq:6.23} and \eqref{eq:5.32} are not satisfied. In this stage, roughly speaking, the surfactant cells wet the diagonal sides (up to some negligible amount of cells), i.e., denoting by $D_{j, i}$ the length of the $i$-th diagonal side $\DD_{j, i}$ of the quasi-octagon at time step $j$, the condition $\som D_{j, i}/\sqrt{2}\approx\lambda$ holds true. We show in \eqref{eq:sides_movement_when_set_remains_octagon} and \eqref{eq:sides_movement_when_set_remains_octagon_2}  that in this case, at every time step, surfactant cells wetting the longer diagonal sides at step $j$ move to wet the shorter diagonal sides at the next time step $j+1$, favouring their growth. Since in this intermediate stage the sum of the lengths of the diagonals is (approximately) constant and equal to $\lambda\sqrt{2}$ at every time step, the equation governing the sides displacements is of nonlocal type, i.e. the displacement of one side depends on the lengths of all the other sides. The overall effect is an averaging of the velocities of the different sides. 
\begin{figure}[H]
    \centering
    \resizebox{0.50\textwidth}{!}{\input{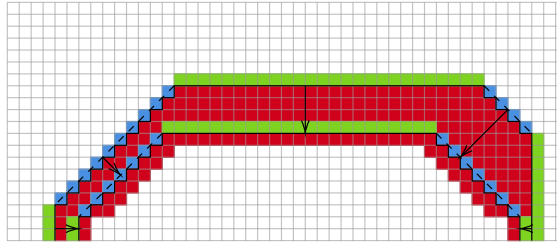}}
    \caption{Example~\ref{ex:surfactant_on_diagonals}: stage three of the evolution at $\varepsilon $ scale in the case $\gamma<2$ in the non complete wetting regime when $I_j$ is a (non degenerate) octagon with diagonals completely  wetted by surfactant particles.}
    \label{Intro4}
\end{figure}
\noindent 
In Example~\ref{ex:degenerate_octagon_evolution} we point out the interesting consequences of this stage of motion is the case of a degenerate octagon, having one of its diagonal sides of vanishing length, and the surfactant precisely covering the other diagonals. In this situation the degenerate vanishing diagonal side gradually increases its length (see Figure \ref{Intro3}) and moves with a velocity depending on the length of the neighbouring parallel sides.
Moreover, in a further example (Example~\ref{ex:surfactant_on_diagonals}) we point out that this stage is not a transient regime, but it can actually last for a macroscopic time interval. More precisely in this example we assume either that $I_{j}$ is a non degenerate octagon whose diagonals are completely wetted by surfactant (see Figure \ref{Intro4}) or it is a quai-octagon such that such that there exists only one surfactant particle that does not fit on the diagonal sides (see Figure \ref{Intro5}).

\begin{figure}[H]
    \centering
    \resizebox{0.50\textwidth}{!}{\input{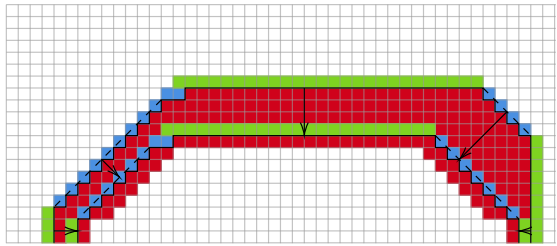}}
    \caption{Example~\ref{ex:surfactant_on_diagonals}: stage three of the evolution at $\varepsilon $ scale in the case $\gamma<2$ in the non complete wetting regime when $I_j$ is a quasi-octagon with diagonals completely  wetted by surfactant particles and there exists only one surfactant particle that does not fit on the diagonals.}
    \label{Intro5}
\end{figure}

\noindent These three stages capture the rich interplay between geometry and limited surfactant resources: depending on the initial geometry and on $\lambda$, the discrete flow may evolve from one regime to another (see Example~\ref{ex:dependence_initial_data}), and transitions can lead either to a stage of complete wetting (thus falling back into the analysis of \cite{CFS}), or to recurrent intermediate configurations with persistent partial wetting.\\

Finally, in Section~\ref{sec:gamma=2}, we analyze the case $\gamma=2$, where at each time step the dissipation due to the surfactant is of the same order as both the phase-one dissipation and the variation of the energy $\E_\e$. We show that the evolution depends on the relation between the parameters $\tempo$ and $k$. Our analysis shows that in most cases the description of the motion can be reduced to one of the previously discussed evolutions for $\gamma<2$. \\

\noindent The paper develops a rigorous discrete-to-continuum analysis of a lattice-driven geometric evolution in the presence of a finite and non-saturating amount of surfactant. From a mathematical viewpoint, the main novelties are threefold. First, we introduce and handle a partial wetting constraint at the discrete level, which leads to a nontrivial combinatorial-geometric structure of admissible minimizers and requires a refined control of the interface geometry. Second, we construct discrete barrier arguments adapted to crystalline geometries, allowing us to rigorously identify pinning phenomena and regime transitions within the minimizing-movements scheme. Third, we derive explicit, anisotropic and, as far as we know, for the first time genuinely nonlocal laws of motion for the side displacements of evolving quasi-octagons, where the velocity of each side depends on the global distribution of surfactant along the interface. These results reveal new mechanisms in lattice evolutions, showing how nonlocal effects and resource constraints can qualitatively alter crystalline curvature flows before the onset of complete wetting. It is worth noticing that our rigorous characterization of discrete pinning phenomena induced by an insufficient amount of surfactant are a scenario directly connected to experimental observations of contact-line pinning and partial wetting in surfactant-laden systems (see \cite{D1997, K2019, S2013, S2017, Z2017}).\\

The paper is organized as follows. In Section~\ref{sec:formulation} we set the notation and recall preliminary results regarding the complete-wetting case from \cite{CFS}. Subsection \ref{subsec:references} revisits the complete-wetting case for $\gamma>2$ (collected from \cite{CFS}), while Section~\ref{sec:gamma<2} contains the main analysis of the non complete wetting regime for $\gamma<2$, where we identify the three stages described above and establish the corresponding motion laws. Finally, Section~\ref{sec:gamma=2} is devoted to the critical case $\gamma=2$ and to further discussions on transitions among regimes.

\section{Formulation of the problem and preliminary results}\label{sec:formulation}
\subsection{Notation}
We denote by $\NN$ the set of natural numbers starting from $1$. We denote by $\e\ZZ^2$ the square lattice with lattice spacing $\e>0$. We will call the points of $\e\ZZ^2$ with the letters $p, q$. When dealing with more than two points we name them as $p^i$, and we denote by $p^i = (p^i_1, p^i_2)$ their components. We write $p\preccurlyeq q$ to say that $p_1\le q_1$ and $p_2\le q_2$. The symmetric difference of two sets $A, B\subset \mathbb{R}^2$ is denoted by $A \triangle B$, their Hausdorff distance by $d_{\mathcal{H}}(A,B)$. For $i=1, 2$ we denote by $e_i$  the standard basis of $\rr^2$. Given $p\in \e\ZZ^2$ we define $\mathcal{N}(p):=\{p\pm\e e_i:\:i=1, 2\}$ so that, in particular, $\#\mathcal{N}(p) = 4$ for every $p\in\e\ZZ^2$. For \( x \in \mathbb{R} \), we denote by \( \lfloor x \rfloor \) and \( \lceil x \rceil \) the floor and ceiling of \( x \), respectively. When no ambiguity arises, we write \( [x] \) to denote the closest integer to \( x \). In the proofs we will often use the symbol $c$ to denote a generic constant, whose value may change form line to line. 
%All the inequalities that follow hold true for $\e$ sufficiently small. 

\subsection{Setting of the problem} On the scaled square lattice $\e\ZZ^2$ we consider the set \[\A_\e\coloneqq\Bigl\{u\colon\e\ZZ^2\to\{\pm1,0\}\text{ such that }\#\{p\in\e\ZZ^2:u(p) = 0\}<+\infty\Bigl\}\]
and the family of energies $\E_\e:\A_\e\mapsto\rr$ defined as follows
\begin{equation*}
%\label{eq:energy}
\E_\e(u) = \sum_{n.n.}\e(1-u(p)u(q)-k(1-(u(p)u(q))^2)),    
\end{equation*}
where $n.n.$ means that the sum is taken over those $p, q\in \e\ZZ^2$ such that $|p-q| = \e$ and $k\in (1/3, 1)$.
%\rewrite{Controllare se serve la versione locale... in realtà non credo} Given $I, J\subset\e\ZZ^2$ and $u\in\A_\e$ we also use the following notation for a local version of \eqref{eq:energy}, namely
%\begin{equation}\label{eq:local_energy}
%\E_\e(u, I, J):=\sum
%_{\substack{n.n.:\\p\in I, q\in J\\p\preccurlyeq  q}}\e\Bigl(1-u(p)u(q)-k\bigl(1-(u(p)u(q))^2\bigl)\Bigl),
%\end{equation}
%where the constraint $p\preccurlyeq q$ is needed to avoid double counting.
%In the case $J = \e\ZZ^2$ we will simply write $\E_\e(u, I)$ in place of $\E_\e(u, I, \e\ZZ^2)$.
In the following, for $I\subset\e\ZZ^2$ we set \begin{equation}
%\label{eq:set_A}
A_I:=\bigcup_{p\in I}Q_\e(p), \quad Q_\e(p):=p+\e[-1/2, 1/2]^2.
\end{equation} %5
For $u^\e\in\A_\e,$ and for any sequence $(u^\e_j)_j\subset\A_\e$, we set
\begin{equation*}
%\label{eq:I_j}
          I_{u^\e}:=\{p\in\e\ZZ^2:u^\e(p)=1\},\;\;Z_{u^\e}:= \{p\in\e\ZZ^2:u^\e(p)=0\},\;\;A_{u^\e}:=A_{I_{u^\e}}
\end{equation*}
and 
\begin{equation*}
    I_j^\e := I_{u_{j}^\e},\,Z^\e_j  := \Z_{u^\e_j},\, A^\e_j := A_{u_j^\e}.
\end{equation*}
Given $I\subset\e\ZZ^2$ and $s=1, +\infty$ we define the discrete $L^s$-distance of a point $p\in\e\ZZ^2$ from $I$ as
\begin{equation*}
%\label{eq:distance_1}
d^\e_s(p, I) = \inf\{||p-q||_s:q\in I\},
\end{equation*}
where $||p||_1 = |p_1|+|p_2|$ and $||p||_\infty = \max\{|p_1|, |p_2|\}$. We also introduce  the discrete $L^s$-distance of $p$ from the discrete boundary of $I$
\begin{equation*}
%\label{eq:distance_2}
d^\e_s(p, \partial I) := \begin{cases}
    \inf\{||p-q||_s:q\in I\}&\text{ if }p\not\in I,\\
    \inf\{||p-q||_s:q\in\e\ZZ^2\setminus I\}&\text{ if }p\in I.
\end{cases}    
\end{equation*}
We observe that, up to a set of negligible measure, we can extend this distance to all $x\in\rr^2$ by setting 
\[d^\e_s(x, \partial I) := d^\e_s(p, \partial I)  \text{ if }x\in Q_\e(p).\]
The discrete diameter of a set $I\subset\e\ZZ^2$ is given by $\diam(I):=\sup\{d^\e_\infty(p, q):p,\,q\in I\}$, while its exterior and interior boundary are
\[\partial^+ I\coloneqq\{p\in\e\ZZ^2\setminus I:d^\e_1(p, \partial I) = \e\},\quad\partial^-I\coloneqq\{p\in I:d^\e_1(p, \partial I) = \e\},\] respectively.
%If $A = A_I\subset\R^2$ for some set $I\subset\e\ZZ^2$, then, with a slight abuse of notation, we will adopt the notation $\partial^\pm A_I := \partial^\pm I\subset\e\ZZ^2$. We will write instead $\partial A\subset\rr^2$ to denote the topological boundary of $A.$

For $\gamma>0$ we introduce the dissipation functional
$\dis_{\e, \gamma}:\:\A_\e\times\A_\e\to [0,+\infty]$ defined as \[\dis_{\e, \gamma}(u, w):=\dis^1_\e(\I_{u}, \I_w)+\e^\gamma\dis^0_\e(Z_{u}, Z_w),\] where $\dis^1_\e$ and $\dis^0_\e$ are defined as
\begin{equation}\label{eq:dis_1}
\dis_\e^1(\I_{u},\I_{w}):=\sum_{p\in\I_{u}\triangle \I_{w}}\e^2d^\e_1(p, \partial\I_{w}) = \int_{A_{\I_{u}}\triangle A_{\I_w}}d^\e_1(x, \partial A_{\I_w})\dx    
\end{equation}
 and 
\begin{equation*}
%\label{eq:dis_0}
\dis^0_\e(\Z_{u}, \Z_{w}):=|\#\Z_{u}-\#\Z_{w}|.
\end{equation*}
For $\tau,\gamma>0$ we  also introduce the total energy $\enF:\A_\e\times\A_\e\to\rr$  given by
\begin{equation*}
%\label{eq:functional}
\enF(u, w)\coloneqq \E_\e(u)+\frac{1}{\tau}\dis_{\e, \gamma}(u, w).
\end{equation*}
In the following we always consider $\tau = \zeta\e$, for a given parameter $\zeta>0$.

\begin{definition}\label{def:minimizing_movement}
    Given $u^\e_0\in\A_\e$, we say that a sequence $(u^\e_j)_j\subset\A_\e$ defined for every $j\in\NN\cup\{0\}$ is a minimizing movement with starting datum $u^\e_0$ if for every $j$ it holds 
\begin{equation}\label{eq:definition_u_j}    u^\e_{j+1}\in\text{argmin}\Bigl(u\mapsto \enF(u, u_j)\Bigl).
\end{equation}
In the following, we will always suppose that the set $I_0^\e$ associated to the initial datum $u^\e_0$ is the discretization of an octagon, in the sense of Definition~\ref{def:discrete_octagon}.
\end{definition}
In the following proofs in order to deduce some conditions on the geometry of the sets $A_j$, it will often be useful to compare a minimizer $u_j$ with a new function $\tilde{u}_j$ obtained from $u_j$ changing its values only at a few points. For $\I\subset\e\ZZ^2$ we use the notation $u_j(\I)\mapsto\xi$ to indicate that for every point $p\in\I$ we replace the value of $u_j$ at $p$ with the new value $\xi\in\{\pm1, 0\}$, and that we do not modify $u_j$ anywhere else.

In all the pictures that follow, $Z_u$ is colored in blue, $\I_u$ is colored in red and $\{p\in \varepsilon \mathbb{Z}^2 :u(p)=-1\}$ is colored in green.

\subsection{Preliminary results}\label{subsec:references}
Here we recall some useful results from \cite{CFS}. To this end we begin introducing some additional definition.
\subsubsection{Discrete sets}

\begin{definition}\label{def:discrete_path}

    We say that a finite ordered set $\pi = \{p^1, \dots, p^n\}\subset\e\ZZ^2$ is a {\it discrete path} connecting $p^1$ and $p^n$ if $d^\e_1(p^i, p^{i+1}) = 1$ for every $i = 1, \dots, n-1.$ We also denote by $\Gamma_{p, q}(\I)$ the space of all paths $\pi$ connecting $p$ and $q$ such that $\pi\subset\I$.\\
    We say that a set $\I\subset\e\ZZ^2$ is {\it connected} if for every pair $p, q\in\I$ there exists $\pi\in\Gamma_{p, q}(I)$.
\end{definition}

\begin{definition}\label{def:discrete_segment}
	Let $p, q\in\e\ZZ^2$ and $n\in\NN\cup\{0\}$ be such that 
	\[q = p+n\e e_i\text{ for }i=1\text{ or } i=2,\quad\text{ or }\;q = p+n\e(e_1\pm e_2).\]
	We define the (discrete) segment connecting $p$ and $q$ as $\dseg{p}{q}:=\{p+s\e e_i:s = 0, \dots, n\}\subset\e\ZZ^2$ in the first case, and as $\dseg{p}{q}:=\{p+s\e(e_1\pm e_2):s = 0, \dots, n\}\subset\e\ZZ^2$ in the second case.
\end{definition}

\begin{definition}\label{def:convexity}
    We say that a set $\I\subset\e\ZZ^2$ is {\it horizontally} (resp. {\it vertically}) {\it convex} if for every $p\in\I,$ and every $n\in\NN$ such that the point $q := p+n\e e_1  \text{ (resp.}\ q:= p+n\e e_2)$ belongs to $\I$, then the entire discrete segment $\dseg{p}{q}$ is contained in $\I$.
\end{definition}

\begin{definition}\label{def:staircase_set}
     We say that $\I\subset\e\ZZ^2$ is a {\it staircase set} if it is connected, and horizontally and vertically convex. Moreover, we define $R_\I$ as the smallest discrete rectangle which contains $\I$, and we say that $\I$ is a {\it non degenerate staircase set} if the set $R_\I\setminus\I=:T$ has four (not empty) connected components $T_i, i=1, \dots, 4.$ We label the components $T_i$ clockwise setting $T_1$ to be the lowest left component (see Figure \ref{fig:staircasedef}).
\begin{figure}[H]
    \centering
    \resizebox{0.40\textwidth}{!}{\tikzset{every picture/.style={line width=0.75pt}} %set default line width to 0.75pt        

\begin{tikzpicture}[x=0.75pt,y=0.75pt,yscale=-1,xscale=1]
%uncomment if require: \path (0,300); %set diagram left start at 0, and has height of 300

%Shape: Grid [id:dp2658581869010974] 
\draw  [draw opacity=0] (250.33,50) -- (480.44,50) -- (480.44,280.33) -- (250.33,280.33) -- cycle ; \draw  [color={rgb, 255:red, 155; green, 155; blue, 155 }  ,draw opacity=0.6 ] (250.33,50) -- (250.33,280.33)(260.33,50) -- (260.33,280.33)(270.33,50) -- (270.33,280.33)(280.33,50) -- (280.33,280.33)(290.33,50) -- (290.33,280.33)(300.33,50) -- (300.33,280.33)(310.33,50) -- (310.33,280.33)(320.33,50) -- (320.33,280.33)(330.33,50) -- (330.33,280.33)(340.33,50) -- (340.33,280.33)(350.33,50) -- (350.33,280.33)(360.33,50) -- (360.33,280.33)(370.33,50) -- (370.33,280.33)(380.33,50) -- (380.33,280.33)(390.33,50) -- (390.33,280.33)(400.33,50) -- (400.33,280.33)(410.33,50) -- (410.33,280.33)(420.33,50) -- (420.33,280.33)(430.33,50) -- (430.33,280.33)(440.33,50) -- (440.33,280.33)(450.33,50) -- (450.33,280.33)(460.33,50) -- (460.33,280.33)(470.33,50) -- (470.33,280.33)(480.33,50) -- (480.33,280.33) ; \draw  [color={rgb, 255:red, 155; green, 155; blue, 155 }  ,draw opacity=0.6 ] (250.33,50) -- (480.44,50)(250.33,60) -- (480.44,60)(250.33,70) -- (480.44,70)(250.33,80) -- (480.44,80)(250.33,90) -- (480.44,90)(250.33,100) -- (480.44,100)(250.33,110) -- (480.44,110)(250.33,120) -- (480.44,120)(250.33,130) -- (480.44,130)(250.33,140) -- (480.44,140)(250.33,150) -- (480.44,150)(250.33,160) -- (480.44,160)(250.33,170) -- (480.44,170)(250.33,180) -- (480.44,180)(250.33,190) -- (480.44,190)(250.33,200) -- (480.44,200)(250.33,210) -- (480.44,210)(250.33,220) -- (480.44,220)(250.33,230) -- (480.44,230)(250.33,240) -- (480.44,240)(250.33,250) -- (480.44,250)(250.33,260) -- (480.44,260)(250.33,270) -- (480.44,270)(250.33,280) -- (480.44,280) ; \draw  [color={rgb, 255:red, 155; green, 155; blue, 155 }  ,draw opacity=0.6 ]  ;
%Shape: Rectangle [id:dp6388582465604719] 
\draw   (280.33,80) -- (450.33,80) -- (450.33,250) -- (280.33,250) -- cycle ;
%Straight Lines [id:da18098053794472357] 
\draw [color={rgb, 255:red, 208; green, 2; blue, 27 }  ,draw opacity=1 ][line width=0.75]    (320.33,80) -- (390.33,80) -- (390.33,90) -- (410.33,90) -- (410.33,110) -- (420.33,110) -- (420.33,120) -- (450.33,120) -- (450.33,160) -- (440.33,160) -- (440.33,190) -- (430.33,190) -- (430.33,200) -- (420.33,200) -- (420.33,220) -- (400.33,220) -- (400.33,230) -- (400.33,250) -- (330.33,250) -- (330.33,230) -- (320.33,230) -- (320.33,220) -- (310.33,220) -- (310.33,210) -- (300.33,210) -- (300.33,200) -- (290.33,200) -- (290.33,190) -- (280.33,190) -- (280.33,140) -- (290.33,140) -- (290.33,120) -- (310.33,120) -- (310.33,90) -- (320.33,90) -- cycle ;

% Text Node
\draw (362.33,153.4) node [anchor=north west][inner sep=0.75pt]  [font=\small]  {$I$};
% Text Node
\draw (282.33,213.4) node [anchor=north west][inner sep=0.75pt]  [font=\footnotesize]  {$T_{1}$};
% Text Node
\draw (292.33,93.4) node [anchor=north west][inner sep=0.75pt]  [font=\footnotesize]  {$T_{2}$};
% Text Node
\draw (422.33,93.4) node [anchor=north west][inner sep=0.75pt]  [font=\footnotesize]  {$T_{3}$};
% Text Node
\draw (432.33,213.4) node [anchor=north west][inner sep=0.75pt]  [font=\footnotesize]  {$T_{4}$};
% Text Node
\draw (442.33,53.4) node [anchor=north west][inner sep=0.75pt]  [font=\small]  {$R_{I}$};

\end{tikzpicture}}
    \caption{In red an example of non degenerate staircase set $I$; in black the smallest discrete rectangle $R_I$ which contains $I$.}
    \label{fig:staircasedef}
\end{figure}
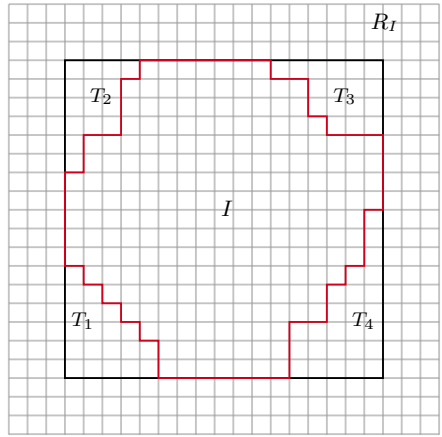
    Moreover, we denote by
    \begin{equation*}
    \label{horizontal_slices}
        H^i = \dseg{p^{h, i}}{q^{h, i}},\,\quad \text{for}\ i=1 \dots, n_h,
    \end{equation*}
    \begin{equation*}
    \label{vertical_slices}
        V^i = \dseg{p^{v, i}}{q^{v, i}}\,\quad \text{for}\ i=1, \dots, n_v,
    \end{equation*} 
    for some $n_h, n_v \in \NN$,
    the horizontal and vertical slices of $\I$, respectively. We label $H^i$ in ascending order starting from the lowest slice, i.e. writing $p^{h,i} = (p^{h,i}_1, p^{h,i}_2)$, $q^{h,i} = (q^{h,i}_1, q^{h,i}_2)$, it holds $p^{h, i}_2 = q^{h, i}_2$ for every $i$, and $p^{h,i}_2<p^{h, j}_2$ if $i<j$. We label the slices $V^i$ from left to right, so that $p^{v, i}_1<p^{v, j}_1$ if $i<j$. We also set \begin{equation}
    \label{eq:parallel_sides_and_slices}
        \PP_1 := H^{1},\,\PP_2 := V^{1},\,\PP_3 := H^{n_h},\,\PP_4 := V^{n_v},\,
    \end{equation}
    to denote the most ``external'' slices. 
We observe that the perimeter of $A_\I$ is \begin{equation*}
\label{eq:length_perimeter}
        Per(A_\I) = 2\e(n_h+n_v),
\end{equation*}
while it holds
\begin{equation*}
%\label{eq:length_external_boundary}
     \#\partial^+\I = \frac{Per(A_\I)}{\e}-\#\{p\in \e\ZZ^2\setminus\I:\#(\mathcal{N}(p)\cap\I) = 2\}.
\end{equation*}
With a slight abuse of notation, for $I\subset \e\ZZ^2$ we write $Per(I)$ instead of $Per(A_I).$
\end{definition}

\begin{definition}\label{def:wulff_shape}
    [Wulff-type shape] We denote by $\mathcal{W}$ the set of all the convex octagons contained in $\rr^2$ whose sides are parallel to $e_1, e_2, e_1+e_2$ or $e_1-e_2$. We say that an element of $\mathcal{W}$ is {\it non degenerate} if it has exactly eight sides of positive length. For simplicity, we refer to these Wulff-type shapes as {\it octagons}. For $i=1, \dots, 4,$ we denote by $P_i$ (resp. $D_i$) the length of the sides of the octagon which are parallel either to $e_1$ or $e_2$ (resp. to $e_1 + e_2$ or $e_1-e_2$). We set $P_1$ (resp. $D_1$) to be the length of the lowest side parallel to $e_1$ (resp. to $e_1 - e_2$) and we label all lengths $P_i$ and $D_i$ in clockwise order.
\end{definition}

\begin{definition}\label{def:discrete_octagon}
    [Discrete Wulff-type shape] We say that $I\subset \e\ZZ^2$ is a {\it discrete octagon} if there exists $A\in\mathcal{W}$ such that $I = A\cap\e\ZZ^2$
    (see Figure \ref{fig:disc_oct}).
    \begin{figure}[H]
    \centering
    \resizebox{0.40\textwidth}{!}{\tikzset{every picture/.style={line width=0.75pt}} %set default line width to 0.75pt        

\begin{tikzpicture}[x=0.75pt,y=0.75pt,yscale=-1,xscale=1]
%uncomment if require: \path (0,300); %set diagram left start at 0, and has height of 300

%Shape: Grid [id:dp07754212788698733] 
\draw  [draw opacity=0] (260.44,90.33) -- (441.44,90.33) -- (441.44,251.33) -- (260.44,251.33) -- cycle ; \draw  [color={rgb, 255:red, 155; green, 155; blue, 155 }  ,draw opacity=0.6 ] (260.44,90.33) -- (260.44,251.33)(270.44,90.33) -- (270.44,251.33)(280.44,90.33) -- (280.44,251.33)(290.44,90.33) -- (290.44,251.33)(300.44,90.33) -- (300.44,251.33)(310.44,90.33) -- (310.44,251.33)(320.44,90.33) -- (320.44,251.33)(330.44,90.33) -- (330.44,251.33)(340.44,90.33) -- (340.44,251.33)(350.44,90.33) -- (350.44,251.33)(360.44,90.33) -- (360.44,251.33)(370.44,90.33) -- (370.44,251.33)(380.44,90.33) -- (380.44,251.33)(390.44,90.33) -- (390.44,251.33)(400.44,90.33) -- (400.44,251.33)(410.44,90.33) -- (410.44,251.33)(420.44,90.33) -- (420.44,251.33)(430.44,90.33) -- (430.44,251.33)(440.44,90.33) -- (440.44,251.33) ; \draw  [color={rgb, 255:red, 155; green, 155; blue, 155 }  ,draw opacity=0.6 ] (260.44,90.33) -- (441.44,90.33)(260.44,100.33) -- (441.44,100.33)(260.44,110.33) -- (441.44,110.33)(260.44,120.33) -- (441.44,120.33)(260.44,130.33) -- (441.44,130.33)(260.44,140.33) -- (441.44,140.33)(260.44,150.33) -- (441.44,150.33)(260.44,160.33) -- (441.44,160.33)(260.44,170.33) -- (441.44,170.33)(260.44,180.33) -- (441.44,180.33)(260.44,190.33) -- (441.44,190.33)(260.44,200.33) -- (441.44,200.33)(260.44,210.33) -- (441.44,210.33)(260.44,220.33) -- (441.44,220.33)(260.44,230.33) -- (441.44,230.33)(260.44,240.33) -- (441.44,240.33)(260.44,250.33) -- (441.44,250.33) ; \draw  [color={rgb, 255:red, 155; green, 155; blue, 155 }  ,draw opacity=0.6 ]  ;
%Straight Lines [id:da44147834662389684] 
\draw [color={rgb, 255:red, 208; green, 2; blue, 27 }  ,draw opacity=1 ][line width=0.75]    (270.33,130) -- (270.33,200) -- (280.33,200) -- (280.33,210) -- (290.33,210) -- (290.33,220) -- (300.33,220) -- (300.33,230) -- (310.33,230) -- (310.33,240) -- (400.33,240) -- (400.33,230) -- (410.33,230) -- (410.33,220) -- (420.33,220) -- (420.33,210) -- (430.33,210) -- (430.33,140) -- (420.33,140) -- (420.33,130) -- (410.33,130) -- (410.33,120) -- (400.33,120) -- (400.33,110) -- (390.33,110) -- (390.33,100) -- (300.33,100) -- (300.33,110) -- (290.33,110) -- (290.33,120) -- (280.33,120) -- (280.33,130) -- cycle ;
%Straight Lines [id:da5722454048286466] 
\draw  [dash pattern={on 4.5pt off 4.5pt}]  (270.33,200) -- (270.44,130.33) -- (300.33,100) -- (390.33,100) ;
%Straight Lines [id:da9670764259613872] 
\draw  [dash pattern={on 4.5pt off 4.5pt}]  (400.33,240) -- (430.33,210) -- (430.44,140.33) ;
%Straight Lines [id:da8171218856942098] 
\draw  [dash pattern={on 4.5pt off 4.5pt}]  (400.33,240) -- (310.44,240.33) -- (270.33,200) ;
%Straight Lines [id:da5459149063985377] 
\draw  [dash pattern={on 4.5pt off 4.5pt}]  (430.33,140) -- (390.33,100) ;

% Text Node
\draw (352.33,173.4) node [anchor=north west][inner sep=0.75pt]  [font=\normalsize,color={rgb, 255:red, 208; green, 2; blue, 27 }  ,opacity=1 ]  {$I$};
% Text Node
\draw (410,98.4) node [anchor=north west][inner sep=0.75pt]    {$A$};
% Text Node
\draw (389.33,228.4) node [anchor=north west][inner sep=0.75pt]  [font=\tiny]  {$q^{1}$};
% Text Node
\draw (270.33,189.4) node [anchor=north west][inner sep=0.75pt]  [font=\tiny]  {$p^{2}$};
% Text Node
\draw (300.33,100.4) node [anchor=north west][inner sep=0.75pt]  [font=\tiny]  {$p^{3}$};
% Text Node
\draw (419.33,140.4) node [anchor=north west][inner sep=0.75pt]  [font=\tiny]  {$q^{4}$};
% Text Node
\draw (310.33,229.4) node [anchor=north west][inner sep=0.75pt]  [font=\tiny]  {$p^{1}$};
% Text Node
\draw (270.33,130.4) node [anchor=north west][inner sep=0.75pt]  [font=\tiny]  {$q^{2}$};
% Text Node
\draw (379.33,100.4) node [anchor=north west][inner sep=0.75pt]  [font=\tiny]  {$q^{3}$};
% Text Node
\draw (419.33,199.4) node [anchor=north west][inner sep=0.75pt]  [font=\tiny]  {$p^{4}$};

\end{tikzpicture}}
    \caption{In black we represent an example of $A \in \mathcal{W}$ and in red the discrete octagon $I$ obtained as $A \cap \varepsilon \mathbb{Z}^2$.}
    \label{fig:disc_oct}
\end{figure}
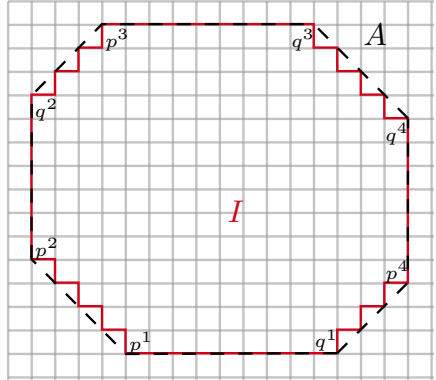

    We say that $I$ is a {\it degenerate} discrete octagon if $A$ is a degenerate octagon. Since a discrete octagon is a staircase set, we adopt notation \eqref{eq:parallel_sides_and_slices} to refer to the sides of $I$ which are parallel to $e_1$ and $e_2$. If $\PP_i = \dseg{p^i}{q^i}$ for $i=1, \dots, 4$, then we set $\DD_1:=\dseg{p^2}{p^1}$ to be the ``diagonal side'' of $I$ which is parallel to $e_1-e_2$, and we define $\DD_i$, for $i=2, 3, 4$, analogously to be the other ``diagonal sides'' of $I,$ labeled clockwise. If $P_i$ and $D_i$ are the side lengths of the smallest octagon $A\in\mathcal{W}$ which contains $A_I$, in the following we say that $P_i$ (resp. $D_i$) is the length of $\PP_i$ (resp. $\DD_i$). Moreover, we define the upper diagonal of $\DD_1$ by \begin{equation}\label{def:upper_diagonal}\DD_1^+:=\dseg{p^2-\e e_2}{p^1-\e e_1},
    \end{equation} and similarly we define $\DD^+_i$ for $i=2, 3, 4.$
    
    Finally, we say that $\I\subset\e\ZZ^2$ is a {\it rectangle} if $A_\I\subset\rr^2$ is a rectangle.
\end{definition}

\begin{definition}\label{def:quasi_octagon}
    [Quasi-octagonal shape] 
    A staircase set $I\subset\e\ZZ^2$ is a {\it quasi-octagon} if either it is an octagon, or if there exists an octagon $\tilde{I}$ such that $I$ can be written as \begin{equation}\label{eq:quasi_octagon_decomposition}
    I := \displaystyle\tilde{\I}\cup\bigl(\cup_{i=1}^4\PP_i\bigl),
    \end{equation}
    where $\PP_i = \dseg{p^i}{q^i}$ are the external slices of $I$ as in Definition~\eqref{eq:parallel_sides_and_slices}. Denoting by $\tilde{\PP}_i =\dseg{\tilde{p}^i}{\tilde{q}^i}$ and $\tilde{\DD}_i$ the sides of $\tilde{I}$, we require moreover that the following properties hold true:    %\PP_1\subset\tilde{\PP}_1-\e e_2,\;\PP_2\subset\tilde{\PP}_2-\e e_1,\;\PP_3\subset\tilde{\PP}_3+\e e_2,\;\PP_4\subset\tilde{\PP}_4+\e e_1
        
    \begin{equation}\label{eq:quasi_octagon_parallel_sides}
    \begin{aligned} \PP_1\subset\dseg{\tilde{p}^1+\e e_1-\e e_2}{\tilde{q}^1-\e e_1-\e e_2},\\
        \PP_2\subset\dseg{\tilde{p}^2-\e e_1+\e e_2}{\tilde{q}^2-\e e_1-\e e_2},\\
        \PP_3\subset\dseg{\tilde{p}^3+\e e_1+\e e_2}{\tilde{q}^3-\e e_1+\e e_2},\\
        \PP_4\subset\dseg{\tilde{p}^4+\e e_1+\e e_2}{\tilde{q}^4+\e e_1-\e e_2},
    \end{aligned}
    \end{equation}
    see Figure \ref{fig:quasi_oct}.

    Finally, in case $\tilde{p}^1_1<p^1_1$ and $q^1_1<\tilde{q}^1_1$, we set respectively 
\begin{equation}
\label{eq: quasi_oct sides}
    \begin{split}
        \PP_1^{-}:=\dseg{\tilde{p}^1-\e e_2}{p^1-\e e_1},\;\PP_1^{
        +}:=\dseg{q^1+\e e_1}{\tilde{q}^1-\e e_2}
        %,\\
        %\PP_2^{-}:=\dseg{\tilde{p}^2+\e e_1}{p^2-\e e_2},\;\PP_2^{+}:=\dseg{q^2+\e e_2}{\tilde{q}^2+\e e_1},\\
        %\PP_3^{-}:=\dseg{\tilde{p}^3-\e e_2}{p^3-\e e_1},\;\PP_3^{+}:=\dseg{q^3+\e e_1}{\tilde{q}^3-\e e_2},\\
        %\PP_4^{-}:=\dseg{\tilde{p}^4-\e e_1}{p^4-\e e_2},\;\PP_4^{+}:=\dseg{q^4+\e e_2}{\tilde{q}^4-\e e_1},
    \end{split}
    \end{equation}
  and we define $\PP_i^\pm$ in the same way for $i=2, 3, 4$. When we need to specify the time step $j$, we write $\PP^{\pm}_{j,i}$.
In the following we denote by  $\tilde{\DD}_i^+$ the upper diagonals of $\tilde{I}$ as in equation \eqref{def:upper_diagonal}.
\begin{figure}[H]
    \centering
    \resizebox{0.40\textwidth}{!}{\input{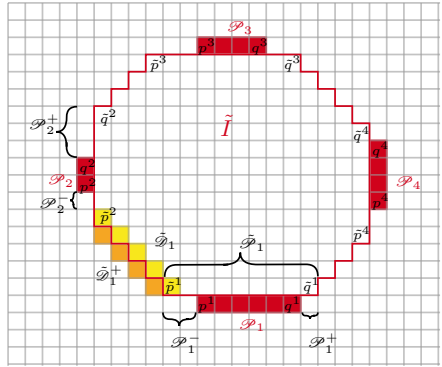}}
    \caption{An example of quasi-octagon $I=\tilde{I}\cup(\cup_{i=1}^{4}\mathscr{P}_i)$; we represent in yellow  the diagonal $\tilde{\DD}_1$, in orange the upper diagonal $\tilde{\DD}_1^+$ and with colored red squares the segments $\PP_i$ for $i=1,\dots, 4$.  }
    \label{fig:quasi_oct}
\end{figure}

\end{definition}
    
    \begin{remark}
        We observe that, for instance, a rectangle is a (degenerate) octagon, i.e. in particular it is a quasi-octagon that cannot be written as in \eqref{eq:quasi_octagon_decomposition}. However, for simplicity, when referring to a quasi-octagon we will always use the decomposition \eqref{eq:quasi_octagon_decomposition} with the convention that $\tilde{I}$ is an octagon, but the external slices $\tilde{\PP}_i$ do not necessarily satisfy \eqref{eq:quasi_octagon_parallel_sides}.
    \end{remark}

\begin{remark}\label{rem:geometry_quasi_octagon}
    Let $\I\subset\e\ZZ^2$ be a staircase set, define $\PP_i$ as in \eqref{eq:parallel_sides_and_slices}, and set
    \[\tilde{\PP}_1 := H^{2},\tilde{\PP}_2 := V^{2},\tilde{\PP}_3 := H^{n_h-1},\tilde{\PP}_4 := V^{n_v-1}.\]
    Then, $\I$ is an octagon if and only if for every pair $p,\,q\in\partial^-\I$ such that $q=p+\e e_i$ for $i=1$ or $i=2$, there exists $j\in\{1, \dots, 4\}$ such that $\{p, q\}\subset\PP_j$. Instead, $\I$ is a quasi-octagon if and only if for every pair $\{p, q\}$ as above there exists $j\in\{1, \dots, 4\}$ such that either $\{p, q\}\subset\PP_j$ or $\{p, q\}\subset\tilde{\PP}_j.$
\end{remark}

\subsubsection{Connection and inclusion of the minimizer} \label{subsub_H} 
We consider the minimizing movement as in  \eqref{def:minimizing_movement}. From now on, in the rest of this article, we will systematically omit the dependence on $\e$ when no confusion occurs; for instance we write $I_j$ in place of $I^\e_j$, and $(u_j)_j\subset\A_\e$ in place of $(u^\e_j)_j\subset\A_\e$. We work with sets $I_j$ having some additional geometric properties that plays the same role of a ``regularity'' assumption in the continuum setting. Namely, we say that the family $(I^\e)_\e$ satisfies assumption \eqref{H} if
\begin{equation}\tag{H}\label{H}
	\begin{minipage}{0.9\textwidth}
		there exist constants $\overline{R}$ and $\overline{c}$ such that $(I^\varepsilon)_\varepsilon$ are staircase sets, all contained in a ball of radius $\overline{R}$, and such that
		\[
		\min\{\#\PP_i^\e : i=1,\dots,4\} \ge \frac{\overline{c}}{\e}.
		\]
	\end{minipage}
\end{equation}
In \eqref{H} we indicated by $\PP_i^\e$ the external slices of $I^\e$, as in \eqref{eq:parallel_sides_and_slices}.

In what follows we say that $u_j$ satisfies \eqref{H} if $(I^\e_j)_\e$ satisfies \eqref{H}. Under this assumption, in \cite[Proposition 3.12]{CFS} we prove that $\I^\e_{j+1}$ is a connected staircase set contained in $\I^\e_j$ for every $\e$ sufficiently small. For reader's convenience, we provide below the statement of the proposition.

\begin{proposition}[\cite{CFS}, Proposition 3.12]

\label{prop:connectedness}
	Let $\gamma>0$ and let $\mu\in(0, 1/4)$. Let $u_0,u_1\in\A_\e$ be such that $u_0$ verifies \eqref{H} and $u_1$ is a minimizer of $\enF(\cdot, u_0)$. Then there exists $\bar{\e}$ such that the set $\I_1$ is a connected staircase set contained in $\I_0$ for every $\e\le\bar{\e}$, and moreover it holds $d_\hh(\partial A_1, \partial A_0)\le\e^\mu$.
\end{proposition}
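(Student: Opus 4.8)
The plan is to follow the by-now-standard scheme for discrete minimizing movements of Almgren--Taylor--Wang type (as in \cite{BGN} and in \cite{CFS}): first extract a priori energy and dissipation bounds by testing minimality against the trivial competitor $u_0$, then correct the geometry of $I_1$ through a chain of local competitors, and finally read off the Hausdorff estimate. For the a priori bounds, testing the minimality of $u_1$ with $u=u_0$ gives $\mathcal E_\e(u_1)+\tfrac1\tau\,\dis_{\e,\gamma}(u_1,u_0)\le\mathcal E_\e(u_0)$. Since $\tau=\zeta\e$ and, using \eqref{H} together with the standing bounds (in particular $\e\,\#Z_{u_0}\le C$), one has $\mathcal E_\e(u_0)\le C$ uniformly in $\e$, this yields $\dis^1_\e(I_1,I_0)\le C\e$, $\e^\gamma\dis^0_\e(Z_1,Z_0)\le C\e$ and $\mathcal E_\e(u_1)\le C$; in particular $Per(A_1)\le C$, because every edge of $\partial I_1$ contributes at least $(1-k)\e$ to $\mathcal E_\e$, and $A_1\triangle A_0$ has measure $\le C\e$, so that $I_1$ is finite and differs from $I_0$ by at most $C/\e$ cells.

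Next I would prove that $I_1\subseteq I_0$ is a connected staircase set by ruling out each failure with a competitor $\tilde u$ obtained from $u_1$ by reassigning the values $1$ or $-1$ on a controlled set of sites --- never altering $Z_{u_1}$, so that the $\e^\gamma\dis^0_\e$-term is untouched --- and showing that $\enF(\tilde u,u_0)<\enF(u_1,u_0)$, contradicting the minimality of $u_1$. For the inclusion one compares $u_1$ with the indicator of $I_1\cap I_0$: since $A_{I_0}$ is convex, cutting $A_1$ with it does not increase the crystalline perimeter (hence, after the bookkeeping of the few surfactant-incident edges, does not increase $\mathcal E_\e$), while $\dis^1_\e(I_1\cap I_0,I_0)<\dis^1_\e(I_1,I_0)$ strictly as soon as $I_1\setminus I_0\ne\emptyset$. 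For connectedness one uses that the bound $\dis^1_\e(I_1,I_0)\le C\e$ already forces $A_1$ to fill all but a thin neighbourhood of $\partial A_0$, so every connected component other than the main one is either disjoint from $I_0$ --- and can be erased, lowering both $\mathcal E_\e$ and $\dis^1_\e$ --- or confined to a small pocket that can be absorbed into $I_1\cap I_0$, again without raising $\enF$. For horizontal and vertical convexity one compares $u_1$ with the smallest staircase set $\tilde I_1$ with $I_1\subseteq\tilde I_1\subseteq I_0$, obtained by alternately filling horizontal and vertical gaps (this stays inside $I_0$ precisely because $I_0$ is itself a staircase set); since $I_1$ is connected this operation creates no new occupied row or column, so $Per(A_{\tilde I_1})\le Per(A_1)$, and a count of the boundary and surfactant edges gives $\mathcal E_\e(\tilde u_1)\le\mathcal E_\e(u_1)$, whereas $\dis^1_\e(\tilde I_1,I_0)<\dis^1_\e(I_1,I_0)$ whenever $I_1\subsetneq\tilde I_1$; minimality then forces $I_1=\tilde I_1$.

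For the Hausdorff bound, granting now that $I_1\subseteq I_0$ are connected staircase sets with $Per(A_1)\le C$, I would argue from $\dis^1_\e(I_1,I_0)=\int_{A_0\setminus A_1}d^\e_1(x,\partial A_0)\dx\le C\e$ together with \eqref{H}. By \eqref{H} the ``deep core'' $K_\e:=\{x\in A_0:\ d^\e_1(x,\partial A_0)>\e^\mu\}$ is, for $\e$ small, non-empty, connected, of area bounded below by a positive constant, and contains points at distance bounded below from $\partial A_0$; if $K_\e\not\subseteq A_1$, then $K_\e\subseteq A_0\setminus A_1$ by connectedness, which forces $\dis^1_\e(I_1,I_0)\ge c\gg\e$, a contradiction. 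Hence $K_\e\subseteq A_1$, i.e.\ no point of $\partial A_1$ is at distance more than $\e^\mu$ from $\partial A_0$. Symmetrically, a point $x_0\in\partial A_0$ with $d^\e_1(x_0,\partial A_1)>\e^\mu$ would, moving inward from $x_0$ along the inner normal of the side of $A_0$ through $x_0$ --- of length $\ge\overline c$ by \eqref{H} --- produce a point of $\partial A_1$ at distance $\gtrsim\e^\mu$ from $\partial A_0$, contradicting the previous step once $\mu$ is replaced by a slightly larger exponent (which is harmless for $\e$ small). Therefore $d_{\mathcal H}(\partial A_1,\partial A_0)\le\e^\mu$, and in particular $A_1$ and $A_0$ agree outside the $\e^\mu$-collar of $\partial A_0$.

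The main obstacle is not any single inequality but the organization of the competitor step: the modifications have to be arranged so as to stay local and compatible with the finiteness of $u_1$, never raise the crystalline perimeter --- which is why connectedness must be handled before convexity, the example of two isolated cells showing that a mere convexification can increase the perimeter --- and interact harmlessly with the term $\e^\gamma\dis^0_\e$, the only part of $\enF$ that is sensitive to the \emph{number} rather than the position of the $0$-cells. The genuinely delicate case is a surfactant particle occupying a site that a modification would like to overwrite; I expect to dispose of it by showing that such a configuration is itself not optimal, since relocating the particle towards $\partial I_1$ (keeping $\#Z_1$ fixed), or simply removing it when $\gamma>1$, strictly lowers $\enF$. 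Once the order of these reductions is fixed, each step is a finite combinatorial computation on the square lattice of exactly the type already carried out in \cite{BGN, CFS}.
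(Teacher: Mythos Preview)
The paper does not prove this proposition: it is quoted verbatim from the companion paper \cite{CFS} (Proposition~3.12 there) and used as a black box. So there is no ``paper's proof'' to compare against; what follows is an assessment of your sketch on its own.

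Your overall plan---test minimality against $u_0$ to get $\dis^1_\e(I_1,I_0)\le C\e$, then repair the geometry of $I_1$ by local competitors, then extract the Hausdorff estimate---is the standard one and is what \cite{BGN,CFS} do. Two steps, however, do not go through as written.

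\emph{Inclusion $I_1\subset I_0$.} You argue that ``since $A_{I_0}$ is convex, cutting $A_1$ with it does not increase the crystalline perimeter''. But hypothesis \eqref{H} only asks that $I_0$ be a staircase set (connected, horizontally and vertically convex), and such sets are \emph{not} convex in general---see Figure~\ref{fig:staircasedef}, where $A_{I_0}$ has re-entrant corners. For a non-convex $I_0$ the inequality $Per(A_{I_1\cap I_0})\le Per(A_{I_1})$ can fail (take $I_1$ a large square containing an L-shaped $I_0$). The inclusion has to be obtained differently, typically by removing the connected components of $I_1\setminus I_0$ one at a time and using that each such component touches $\partial I_0$ only along coordinate segments, so that erasing it does not raise the edge count while strictly lowering $\dis^1_\e$.

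\emph{Hausdorff bound.} Your key implication ``if $K_\e\not\subseteq A_1$, then $K_\e\subseteq A_0\setminus A_1$ by connectedness'' is circular: $K_\e$ connected and $K_\e\not\subseteq A_1$ only gives $K_\e\cap\partial A_1\neq\emptyset$ \emph{or} $K_\e\cap A_1=\emptyset$, and the first alternative is precisely what you are trying to exclude. The usual argument proceeds slice by slice: once $I_1\subset I_0$ are both staircase sets, each horizontal (resp.\ vertical) slice of $I_0\setminus I_1$ consists of at most two intervals; if one of these has length $\ge\e^\mu/\e$, the strip it generates contributes at least $c\,\bar c\,\e^{2\mu}$ to $\dis^1_\e$ (using the lower bound $\bar c$ on the external slices of $I_0$ from \eqref{H}), contradicting $\dis^1_\e\le C\e$ since $2\mu<1$. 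This is also where the restriction $\mu<1/4$ enters in \cite{CFS}, because one has to iterate the slice argument through the corner regions.

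A smaller point: the a~priori bound $\mathcal E_\e(u_0)\le C$ that you invoke needs $\e\,\#Z_0\le C$, which is not part of \eqref{H} or of the statement; it is an ambient assumption in the paper (via $\e C_\e\to\lambda$) but should be made explicit.
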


To conclude the paragraph, we state below a definition and two technical lemmas which will be used several times in the following.

\begin{definition}
	\label{def: corner unit}
	Let $u\in\A_\e$. We say that $p\in \varepsilon\ZZ^2$ is a {\it surfactant in the corner} if, up to rotation, it holds 
	\[u(p) = u(p+\e e_2) = u(p +\e e_1) = 0,\,\text{ and }u(p-\e e_1) = u(p-\e e_2)\neq 0.\]
	
	\noindent Moreover, given $u_j\in\A_\e,$ we define 
	\[\one_j:=\{p\in\partial^-\I_{j}:\#(\mathcal{N}(p)\cap\I_j) = 2\},\;\;\zero_j:=\{p\in\partial^+\I_j:\#(\mathcal{N}(p)\cap\I_j) = 2\}.\]
\end{definition}

\begin{remark}\label{rem:rectangle}
	Let $I\subset\e\ZZ^2$ be a connected set. There exists $p\in\e\ZZ^2\setminus I$ such that $\#\bigl(\mathcal{N}(p)\cap\I\bigl)\ge 2$ if and only $I$ is not a rectangle.
\end{remark}

\begin{lemma}\label{lemma:shape_optimality}
	{\rm[Shape optimality]} Let $\gamma>0$. Let $u_0,\,u_1\in\A_\e$ be such that $I_0$ and $I_1$ are staircase set, and $u_1$ is a minimizer of $\enF(\cdot, u_0)$. It holds that if $p\in\zero_1\cap I_0$ then $u_1(p) = 0$.
	
	Moreover, denote by $H^{i} = \dseg{p^{i}}{ q^{i}},\,i=1, \dots, n_h$ the horizontal slices of $\I_1$. Assume that there exists $i\ge 2$ such that $p^{i}_1+2\e e_1\le p^{i-1}_1$ (so that, in particular, $\{p^{i}, p^{i}+\e e_1\}\subset\partial^-I_1$). Then (at least) one of the following holds:
	\begin{itemize}
		\item[(i)] $\dseg{p^i-\e e_2}{p^{i-1}-\e e_1}\subset\{u_0\neq 1\}$,
		\item[(ii)] $\dseg{p^i-\e e_2}{p^{i-1}-\e e_1}\subset\Z_1$ and $\mathcal{N}(p^{i-1}-\e e_1-\e e_2)\cap\Z_1 = \{p^{i-1}-\e e_1\}$,
		\item[(iii)] $\I_1\cup\Z_1$ is a rectangle and $\partial^+I_1\subset Z_1$.
	\end{itemize}
	Finally suppose that case (ii) occurs and that $\dseg{p^i-\e e_2}{p^{i-1}-\e e_1}\not\subset\{u_0\neq 1\}$. Then it holds $i=2$.
\end{lemma}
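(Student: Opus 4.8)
\emph{Step 1: the corner statement.} To prove the first assertion I would test the minimality of $u_1$ against a one-point competitor. Since $p\in\zero_1\subset\partial^+I_1$ we have $p\notin I_1$, so it suffices to exclude $u_1(p)=-1$; assuming this, set $\tilde u_1:=u_1(\{p\})\mapsto 1$. By definition of $\zero_1$ exactly two of the four neighbours of $p$ lie in $I_1$, and on the two bonds joining $p$ to them the energy density drops from $2\e$ to $0$, while on each of the other two bonds it is unchanged if the far endpoint carries surfactant and increases by $2\e$ if it equals $-1$; hence $\E_\e(\tilde u_1)-\E_\e(u_1)\le -4\e+4\e=0$. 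Moreover $\tilde u_1$ differs from $u_1$ only at $p$, where no surfactant is created or destroyed, so $\dis^0_\e$ is unchanged; and since $p\in I_0\setminus I_1$ the set $I_\cdot\triangle I_0$ loses exactly the point $p$ while $\partial I_0$ is fixed, so $\dis^1_\e$ decreases by $\e^2 d^\e_1(p,\partial I_0)\ge\e^3$. With $\tau=\tempo\e$ this gives
\[\enF(\tilde u_1,u_0)-\enF(u_1,u_0)\le 0+\frac{1}{\tempo\e}\bigl(-\e^3\bigr)=-\frac{\e^2}{\tempo}<0,\]
contradicting minimality, so $u_1(p)=0$.

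\emph{Step 2: the trichotomy.} Write $D:=\dseg{p^i-\e e_2}{p^{i-1}-\e e_1}$ for the horizontal slice lying one step below the left overhang of $H^i$ over $H^{i-1}$. First I would record the purely geometric facts, which follow from connectedness and horizontal/vertical convexity of $I_1$, that $D\cap I_1=\emptyset$, that $D$ lies entirely below $H^i$ (hence $D$ is a genuine indentation of $A_{I_1}$), and that $p^{i-1}-\e e_1$ has either two neighbours in $I_1$ — in which case $p^{i-1}-\e e_1\in\zero_1$ — or three, the extra one being $p^{i-1}-\e e_1-\e e_2\in I_1$. Then, to prove the trichotomy, I would assume (iii) fails and show (i) and (ii) cannot both fail, the engine being the competitor idea of Step~1: filling a cell $r\in D$ into phase $+1$ fills an indentation, so it does not increase the interfacial bonds between $r$ and $H^i$ and it strictly lowers $\dis^1_\e$ whenever $u_0(r)=1$ (then $r\in I_0\setminus I_1$); the only effect that can spoil the gain is the $\e^\gamma/\tau$-penalty triggered when $u_1(r)=0$, i.e.\ $r\in Z_1$. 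Combining Step~1 applied to the $\zero_1$-points inside $D$ with this balance, and using Remark~\ref{rem:rectangle} to dispose of the fully surrounded configuration (iii), forces $D\subset Z_1$; a further local competitor, relocating one surfactant cell so as to create a cheaper ``surfactant in the corner'' configuration in the sense of Definition~\ref{def: corner unit}, then yields the corner condition $\nn(p^{i-1}-\e e_1-\e e_2)\cap Z_1=\{p^{i-1}-\e e_1\}$, i.e.\ (ii).

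\emph{Step 3: the case $i=2$.} Assume (ii) holds and some $r\in D$ satisfies $u_0(r)=1$, and suppose for contradiction $i\ge3$. Then a slice $H^{i-2}$ lies below $H^{i-1}$, and thanks to the corner condition there is room below and to the left of $p^{i-1}-\e e_1$ to build a competitor that preserves $\#Z_1$: one converts the surfactant line $D$ into phase $+1$ — this enlarges $I_1$ and, since $r\in I_0\setminus I_1$, strictly decreases $\dis^1_\e$ — while recreating the same number of surfactant cells along $H^{i-1}$ or $H^{i-2}$, where each such cell meets a $+1$ neighbour across a bond of cost $\e(1-k)$ rather than $2\e$. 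A direct count of the bonds affected shows that $\E_\e$ also decreases, so $\enF(\cdot,u_0)$ strictly decreases, contradicting minimality; hence $i=2$.

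\emph{Main obstacle.} The delicate part is Steps~2 and~3: one has to keep an exact ledger of how both $\E_\e$ and the surfactant count $\#Z_1$ react to the competitor, and in particular to design the $\#Z_1$-preserving relocation so that the $\dis^1_\e$-gain of order $\e^2 d^\e_1(\cdot,\partial I_0)$ is never overwhelmed by the $\e^\gamma/\tau$-penalty for any $\gamma>0$. The boundary situations — $H^i$ only barely overhanging $H^{i-1}$, $H^{i-2}$ partially underlying $D$, and the degenerate surrounded shape (iii) — are where the bulk of the case distinctions lie.
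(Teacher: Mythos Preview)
The paper does not prove this lemma here; it simply cites \cite[Lemma 3.4]{CFS}. So there is no in-paper proof to compare against, and the question is whether your sketch stands on its own.

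Step~1 is correct and complete.

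Steps~2 and~3 have the right overall strategy (local competitor tests against minimality), but the competitors you describe do not quite do the job. In Step~2, filling a \emph{single} cell $r\in D$ into phase $+1$ is energy-neutral or better only when $r$ has at least two $I_1$-neighbours; but every interior point of $D$ has only the one neighbour $r+\e e_2\in H^i$ in $I_1$ (its right neighbour is again in $D$, and the left and lower ones need not be in $I_1$), so flipping such an $r$ from $-1$ to $+1$ can cost up to $+4\e$ in energy, which the $O(\e^2)$ dissipation gain does not compensate. Hence your claim that the balance ``forces $D\subset Z_1$'' does not follow: Step~1 only pins down the $\zero_1$-points of $D$ that also lie in $I_0$, and interior points of $D$ are generically not in $\zero_1$. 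One needs a multi-cell competitor that fills $D$ (or a contiguous piece of it) in one move and tracks the new boundary bonds created at the left end and along the bottom row $D-\e e_2$; this is also where the trichotomy with (iii) and the special role of $i=2$ genuinely enter, since the bottom row behaves differently depending on whether $H^{i-2}$ exists and how far left it extends.

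In Step~3, ``recreating the same number of surfactant cells along $H^{i-1}$ or $H^{i-2}$'' is ambiguous: those slices lie in $I_1$, so surfactant cannot sit \emph{on} them. Presumably you mean placing surfactant in $\partial^+\tilde I_1$, e.g.\ along $D-\e e_2$, but then you must verify that these cells are outside $\tilde I_1$ and that the bond count is favourable --- which again hinges on the position of $H^{i-2}$ and uses the corner condition from (ii) in a way your sketch does not make explicit. Your ``Main obstacle'' paragraph correctly locates where the work is; the specific competitors still need to be written down and their energy ledgers closed before this is a proof.
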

\begin{proof}
    See \cite[Lemma 3.4]{CFS}.
\end{proof}

\begin{lemma}\label{lemma:surfactant_placement}
	Let $u_0,\,u_1\in\A_\e$ be such that $u_0$ satisfies assumption \eqref{H}, and that $u_1$ is a minimizer of $\enF(\cdot, u_0)$. Then there exists $\bar{\e}$ such that for every $\e\le\bar{\e}$, for every pair of points $p,\,q$ such that $u_1(p) = -1$ and $u_1(q) = 0,$ it holds $\#\bigl(\nn(p)\cap\I_1\bigl)\le\#\bigl(\nn(q)\cap\I_1\bigl).$
\end{lemma}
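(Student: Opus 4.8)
The natural competitor against which to test the minimality of $u_1$ is the configuration $\tilde u_1$ obtained by \emph{swapping} the values of $u_1$ at $p$ and $q$, i.e. $\tilde u_1(p)=0$, $\tilde u_1(q)=-1$ and $\tilde u_1=u_1$ elsewhere. This modification changes neither $I_1=\{u_1=1\}$ nor $\#Z_1$, so $\dis_{\e,\gamma}(\tilde u_1,u_0)=\dis_{\e,\gamma}(u_1,u_0)$ and minimality of $u_1$ forces $\E_\e(\tilde u_1)\ge\E_\e(u_1)$. Recalling that a nearest-neighbour bond costs $0$ between equal non-zero spins, $2\e$ between opposite spins, and $(1-k)\e$ whenever at least one endpoint carries a $0$, and writing $n_+(x):=\#(\mathcal N(x)\cap I_1)$ and $n_0(x):=\#\{x'\in\mathcal N(x):u_1(x')=0\}$, a direct bookkeeping of the eight bonds incident to $p$ and to $q$ gives, when $p$ and $q$ are not nearest neighbours,
\begin{equation*}
0\le\E_\e(\tilde u_1)-\E_\e(u_1)=\e\Bigl(2\bigl(n_+(q)-n_+(p)\bigr)+(1-k)\bigl(n_0(q)-n_0(p)\bigr)\Bigr),
\end{equation*}
while if $q\in\mathcal N(p)$ the same computation yields the inequality with $n_0(q)-n_0(p)$ replaced by $n_0(q)-n_0(p)+1$. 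Since the statement is trivial when $n_+(p)=0$, I may assume $n_+(p)\ge1$.

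I would then extract $n_+(q)\ge n_+(p)$ from this inequality in all cases but one rigid configuration. Suppose $n_+(p)>n_+(q)$; then $n_+(q)-n_+(p)\le-1$, so the inequality forces $(1-k)\bigl(n_0(q)-n_0(p)\bigr)\ge2$, and since $k\in(1/3,1)$ we have $2/(1-k)>3$, whence $n_0(q)-n_0(p)\ge4$, i.e. $n_0(q)=4$ and $n_0(p)=0$. Plugging these back, $-2n_+(p)+4(1-k)\ge0$ gives $n_+(p)\le2(1-k)<4/3$, so $n_+(p)=1$ and $n_+(q)=0$. Moreover $n_0(p)=0$ rules out $q\in\mathcal N(p)$ (which would force $n_0(p)\ge1$), so in this residual configuration $p$ and $q$ are not nearest neighbours and, crucially, $q$ is a surfactant particle \emph{all of whose four neighbours carry surfactant}.

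It therefore remains to show that a minimizer $u_1$ cannot contain a point $q$ with $\mathcal N(q)\subset Z_1$. Here I would use Proposition~\ref{prop:connectedness}: for $\e$ small $I_1$ is a connected staircase set contained in $I_0$ and, since $u_0$ satisfies \eqref{H}, it inherits flat sides $\PP_i$ whose length is bounded below uniformly in $\e$. The point is that, because $k>1/3$, a long strip of surfactant placed along such a flat side is energetically favourable — each interior cell of the strip converts an interface bond of cost $2\e$ into one of cost $(1-k)\e$ while only creating bonds of cost $(1-k)\e$, a net gain of order $(3k-1)\e$ per cell — whereas a ``second layer'' of surfactant, or a surfactant cell all of whose neighbours are surfactant, contributes only bonds of cost $(1-k)\e$ and is strictly wasteful. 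Starting from such a $q$, one relocates the wasteful cells of the surfactant cluster through $q$ so as to prolong a surfactant strip along an uncovered portion of some long side $\PP_i$ — available for $\e$ small by \eqref{H} and the partial wetting bound \eqref{intro:partial_wetting} — without touching $I_1$ and without changing $\#Z_1$, so the dissipation is unaffected; one then checks that $\E_\e$ strictly decreases, contradicting minimality. When $I_1$ has diagonal sides, Lemma~\ref{lemma:shape_optimality} further constrains where surfactant can sit along them and enters the exclusion argument in that regime (for instance by producing, near a diagonal, a site $r$ with $u_1(r)=-1$ and $\#(\mathcal N(r)\cap I_1)=2$ to which a wasteful surfactant cell can be moved at a strict energy gain, or else by reducing to the situation in which the diagonals are completely wet).

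The main obstacle is precisely this exclusion of ``surfactant surrounded by surfactant''. Making the relocation argument rigorous requires a careful choice of which cells of $Z_1$ to move and where, so that the bond count yields a genuine strict decrease regardless of whether the cluster touches $I_1$, together with a combined use of \eqref{H}, Proposition~\ref{prop:connectedness} and \eqref{intro:partial_wetting} to guarantee that enough of $\partial I_1$ is uncovered to receive the relocated surfactant; the sub-case in which $I_1$ is (close to) a rectangle, where no lattice site with two $I_1$-neighbours exists, is the most delicate.
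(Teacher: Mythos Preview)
Your swap competitor and the bond-counting identity are correct, and the reduction to the single residual configuration ($n_+(p)=1$, $n_+(q)=0$, $n_0(q)=4$, $n_0(p)=0$, $p\notin\mathcal N(q)$) is clean. The paper itself does not argue this lemma in detail: it simply cites \cite[Lemma~3.6]{CFS} after using Proposition~\ref{prop:connectedness} to verify that $I_1\subset I_0$ with $d^\e_1(a,\partial I_0)\le\e^\mu$ for $a\in\partial^-I_1$, so there is no self-contained proof here to compare against.

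The gap lies in your handling of the residual case. You invoke the partial-wetting bound~\eqref{intro:partial_wetting}, but this is \emph{not} a hypothesis of the lemma: the statement must hold for arbitrary $\#Z_0$, and indeed Remark~\ref{rem:blu_on_the_boundary} uses the lemma to establish the dichotomy ``$Z_1\subset\partial^+I_1$ or $\partial^+I_1\subset Z_1$'' on which the partial/complete-wetting distinction rests --- so the lemma cannot presuppose either regime. Relatedly, the sub-goal you formulate, that a minimizer cannot contain a point $q$ with $\mathcal N(q)\subset Z_1$, is stronger than needed and plausibly false when $\#Z_1\gg\#\partial^+I_1$ (such a cell is energy-neutral relative to a $-1$ there). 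What must actually be ruled out is the \emph{coexistence} of such a $q$ with an uncovered $p\in\partial^+I_1$ having $n_0(p)=0$. This can still be done by swapping, but with better-chosen cells: a surfactant cell $q'$ with $n_+(q')=0$ and $n_-(q')\ge2$ swapped against $p$ yields a decrease of at least $2k\e$, and $q$ swapped against an uncovered $p'\in\partial^+I_1$ with $n_0(p')\ge1$ yields a decrease of at least $(3k-1)\e$, both by your own formula (neither pair is nearest-neighbour, since $n_0(p)=0$ and $n_0(q)=4$). Showing that at least one of these alternative cells exists uses only the staircase structure of $I_1$ from Proposition~\ref{prop:connectedness} and the finiteness of $Z_1$ --- not~\eqref{intro:partial_wetting} --- but it is where the genuine work remains, and your sketch does not supply it.
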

\begin{proof}
	This lemma is a direct consequence of \cite[Lemma 3.6]{CFS}, observing that in view of Proposition~\ref{prop:connectedness} for every $\mu<1/4$ there exists $\bar{\e}$ such that for every $\e\le\bar{\e}$ it holds that $I_1\subset I_0$ and $d^\e_1(a, \partial I_0)\le\e^\mu$ for every $a\in\partial^-I_1$. 
\end{proof}

\begin{remark}\label{rem:blu_on_the_boundary}
	A first consequence of Lemma~\ref{lemma:surfactant_placement} is that if $u_1$ is a minimizer of $\enF(\cdot, u_0)$ it holds either $\partial^+I_1\subset Z_1$ or $Z_1\subset\partial^+I_1$. In particular if $\#\Z_1\le \#\partial^+\I_1$ then $\Z_1\subset\partial^+\I_1.$ 
	
	A second consequence is that if $\# Z_1 \le\#\zero_1$, then $Z_1\subset \zero_1$. This second property can be deduced from Lemma~\ref{lemma:surfactant_placement} together with the fact that for every $p\in\partial^+I_1$ it holds $\#(\nn(p)\cap I_1)\le 2$ since $I_1$ is a staircase set.
\end{remark}

\section{The partial wetting in the case \texorpdfstring{$\gamma <2$}{gamma<2}}\label{sec:gamma<2}
In \cite[Section~5]{CFS} we analyzed the minimizing-movement scheme under the assumption $\gamma<2$, in the case where at the initial time $j=0$ the set $I_0$ is completely surrounded by surfactant, that is, when the surfactant fully wets $I_0$. In the present section we again assume $\gamma<2$, but we start from an initial configuration at time $j=0$ in which the amount of surfactant is not sufficient to surround $I_0$ (i.e. $\# Z_0 < \# \partial^+ I_0$). We begin by recalling \cite[Lemma~5.1]{CFS}, which shows that for $\gamma<2$ the amount of surfactant (that is, $\# Z^\e_j$) remains constant at each time step $j$.

\begin{lemma}
\label{lemma: surfactant costante}
\textnormal{\cite[Lemma 5.1]{CFS}}
    Let $\gamma<2$. Let $u_0, u_1\in\A_\e$ be such that $u_1$ is a minimizer of $\enF(\cdot, u_0)$. Set $C_\e:=\#\Z_0^\e$. Then, there exists $\overline{\e}>0$ such that for every $\e\leq \overline{\e}$ it holds $\#\Z^\e_1=C_\e$.
\end{lemma}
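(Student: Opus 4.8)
The strategy is a two-sided comparison argument: we rule out both $\#Z_1 < C_\e$ and $\#Z_1 > C_\e$ by producing competitors that strictly lower the total energy $\enF(\cdot, u_0)$. The key quantitative fact driving everything is the scaling: creating or destroying one surfactant particle changes the $\e^\gamma\dis^0_\e$-term by exactly $\e^\gamma/\tau = \e^{\gamma-1}/\zeta$, which for $\gamma<2$ is of order $\e^{\gamma-1} \gg 1$; meanwhile, toggling the value of $u$ at a bounded number of sites changes $\E_\e$ by only $O(\e)$ and changes $\dis^1_\e/\tau$ by $O(\e^2 \cdot d_1^\e(\cdot,\partial I_0)/\tau) = O(\e\, d_1^\e)$, which — using Proposition~\ref{prop:connectedness}, or more precisely the a~priori bound $d_\hh(\partial A_1,\partial A_0)\le\e^\mu$ so that relevant distances are $o(1/\e)$ — stays $o(\e^{\gamma-1})$ as long as we only move things near $\partial I_0$. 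So a single surfactant creation/annihilation carries a dominant, unbeatable cost/gain of order $\e^{\gamma-1}$, which forces the count to be locally rigid.

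First I would suppose $\#Z_1 > C_\e$ and derive a contradiction. The idea is to pick a surfactant particle $p$ of $u_1$ that is ``removable'' — i.e., reassigning $u_1(p)$ to $\pm1$ (matching its neighbors so as not to create a new interface) yields an admissible competitor $\tilde u$. Since $\#Z_1 - C_\e \ge 1$, the dissipation term $\e^\gamma\dis^0_\e(Z_{\tilde u}, Z_0)$ drops by at least $\e^\gamma/\tau$ relative to $\dis^0_\e(Z_1, Z_0)$ (the count moves one step closer to $C_\e$, and $|\#Z_1 - C_\e|$ decreases by $1$ since we may pick $p$ so that the sign of the discrepancy is respected — any removal decreases $\#Z$). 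The cost incurred in $\E_\e$ and in $\dis^1_\e$ by this single-site change is $O(\e) + O(\e\cdot\e^\mu\cdot\e^{-1}) = O(\e^\mu)$, wait—more carefully, $\dis^1_\e$ changes by $\e^2 d_1^\e(p,\partial I_0)$, and since $p$ is within $\e^\mu$ of $\partial A_0$ (when $p$ is adjacent to $I_1\subset I_0$) this is $O(\e^{2}\cdot\e^{\mu-1}/\tau)=O(\e^{\mu})$; both are $o(\e^{\gamma-1})$. Hence $\enF(\tilde u, u_0) < \enF(u_1, u_0)$, contradicting minimality. The one subtlety is guaranteeing that a removable $p$ exists; I would argue that if \emph{no} surfactant site of $u_1$ is removable, the surfactant configuration has a rigid structure (e.g., it forms isolated interfaces everywhere), and then invoke Lemma~\ref{lemma:surfactant_placement} / Remark~\ref{rem:blu_on_the_boundary} to control where $Z_1$ sits relative to $\partial^+ I_1$, extracting a removable particle on the ``outer'' layer.

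For the other direction, suppose $\#Z_1 < C_\e$. Here I would build a competitor by \emph{adding} surfactant particles back. Since $u_0$ satisfies \eqref{H} and has $C_\e$ surfactant sites, and $I_1\subset I_0$ is a staircase set with perimeter comparable to that of $I_0$, there is room on $\partial^+ I_1$ (or just outside $I_1$) to place extra $0$'s: define $\tilde u$ from $u_1$ by setting $u_1(Q)\mapsto 0$ on a suitable set $Q$ of $C_\e - \#Z_1$ sites chosen in $\partial^+I_1$, located within $O(\e^\mu)$ of $\partial A_0$. Adding surfactant on an interface site \emph{lowers} $\E_\e$ (this is exactly the surfactant effect, valid since $k>1/3$), or at worst raises it by $O(\e)$ per site; it changes $\dis^1_\e$ by $0$ (these sites are outside $I_1$, so $I_{\tilde u}=I_1$ is unchanged, assuming we only recolor $-1$ or $+1$ exterior cells, not interior ones); and it \emph{decreases} $\e^\gamma\dis^0_\e$ by $\e^\gamma/\tau$ per added site. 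The net gain is again $\sim \e^{\gamma-1}$ per site versus cost $O(\e)$ per site, so $\enF(\tilde u, u_0) < \enF(u_1, u_0)$ for $\e$ small — contradiction. The main obstacle I anticipate is the bookkeeping in \emph{this} direction: one must ensure enough ``free'' exterior sites near $\partial I_1$ are available to host $C_\e - \#Z_1$ new particles without altering $I_1$ and without the $\dis^1_\e$-cost blowing up; this requires combining the lower bound $\#\partial^+I_1 \gtrsim \overline{c}/\e$ (from \eqref{H} plus $d_\hh(\partial A_1,\partial A_0)\le\e^\mu$, which keeps $I_1$ large) with the a priori bound $C_\e = O(1/\e)$, so that $C_\e \le \#\partial^+ I_1$ for $\e$ small and the new particles all sit in $\partial^+I_1$ within distance $\e^\mu$ of $\partial A_0$ — but this is precisely the kind of estimate already used in Remark~\ref{rem:blu_on_the_boundary} and in \cite{CFS}, so I would lean on those.
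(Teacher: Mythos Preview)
The paper does not prove this lemma; it is quoted from \cite[Lemma 5.1]{CFS}, so there is nothing to compare against and I evaluate your argument on its own. Your two-sided comparison strategy is correct and natural, but as written it has a gap: the claim ``$\e^{\gamma-1}\gg 1$'' is false for $\gamma\in(1,2)$ (you need only $\e^{\gamma-1}\gg\e$), and more seriously your bound $\tfrac{1}{\tau}\bigl|\dis^1_\e(\tilde u,u_0)-\dis^1_\e(u_1,u_0)\bigr|=O(\e^\mu)$ with $\mu<1/4$ is $o(\e^{\gamma-1})$ only when $\gamma-1<\mu$, i.e.\ $\gamma<1+\mu<5/4$, so the argument as written fails for $\gamma\in[5/4,2)$.

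The fix is that $\dis^1_\e$ depends only on $I_u=\{u=1\}$, so if you always toggle between the values $0$ and $-1$ (never involving phase $1$), then $I_{\tilde u}=I_{u_1}$ and $\dis^1_\e$ is \emph{exactly} unchanged. The energy change from a single-site toggle $0\leftrightarrow-1$ is at most $4\e(1+k)$ (each incident bond changes by at most $\e(1+k)$, the worst case being $(0,1)\leftrightarrow(-1,1)$), while the $\dis^0_\e$-term moves by $\e^{\gamma-1}/\zeta$ in the favorable direction; since $\gamma<2$ gives $\e^{\gamma-1}\gg\e$, a single toggle already yields a strict decrease of $\enF$. This makes your appeals to Proposition~\ref{prop:connectedness}, to assumption~\eqref{H}, to $C_\e=O(1/\e)$, and to ``room on $\partial^+I_1$'' all unnecessary --- which is just as well, since the lemma does not assume~\eqref{H}, and a single added particle (placed at any site where $u_1=-1$) already suffices for the contradiction.
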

We are going to show that if at time step $j$ the surfactant has not yet surrounded $I_j$, then the next minimizer $I_{j+1}$ is a quasi-octagon. 

Given $I_j$, in order to describe the shape of the set $I_{j+1}$ we use the following definition.
\begin{definition}\label{def:sides_movement}
	Let $I_j$ and $I_{j+1}$ be quasi-octagons such that $\I_{j+1}\subset\I_j$, and let $A_j$ and $A_{j+1}$ be the smallest octagons (in the sense of Definition~\ref{def:wulff_shape}) containing $A_{\I_j}$ and $A_{\I_{j+1}}$ respectively. For $s=j,\,j+1$, denote by $\{\PP_{s, i}: i=1,\dots,4\}$ and $\{\DD_{s, i}:\:i=1, \dots, 4\}$ the parallel and the diagonal sides of $A_s$.
	We denote by $a_{j,i}$ the Euclidean distance between the (parallel) straight lines containing the sides $\PP_{j,i}$ and $\PP_{j+1,i}$, and by $b_{j,i}$ the Euclidean distance between the (parallel) straight lines containing the sides $\DD_{j,i}$ and $\DD_{j+1,i}$. We define $\alpha_{j,i}, \beta_{j,i} \in \NN\cup\{0\}$ as 
    	\begin{equation}
    		\begin{split}
    			\label{eq: spostamenti}
    			&\alpha_{j,i}=\frac{a_{j,i}}{\e},\ \quad i=1,\dots,4\\
    			&\beta_{j,i}=\frac{\sqrt{2}b_{j,i}}{\e},\ \quad i=1,\dots,4.
    		\end{split}
    	\end{equation}
    	For $i=1, \dots, 4$ and $s= j, j+1$, we denote by $P_{s, i}$ and $D_{s, i}$ the lengths of $\PP_{s,i}$ and $\DD_{s,i}$ respectively, so that it holds
    	\begin{equation}
    		\label{eq: Lung. lati}
    		\begin{split}
    			&P_{j+1,i}=P_{j,i}+2a_{j,i}-\sqrt{2}(b_{j,i}+b_{j,i-1}) = P_{j, i}+2\e\alpha_{j, i}-\e(\beta_{j, i}+\beta_{j, i-1}),\\
    			&D_{j+1,i}=D_{j,i}+2 b_{j,i}-\sqrt{2}(a_{j,i}+a_{j,i+1}) = D_{j,i}+\sqrt{2}\e \beta_{j,i}-\sqrt{2}\e(\alpha_{j,i}+\alpha_{j,i+1}),
    		\end{split}
    	\end{equation}
    	where index $i\in\{1, 2, 3, 4\}$ is understood modulo $4$.
\end{definition}

In what follows, starting from discrete approximations of a Wulff-type octagon, setting $C_\e:=\#Z^\e_0 = \#Z^\e_j$, and assuming a partial wetting regime with \begin{equation}\label{eq:lambda}
    \e C_\e\to\lambda\ge0,
\end{equation} we show that the minimizing-movement scheme preserves a quasi-octagonal geometry as long as complete wetting does not occur. Depending on the relative amount of surfactant and on the geometry of the diagonals, the evolution falls into one of the following three regimes described in the introduction: pinning of the diagonals, surfactant-dependent side velocities, and an intermediate regime characterized by nonlocal averaging effects.
We complement the section with examples showing that, depending on the initial data, the minimizing movements may evolve from stage two to the situation in which the surfactant completely surrounds the octagon (see Example~\ref{ex:degenerate_octagon_evolution}), or may evolve ``back'' to the intermediate stage in which surfactant only covers the diagonal sides (see Example~\ref{ex:surfactant_on_diagonals}).

We start by proving a lemma in which we show that, assuming $\gamma<2$ and $I_j$ not surrounded by surfactant, the minimising set $I_{j+1}$ is a quasi-octagon. We provide an upper bound $c_\alpha$ on the sides displacements (see \eqref{eq:3.5}) which depends only on the length of the sides of the octagon $\tilde{I}_j$. In \eqref{eq:3.6} we show that, as long as $I_j$ is sufficiently large (so that also $c_\alpha$ is bounded), the length of the parallel sides of the quasi-octagon is bounded from below. This last estimate together with Proposition~\ref{prop:connectedness} ensures that, as long as the quasi-octagon $I_j$ is large enough (and $\e$ is sufficiently small) the set $I_{j+1}$ is connected.
\begin{lemma}\label{lemma:5_optimal_shape_not_surrounded}
        Let $\gamma<2$. Let $u_j,\ u_{j+1} \in \mathcal{A}_\e$ such that $u_j$ satisfies assumption \eqref{H} of Subsection \ref{subsub_H} and $u_{j+1}$ is a minimizer for $\enF(\cdot, u_j)$.
         Let $\mu<1/4$ and suppose that $\I_{j} = \tilde{\I}_j\cup(\cup_{i=1}^4\PP_{j, i})$ is a quasi-octagon such that \begin{equation}\label{eq:boundary_not_surrounded}
         \#\partial^+\I_j-8\e^{\mu-1}>C_\e. \end{equation} Then, there exists $\bar{\e}$    such that for every $\e\le\bar{\e}$ the following properties hold true.
         \begin{itemize}
         	\item[(i)] The set $\I_{j+1}$ is a quasi-octagon such that \begin{equation}\label{eq:zero_estimate}
         	    \#\zero_{j+1}\le\max\{\#\zero_j, C_\e\},
         	\end{equation}
            and if $\#\zero_{j+1}>C_\e$ then it holds $\zero_{j+1}\subset\zero_j.$
         	\item[(ii)] Setting $\tilde{P}_{min}:=\min\{\tilde{P}_{j, i}:\:i=1, \dots, 4\}$, it holds
         	\begin{equation}\label{eq:3.5}
         		\max\{\alpha_{j, i}:\:i=1, \dots, 4\}\le \Bigl\lfloor \frac{4\tempo}{\tilde{P}_{min}}\Bigl\rfloor+2 =:c_\alpha;
         	\end{equation}
         	\item[(iii)] It holds		       \begin{equation}\label{eq:3.6}
         		\min\{P_{j+1, i}:\:i=1, \dots, 4\}\ge \frac{(1-k)\tempo}{c_\alpha+1}.
            \end{equation}
            \item[(iv)] Suppose that $\min\{\tilde{D}_{j, i}:\:i=1, \dots, 4\}\ge\bar{c}$ (the same constant of \eqref{H}). Then there exists a constant $c_\beta$ depending only on $\bar{c}$ such that 
            \begin{equation*}
            %\label{eq:3.8}
                \max\{\beta_{j, i}:\:i=1, \dots, 4\}\le c_\beta.
            \end{equation*}
            In particular we deduce that under these assumptions there exists a constant $c$ depending only on $\bar{c}$ such that $d_\hh(\partial A_j, \partial A_{j+1})\le c\e.$
         \end{itemize}
\end{lemma}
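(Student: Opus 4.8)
The plan is to combine the structural results recalled above --- Proposition~\ref{prop:connectedness}, Lemma~\ref{lemma: surfactant costante}, the shape-optimality Lemma~\ref{lemma:shape_optimality}, the characterisation of quasi-octagons in Remark~\ref{rem:geometry_quasi_octagon} and the surfactant placement in Remark~\ref{rem:blu_on_the_boundary} --- with two families of barrier (competitor) estimates. All competitors will be obtained from $u_{j+1}$ by altering its values on a single lattice layer along one side, or in a small region near one corner; this keeps $\I\subseteq\I_j$ and, after relocating surfactant if needed, the mass constraint $\#\Z=C_\e$, so that minimality of $u_{j+1}$ translates into an inequality balancing the marginal change of $\E_\e$ against the marginal change of $\frac1\tau\dis^1_\e$.

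For (i), note first that by Proposition~\ref{prop:connectedness} and Lemma~\ref{lemma: surfactant costante}, for $\e$ small $\I_{j+1}$ is a connected staircase set with $\I_{j+1}\subseteq\I_j$, $d_\hh(\partial A_{j+1},\partial A_j)\le\e^\mu$ and $\#\Z_{j+1}=C_\e$; since the two boundaries are $\e^\mu$-close, $\#\partial^+\I_{j+1}\ge\#\partial^+\I_j-8\e^{\mu-1}$, so \eqref{eq:boundary_not_surrounded} gives $\#\partial^+\I_{j+1}>C_\e=\#\Z_{j+1}$, and Remark~\ref{rem:blu_on_the_boundary} then forces $\Z_{j+1}\subseteq\partial^+\I_{j+1}$, as well as $\Z_{j+1}\subseteq\zero_{j+1}$ whenever $\#\zero_{j+1}\ge C_\e$; in particular alternative $(iii)$ of Lemma~\ref{lemma:shape_optimality} (which would require $\partial^+\I_{j+1}\subseteq\Z_{j+1}$) is ruled out. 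To get the quasi-octagonal structure one uses Remark~\ref{rem:geometry_quasi_octagon}: it suffices to show that every ``large step'' of the staircase set $\I_{j+1}$ --- two consecutive horizontal or vertical slices indented by at least two cells --- occurs at the transition between an external slice $\PP_{j+1,i}$ and the adjacent one. For such a step, Lemma~\ref{lemma:shape_optimality} (applied with $u_0\leftarrow u_j$, $u_1\leftarrow u_{j+1}$, and in each of the four rotations) leaves only the options that the intervening diagonal segment is contained in $\{u_j\ne1\}$ or in $\Z_{j+1}$: in the first case the same large step already belongs to the quasi-octagon $\I_j$, hence by the Hausdorff closeness it sits at a transition slice of $\I_{j+1}$ too; in the second case, if that segment is not also contained in $\{u_j\ne1\}$, the final assertion of Lemma~\ref{lemma:shape_optimality} forces the step to be the second slice from the relevant extreme, i.e.\ again a transition. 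For \eqref{eq:zero_estimate} one classifies the notch points of $\zero_{j+1}$ by the same trichotomy: a notch not inherited from $\I_j$ lies in $\partial^+\I_{j+1}\cap\I_j$, hence by the first statement of Lemma~\ref{lemma:shape_optimality} it carries surfactant, so there are at most $C_\e$ of them, while every inherited notch belongs to $\zero_j$; if moreover $\#\zero_{j+1}>C_\e$ then an inherited notch is available near any surfactant-carrying one, and re-aligning the latter with the former strictly lowers $\dis^1_\e$ without raising $\E_\e$, so $u_{j+1}$ has none and $\zero_{j+1}\subseteq\zero_j$.

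For (ii)--(iv), estimate \eqref{eq:3.5} is trivial when $\alpha_{j,i}=0$; otherwise compare $u_{j+1}$ with the configuration obtained by re-adding the deepest layer removed along $\PP_{j+1,i}$, i.e.\ by pushing the $i$-th parallel side one lattice unit outwards. That layer consists of about $\tilde P_{j,i}/\e$ cells, each at $d^\e_1$-distance $\alpha_{j,i}\e$ from $\partial A_j$, so re-adding it lowers $\frac1\tau\dis^1_\e$ by roughly $\frac{\e}{\tempo}\tilde P_{j,i}\alpha_{j,i}$ while raising $\E_\e$ by at most $4\e$ (one parallel side shortens by $2\e$, two diagonals lengthen by $\sqrt2\e$ each); minimality of $u_{j+1}$ then gives $\frac{\e}{\tempo}\tilde P_{j,i}\alpha_{j,i}\le 4\e$, hence $\alpha_{j,i}\le 4\tempo/\tilde P_{j,i}\le 4\tempo/\tilde P_{min}$, which, absorbing the integrality and the $o(1)$ errors into the $+2$, is \eqref{eq:3.5}. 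The same competitor applied to a diagonal side yields, when $\DD_{j,i}$ is non-degenerate, $\beta_{j,i}\le 2\sqrt2(1-k)\tempo/D_{j,i}$, and iterating it controls $\e\beta_{j,i}$ also for short or degenerate diagonals; feeding these bounds and \eqref{eq:3.5} into \eqref{eq: Lung. lati} shows that a parallel side can be shortened only by retracting its two adjacent diagonals, whose retraction is itself limited, whence \eqref{eq:3.6}. For (iv), under $\min_i\tilde D_{j,i}\ge\bar c$ the diagonal estimate becomes $\beta_{j,i}\le 2\sqrt2(1-k)\tempo/\bar c=:c_\beta$, depending only on $\bar c$; since \eqref{H} together with \eqref{eq:quasi_octagon_parallel_sides} gives $\tilde P_{min}\ge\bar c$, also $c_\alpha\le\lfloor 4\tempo/\bar c\rfloor+2$, and Definition~\ref{def:sides_movement} then yields $d_\hh(\partial A_j,\partial A_{j+1})\le\e\max\{c_\alpha,c_\beta\}=:c\e$ with $c=c(\bar c)$.

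The main obstacle is expected to be Part~(i): turning the analytic dichotomy supplied by Lemma~\ref{lemma:shape_optimality} into the purely combinatorial statement that all the defects of $\I_{j+1}$ lie on the four parallel sides, while simultaneously keeping exact track of where the $C_\e$ surfactant cells sit along $\partial^+\I_{j+1}$. This will require treating the four corners by symmetry, dealing with degenerate octagons, and excluding the accumulation of several large steps at a single transition; obtaining the precise lower bound in \eqref{eq:3.6} (rather than a crude bound through $\bar c$) will likewise hinge on the sharp form of the corner competitor.
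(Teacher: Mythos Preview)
Your plan for parts (i), (ii) and (iv) is essentially the paper's: the same structural lemmas, the same outward-layer competitor along a parallel side to bound $\alpha_{j,i}$, and the analogous competitor along a diagonal to bound $\beta_{j,i}$ when $\tilde D_{j,i}\ge\bar c$. The one point where your outline diverges from the paper is in the role you assign to the case analysis of Lemma~\ref{lemma:shape_optimality} for the quasi-octagon property: the paper does not try to match each ``large step'' of $\I_{j+1}$ with one of $\I_j$ via Hausdorff closeness, but observes directly that for a step occurring at slice index $i\ge 3$ the segment $S$ cannot lie in $\{u_j\neq 1\}$ (since $\I_j$ is a quasi-octagon), so case~(i) is excluded outright and the last sentence of Lemma~\ref{lemma:shape_optimality} forces $i=2$. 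This is cleaner than tracking steps through the Hausdorff distance.

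The genuine gap is your treatment of (iii). You propose to obtain the lower bound on $P_{j+1,i}$ by first bounding $\beta_{j,i}$ and then plugging into \eqref{eq: Lung. lati}. But the $\beta$-bound you sketch, $\beta_{j,i}\lesssim \tempo/D_{j,i}$, degenerates when $D_{j,i}$ is small, and (iii) is stated \emph{without} the hypothesis $\tilde D_{j,i}\ge\bar c$ of (iv); your remark about ``iterating'' to control $\e\beta_{j,i}$ for short diagonals does not yield a uniform positive lower bound on $P_{j+1,i}$. The paper instead uses a different competitor, dual to the one in (ii): it removes the extreme slice $\PP_{j+1,i}$ \emph{inward} and shifts the surfactant cells sitting in $\PP_{j+1,i}^{\pm}$ onto the vacated positions (see Figure~\ref{fig:2Figur_Lemma3_3}). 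The point is that this relocation moves a surfactant cell from a two-$I$-neighbour site to a one-$I$-neighbour site, so the energy \emph{decreases} by $\e(1-k)$, while the dissipation increases by at most $P_{j+1,i}(\alpha_{j,i}+1)\e/\tempo\le P_{j+1,i}(c_\alpha+1)\e/\tempo$. Minimality then gives $P_{j+1,i}\ge(1-k)\tempo/(c_\alpha+1)$ directly, with no reference to $\beta$ or to the diagonal lengths. This is precisely the ``sharp corner competitor'' you allude to at the end, but the mechanism (and the fact that it must move inward, not outward) is the missing ingredient in your plan.
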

\begin{proof}
We divide the proof in several steps.\\

    {\it First step.} We prove that as long as \eqref{eq:boundary_not_surrounded} holds, we have $\Z_{j+1}\subset\partial^+\I_{j+1}$. From Proposition~\ref{prop:connectedness}, we have that there exists $\bar{\e}$ such that for every $\e\le\bar{\e}$ it holds
\[ d_{\mathcal{H}}(\partial A_j, \partial A_{j+1}) \le \varepsilon^\mu,\] therefore there exists an octagon $\I'_{j+1}\subset I_{j+1}$ whose distance from the boundary of $\I_j$ is bigger than $\e^\mu$. More precisely, we set $\alpha'_i = \e^{\mu-1}$ and $\beta'_i = 2\e^{\mu-1},$ and we define $\I'_{j+1}$ through the relations \eqref{eq: spostamenti} and \eqref{eq: Lung. lati}. We observe that $\I_{j+1}'\subset \I_{j+1}$ and \begin{equation}
\label{controllo_cardinality_boundary_I_primo}
\#\partial^+\I_{j+1}'\le\#\partial^+\I_{j+1}.
\end{equation}
Moreover a direct computation shows that 
    \begin{equation}
    \label{cadinality_difference}
        \#\partial^+\I_{j+1}'-\#\partial^+\I_j = -\sum_{i=1}^4\beta_i'.
    \end{equation}
    By \eqref{cadinality_difference} and \eqref{controllo_cardinality_boundary_I_primo}, we conclude that $\#\partial^+\I_{j+1} \ge \#\partial^+\I_j-\sum_{i=1}^42\e^{\mu-1}>C_\e,$ where last inequality follows from \eqref{eq:boundary_not_surrounded}. Hence, from Lemma~\ref{lemma:surfactant_placement} and Remark~\ref{rem:blu_on_the_boundary} we deduce that $\Z_{j+1}\subset\partial^+\I_{j+1},$ which completes the claim.\\
    
    {\it Second step.} Now we prove that $I_{j+1}$ is a quasi-octagon.
    Suppose first that $I_{j+1}\cup Z_{j+1}$ is a rectangle. Recalling that $Z_{j+1}\subset\partial^+I_{j+1}$, it follows as a direct geometric consequence that $I_{j+1}$ is a quasi-octagon. 
    
    We may therefore suppose that $I_{j+1}\cup Z_{j+1}$ is not a rectangle.
    With the notation of Remark~\ref{rem:geometry_quasi_octagon}, in order to complete the proof we need to show that for every $p, q\in\partial^-\I_{j+1}$ such that $q=p+\e e_i$ for $i=1, 2$ there exists $s\in\{1, \dots, 4\}$ such that either $\{p, q\}\subset\PP_{j+1,s}$ or $\{p, q\}\subset \tilde{\PP}_{j+1,s}$. For $i=1, \dots, n_h$ we denote by $H^{j+1, i} = \dseg{p^{j+1, i}}{q^{j+1,i}}$
    the horizontal slices of $\I_{j+1}$. We argue by contradiction supposing, without loss of generality, that there exist $p, q$ and $3\le i\le n_h-2$ such that $q = p+\e e_1$, and $\{p, q\}\subset H^{j+1, i}\cap\partial^-\I_{j+1}$ with $q_1<p^{j+1,i-1}_1$. Consider therefore the segment $S:=\dseg{p^{j+1, i}-\e e_2}{p^{j+1, i-1}-\e e_1}\subset\{u_{j+1}\neq 1\}.$ Since $i\ge 3$ and since $\I_j$ is a quasi-octagon, it can not hold that $S\subset\{u_j\neq 1\}$. Since we are now assuming that $\I_{j+1}\cup\Z_{j+1}$ is not a rectangle, we deduce that situation $(ii)$ of Lemma~\ref{lemma:shape_optimality} occurs. In view of the final part of that lemma we have that $i=2$, which completes the claim.
    
In order to complete the proof of $(i)$ we have to prove \eqref{eq:zero_estimate}. Suppose by contradiction that $\#\zero_{j+1}>\#\zero_j$ and $\#\zero_{j+1}>C_\e.$ Since $\#\zero_{j+1}>\#\zero_j$ we deduce that there exists $p\in \zero_{j+1}\setminus\zero_j$, and therefore, in particular, $p\in I_j.$ In view of the first part of Lemma~\ref{lemma:shape_optimality} it holds $u_{j+1}(p) = 0$. Moreover, since $C_\e = \#Z_{j+1}<\#\zero_{j+1}$ there exists $q\in\zero_{j+1}\setminus Z_{j+1}$. Again in view of the first part of Lemma~\ref{lemma:shape_optimality} we deduce that $u_{j}(q) = -1$. We consider the competitor $\tilde{u}_{j+1}$ obtained by replacing $u_{j+1}(p)\mapsto 1$ and $u_{j+1}(q)\mapsto 0$. We observe that $\E_\e(\tilde{u}_{j+1})\le\E_\e(u_{j+1})$ and $\dis^1_\e(\tilde{u}_{j+1}, u_j)<\dis^1_\e(u_{j+1}, u_j)$, therefore $\enF(\tilde{u}_{j+1})<\enF(u_{j+1}),$ which contradicts the minimality of $u_{j+1}$ and completes the claim. The argument above shows also that if $\#\zero_{j+1}>C_\e$, then $\zero_{j+1}\subset\zero_j$, which completes the proof of $(i)$.\\

    {\it Third step.} We prove $(ii)$. 
    We observe that since $d_\hh(\partial A_{j}, A_{j+1})\le\e^\mu$, and since $I_{j+1}$ is a quasi-octagon there is a geometrical constant $c>0$ such that $|\tilde{P}_{j, i} - \tilde{P}_{j+1, i}|\le c\e^\mu$. We prove \eqref{eq:3.5} for $\alpha_{j, 1}$, which, we recall, is the displacement of the parallel side $\PP_{j,1}$. We denote by $\tilde{\PP}_{j+1, 1}:=\dseg{\tilde{p}^{j+1, 1}}{\tilde{q}^{j+1, 1}}$ the first horizontal slice of the octagon $\tilde{I}_{j+1}$ and by $\PP_{j+1, 1}:=\dseg{p^{j+1, 1}}{q^{j+1, 1}},$ the first horizontal slice of the quasi-octagon $I_{j+1}$, and we set 
    \[S:=\dseg{\tilde{p}^{j+1, 1}-\e e_2}{p^{j+1, 1}-\e e_1}\cup \dseg{q^{j+1, 1}+\e e_1}{\tilde{q}^{j+1, 1}-\e e_2}\cup \dseg{p^{j+1, 1}-\e e_2}{q^{j+1, 1}-\e e_2}.\]
    For simplicity we suppose that $S\subset I_j,$ since this does not change the strategy of the proof.
    We consider the competitor $\tilde{u}_{j+1}$ defined by (see Figure \ref{fig:1Figur_Lemma_3_3})
    \begin{equation*}
    	\tilde{u}_{j+1}(p):=\begin{cases}
    		1&\text{ if }p\in S,\\
    		u_{j+1}(p+\e e_2)&\text{ if }p\in S-\e e_2,\\
    		u_{j+1}(p)&\text{ otherwise}.
    	\end{cases}
    \end{equation*}
    
    \begin{figure}[H]
    \centering
    \resizebox{0.70\textwidth}{!}{\input{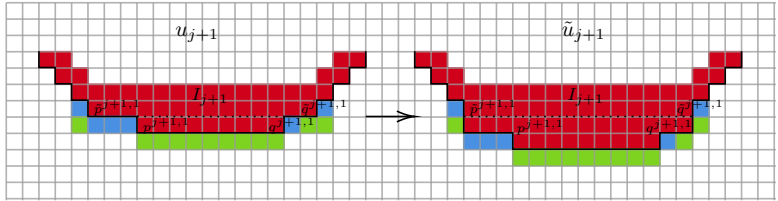}}
    \caption{On the right, the competitor $\tilde{u}_{j+1}$ introduced to determine the upper bound $c_\alpha$ for the side displacement $\alpha_{j,1}$.}
    \label{fig:1Figur_Lemma_3_3}
    \end{figure}
    It holds $\E_\e(\tilde{u}_{j+1})\le \E_\e(u_{j+1})+4\e$, and 
    \[\frac{1}{\tau}\dis^1_\e(\tilde{u}_{j+1},u_j) - \frac{1}{\tau}\dis^1_\e(u_{j+1},u_j)\le -\frac{\tilde{P}_{j+1, 1}(\alpha_{j, 1}-1)\e}{\tempo}\le-\frac{(\tilde{P}_{min}-c\e^\mu)(\alpha_{j, 1}-1)\e}{\tempo},\]
    where $\tilde{P}_{min}:=\min\{\tilde{P}_{j, i}:\:i=1, \dots, 4\}$.
    By minimality of $u_{j+1}$ we deduce that 
    \[4\e\ge \frac{(\tilde{P}_{min}-c\e^\mu)(\alpha_{j, 1}-1)\e}{\tempo}\implies \alpha_{j, 1}\le \frac{4\tempo }{\tilde{P}_{min}-c\e^\mu}+1,\]
    and \eqref{eq:3.5} follows for every $\e$ sufficiently small.\\

    {\it Fourth step.} We prove $(iii)$. More precisely, we prove \eqref{eq:3.6} for the parallel side $\PP_{j+1, 1}$. For simplicity we may suppose that $(\PP_{j+1, 1}-\e e_2)\subset\{u_{j+1} = -1\}$ and that $\dseg{\tilde{p}^{j+1, 1}-\e e_2}{p^{j+1, 1}-\e e_1}\cup \dseg{q^{j+1, 1}+\e e_1}{\tilde{q}^{j+1, 1}-\e e_2}\subset Z_{j+1}$ since the other cases are analogous. Set $n:=\# \dseg{q^{j+1, 1}+\e e_1}{\tilde{q}^{j+1, 1}-\e e_2}$ and $\bar{p}:=p^{j+1, 1}+(n-1)\e e_1$. We define a competitor 
    \begin{equation*}
    	\tilde{u}_{j+1}(p)= \begin{cases}
    		0&\text{ if }p\in \dseg{p^{j+1, 1}}{\bar{p}},\\
    		-1&\text{ if }p\in\dseg{\bar{p}+\e e_1}{\tilde{q}^{j+1, 1}-\e e_2},\\
    		u_{j+1}(p)&\text{ otherwise},
    	\end{cases}
    \end{equation*}
    see Figure \ref{fig:2Figur_Lemma3_3}.
    
    \begin{figure}[H]
    \centering
    \resizebox{0.70\textwidth}{!}{\input{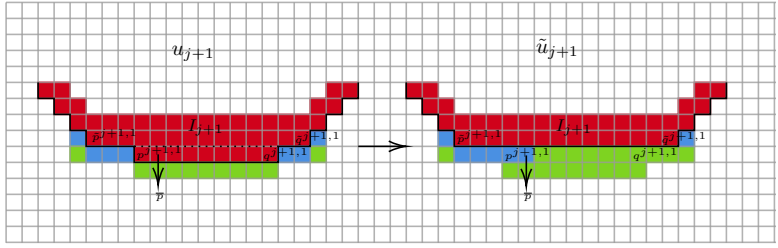}}
    \caption{On the right, the competitor $\tilde{u}_{j+1}$ introduced to determine the lower bound for the lengths of the sides (of $I_{j+1}$) which are parallel to the coordinate axes.}
    \label{fig:2Figur_Lemma3_3}
    \end{figure}
    In this case we have that $\E_\e(\tilde{u}_{j+1})-\E_\e(u_{j+1})\le-\e(1-k)$, and by \eqref{eq:3.5}
    \[\frac{1}{\tau}\dis^1_\e(\tilde{u}_{j+1},u_j) - \frac{1}{\tau}\dis^1_\e(u_{j+1},u_j)\le \frac{P_{j+1, 1}(\alpha_{j, 1}+1)\e}{\tempo}\le  \frac{P_{j+1, 1}(c_\alpha+1)\e}{\tempo}.\]
    By minimality of $u_{j+1}$ we deduce that 
    \[\e(1-k)\le \frac{P_{j+1, 1}(c_\alpha+1)\e}{\tempo}\implies P_{j+1, 1}\ge \frac{\tempo(1-k)}{c_\alpha+1},\]
    which is \eqref{eq:3.6}.

    We omit the proof of $(iv)$, which follows the same strategy as the proof of $(iii)$. The idea is to define a competitor $\tilde{u}_{j+1}$ by adding a layer of phase $1$  in place of $\DD_{j+1, 1}^+$, and by moving properly the surfactant cells which were lying in $\DD_{j+1, 1}^+$, in such a way that $\E_\e(\tilde{u}_{j+1})\le\E_\e(u_{j+1})+c\e$, for a constant $c$ depending only on $k$. After that, we compare $\enF(\tilde{u}_{j+1}, u_j)<\enF(u_{j+1}, u_j)$, and we conclude similarly as before. For more details, we refer to the proof of Proposition~\ref{prop:5.21}, where we show \eqref{eq:sides_movement_quasi_octagon_2} for the displacement $\beta_1.$
\end{proof}

\begin{remark}\label{rem:3.4}
    Let $I$ be a quasi-octagon, and denote by $D_i$ the lengths of the diagonals of the smallest octagon containing $I$. Then it holds that
    \[\#\zero_I = \som\frac{D_{i}}{\sqrt{2}\e},\]
    where $\zero_I$ is the set of points defined according to Definition~\ref{def: corner unit}, relative to $I$. In the following we will use expressions like ``the surfactant covers the diagonal sides'' meaning that $\zero_j\subset Z_j$.
\end{remark}

\subsection{Stage one: ``pinning of diagonals''.} 
From now on, until the beginning of Subsection \ref{subsec:negligible}, we assume $\lambda>0$, where $\lambda$ was defined in \eqref{eq:lambda}. In the next proposition we discuss the sides displacements when ``the surfactant is not enough to cover the diagonals''. More precisely, we show that the side displacements are given by \eqref{eq:movement_parallel_sides_not_covered} provided that we know a priori (i.e. knowing only $u_j$) that 
\( C_\e + 2 \le \#\zero_{j+1} \). 
This last inequality guarantees that if \( I_j \) is an octagon, then \( I_{j+1} \) is also an octagon. 
A sufficient condition ensuring that this inequality holds is given in \eqref{eq:6.23}.

\begin{proposition}\label{prop:5.3_beginning_movement}
Let $\gamma<2,\,\lambda>0$ and suppose that \eqref{eq:boundary_not_surrounded} holds.
    Let $u_j\in\A_\e$ be satisfying assumption \eqref{H} of Subsection \ref{subsub_H} and let $u_{j+1}$ be a minimizer of $\enF(\cdot, u_j)$. 
    Suppose that $\I_{j}$ is an octagon such that 
    \begin{equation}\label{eq:6.23}        \sum_{i=1}^4\Biggl(\frac{D_{j, i}}{\sqrt{2}\e}-2c_\alpha\Biggl)\ge C_\e+2,
    \end{equation}
    where $c_\alpha$ is the constant of Lemma~\ref{lemma:5_optimal_shape_not_surrounded}.
    Then there exists $\bar{\e}$ such that for every $\e\le\bar{\e}$ the set $\I_{j+1}$ is a (possibly degenerate) octagon, and the following properties hold true:
    \begin{itemize}
        \item[(i)] $\#\zero_{j+1}\ge C_\e+2$ and
        $\DD_{j+1, i}\subset\DD_{j, i}$ for every $i$ such that $D_{j+1,i}>0$. 
        \item[(ii)] $D_{j+1, i}=  0$ for every $i$ such that $D_{j, i} = 0$.
        \item[(iii)] There exists a constant $c$ depending only on $\bar{c}$ (the constant of \eqref{H}) such that the displacements of the parallel sides are  \begin{equation}\label{eq:movement_parallel_sides_not_covered}
            \alpha_{j,i} \begin{cases}   =\Bigl\lfloor\frac{4\tempo}{P_{j, i}}\Bigl\rfloor&\text{ if }\text{ dist}\bigl(\frac{4\tempo}{P_{j,i}}, \NN\bigl)\ge c\e\\[0.5em]
                \in\Bigl\{\Bigl[\frac{4\tempo}{P_{j,i}}\Bigl]-1,\Bigl[\frac{4\tempo}{P_{j,i}}\Bigl]\Bigl\}&\text{ otherwise.}
            \end{cases}
        \end{equation}
    \end{itemize}
	
\end{proposition}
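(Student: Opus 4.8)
The plan is to use the quasi-octagon structure already established in Lemma~\ref{lemma:5_optimal_shape_not_surrounded} and then argue via carefully chosen competitors that, under the condition \eqref{eq:6.23}, the surfactant cannot reach the diagonal sides and hence the diagonals are pinned. First I would observe that from Lemma~\ref{lemma:5_optimal_shape_not_surrounded}(i) the set $\I_{j+1}$ is a quasi-octagon, and from Lemma~\ref{lemma:5_optimal_shape_not_surrounded}(ii) the parallel-side displacements satisfy $\alpha_{j,i}\le c_\alpha$. Using the relation \eqref{eq: Lung. lati} for the diagonal lengths together with $\beta_{j,i}\ge 0$, we get $D_{j+1,i}\ge D_{j,i}-\sqrt2\,\e(\alpha_{j,i}+\alpha_{j,i+1})\ge D_{j,i}-2\sqrt2\,\e\,c_\alpha$, so by Remark~\ref{rem:3.4} and \eqref{eq:6.23} the number of corner units satisfies $\#\zero_{j+1}=\sum_i D_{j+1,i}/(\sqrt2\e)\ge \sum_i(D_{j,i}/(\sqrt2\e)-2c_\alpha)\ge C_\e+2>C_\e$. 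By Lemma~\ref{lemma:5_optimal_shape_not_surrounded}(i) the strict inequality $\#\zero_{j+1}>C_\e$ forces $\zero_{j+1}\subset\zero_j$; this, combined with the surfactant-placement structure (Remark~\ref{rem:blu_on_the_boundary}: since $\# Z_{j+1}=C_\e<\#\zero_{j+1}$ we get $Z_{j+1}\subset\zero_{j+1}$), shows in particular that the surfactant sits strictly inside the "corners'' of $\I_{j+1}$, never occupying a full diagonal side — the $\#\zero_{j+1}\ge C_\e+2$ "budget'' is exactly what rules out that the diagonals themselves are wetted, which in turn (again via Remark~\ref{rem:geometry_quasi_octagon} and the shape-optimality alternatives of Lemma~\ref{lemma:shape_optimality}) upgrades the quasi-octagon $\I_{j+1}$ to a genuine (possibly degenerate) octagon. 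This proves $(i)$ and $(ii)$, the latter because a diagonal of $\I_j$ of length zero stays a right-angle corner unless surfactant pushes it open, which the budget forbids; more precisely $D_{j+1,i}>0$ would require $\beta_{j,i}>\alpha_{j,i}+\alpha_{j,i+1}\ge 0$, and the energy cost of creating such a new diagonal layer (proportional to $(1-k)\e$ per step of $\beta$) is not compensated by the dissipation gain once $Z_{j+1}$ is already accommodated in the existing corners.

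For part $(iii)$, the plan is the standard two-sided competitor comparison for each parallel side $\PP_{j,i}$, say $i=1$. For the upper bound, I would take $\tilde u_{j+1}$ obtained from $u_{j+1}$ by adding one more horizontal layer of phase $1$ on top of $\PP_{j+1,1}$ (retracting the displacement by one, i.e. replacing $\alpha_{j,1}$ by $\alpha_{j,1}-1$) and suitably relocating the boundary labels; since by $(i)$ the side is a true parallel side of an octagon, flanked by pinned diagonals, $\E_\e$ changes by at most $O(\e)$ while $\dis^1_\e/\tau$ decreases by roughly $P_{j,1}(\alpha_{j,1}-1)\e/\tempo$ minus a small correction from the moving diagonal tips; minimality yields $\alpha_{j,1}\le 4\tempo/P_{j,1}+O(\e)$, hence $\alpha_{j,1}\le\lfloor 4\tempo/P_{j,1}\rfloor$ when $4\tempo/P_{j,1}$ is $c\e$-far from $\NN$. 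For the lower bound, I would take $\tilde u_{j+1}$ removing one layer (replacing $\alpha_{j,1}$ by $\alpha_{j,1}+1$), paying $\dis^1_\e$ of order $P_{j,1}(\alpha_{j,1}+1)\e/\tempo$ while saving $\E_\e$ of order $(1-k)\e$ per removed cell: minimality gives $\alpha_{j,1}\ge 4\tempo/P_{j,1}-O(\e)$, i.e. $\alpha_{j,1}\ge\lfloor 4\tempo/P_{j,1}\rfloor$ in the generic case. Squeezing the two bounds gives \eqref{eq:movement_parallel_sides_not_covered}; in the non-generic case (distance $<c\e$) only the weaker inclusion in $\{[4\tempo/P_{j,i}]-1,[4\tempo/P_{j,i}]\}$ survives. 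The uniformity of the error constant $c$ in $\bar c$ follows from Lemma~\ref{lemma:5_optimal_shape_not_surrounded}(iv), which bounds $\beta_{j,i}$ and hence $d_\hh(\partial A_j,\partial A_{j+1})\le c\e$, controlling all the "boundary-tip'' corrections uniformly.

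The main obstacle I expect is the bookkeeping around the pinned diagonal tips when the competitor moves a parallel side: adding or removing a layer of $\PP_{j+1,1}$ interacts with the two adjacent diagonals $\DD_{j+1,1},\DD_{j+1,2}$ and with whatever surfactant and phase-$(-1)$ cells sit near those corners, so the energy and dissipation estimates are not quite as clean as for an isolated flat facet. One has to verify that relocating a bounded number of surfactant cells (whose existence and placement is governed by Lemma~\ref{lemma:surfactant_placement} and Remark~\ref{rem:blu_on_the_boundary}) can always be done at cost $O(\e)$, uniformly, and that the combinatorial count of the dissipation $\dis^1_\e$ under the layer move is exactly $P_{j+1,1}(\alpha_{j,1}\mp1)\e$ up to the $O(\e^{1+\mu})$ or $O(\e)$ corrections coming from $|P_{j,1}-P_{j+1,1}|$ and from the diagonal displacements. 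A secondary subtlety is showing rigorously that \eqref{eq:6.23} genuinely forces the octagon structure rather than merely the quasi-octagon one — this is where the strict margin "$+2$'' (as opposed to "$+0$'') in the surfactant budget is essential, since it leaves room for the corner units after accounting for the worst-case displacement, and one must check the alternative $(ii)$ of Lemma~\ref{lemma:shape_optimality} cannot produce a defect slice on a parallel side.
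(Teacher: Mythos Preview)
Your plan follows the paper's approach closely: use Lemma~\ref{lemma:5_optimal_shape_not_surrounded} to get the quasi-octagon structure and $\alpha_{j,i}\le c_\alpha$, deduce $\#\zero_{j+1}\ge C_\e+2$ from \eqref{eq: Lung. lati} and \eqref{eq:6.23}, invoke Lemma~\ref{lemma:5_optimal_shape_not_surrounded}(i) to get $\zero_{j+1}\subset\zero_j$ (hence the octagon structure and pinning (i)--(ii)), and finish with a two-sided competitor comparison for (iii). Two details, however, are off.

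First, the energy bookkeeping for the competitors in (iii) is wrong. In this regime $Z_{j+1}\subset\zero_{j+1}$ and, by Remark~\ref{rem:energy_is_perimeter}, $\E_\e(u_{j+1})=2\,Per(I_{j+1})+\text{const}$; adding or removing one layer of $\PP_{j+1,1}$ changes the perimeter by exactly $\pm 2\e$, so $\E_\e$ changes by exactly $\pm 4\e$, not ``$(1-k)\e$ per removed cell''. The factor $(1-k)$ is the Stage-two coefficient (Proposition~\ref{prop:5.21}), where the side is wetted; here it is not, and it is precisely the $4$ in $\pm 4\e$ that produces $4\tempo/P_{j,i}$ in \eqref{eq:movement_parallel_sides_not_covered}. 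Second, you misattribute the role of the ``$+2$''. It is not used to rule out Lemma~\ref{lemma:shape_optimality}(ii) defects; the octagon structure follows already from $\zero_{j+1}\subset\zero_j$ (a purely geometric consequence, since $I_j$ is an octagon). Rather, the paper uses $\#\zero_{j+1}\ge C_\e+2$ to guarantee that one can choose a minimizer with the same $I_{j+1}$ but with the two corner cells $p^1-\e e_1,\,q^1+\e e_1$ adjacent to $\PP_{j+1,1}$ \emph{surfactant-free}; this is what makes the $\pm 4\e$ energy change exact and decouples the competitor computation from the surfactant placement. Finally, you do not need Lemma~\ref{lemma:5_optimal_shape_not_surrounded}(iv) (whose hypothesis $D_{j,i}\ge\bar c$ may fail here): once (i)--(ii) give pinning of the diagonals, $d_\hh(\partial A_j,\partial A_{j+1})\le c_\alpha\e$ follows from the $\alpha$-bound alone.
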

\begin{proof}
   For simplicity we rename $\alpha_i = \alpha_{j,i}.$
    By Lemma~\ref{lemma:5_optimal_shape_not_surrounded} there exists $\bar{\e}$ such that for every $\e\le\bar{\e}$ the set $\I_{j+1}$ is a quasi-octagon. We recall that $\DD_{j+1, i}$ denotes the diagonal sides of the smallest discrete octagon which contains $\I_{j+1}$ (as in Definition~\ref{def:quasi_octagon}), and $D_{j+1, i}$ denotes the length of $\DD_{j+1, i}.$ By Remark~\ref{rem:3.4}, \eqref{eq: Lung. lati} and \eqref{eq:6.23} we observe that 
    \[\#\zero_{j+1} = \sum_{i=1}^4\frac{D_{j+1, i}}{\sqrt{2}\e} \ge \sum_{i=1}^4\Biggl(\frac{D_{j, i}}{\sqrt{2}\e}-2\alpha_i\Biggl)\ge\sum_{i=1}^4\Biggl(\frac{D_{j,i}}{\sqrt{2}\e}-2c_\alpha\Biggl)\ge C_\e+2.\]
    From Lemma~\ref{lemma:5_optimal_shape_not_surrounded}-(i) it follows that $\zero_{j+1}\subset\zero_j$. Then (i) follows by a direct geometric argument, and (ii) is a direct consequence of (i).\\
    Now we prove \eqref{eq:movement_parallel_sides_not_covered} for the side displacement $\alpha_1.$ We set $\PP_{j+1, 1} = \dseg{p^1}{q^1}$. For simplicity we may suppose that $\{p^1-\e e_1, q^1+\e e_1\}\subset\zero_{j+1}$. Consider two disjoint sets $S_0$ and $S_{-1}$ such that $\zero_{j+1} = S_0\cup S_{-1}$, and that $\# S_0 = C_\e$. Note that until now both of them can contain points where $u_{j+1}=-1$ or $u_{j+1}=0$. We observe that the competitor $\tilde{u}_{j+1}$ obtained by replacing $u_{j+1}(S_0)\mapsto 0$ and $u_{j+1}(S_{-1})\mapsto -1$ is still a minimizer of $\enF(u_{j+1})$ such that $\{\tilde{u}_{j+1}=1\} = I_{j+1}$.
    In particular since $\#\zero_{j+1}\ge C_\e+2$, we deduce that for every $S_0\subset\zero_{j+1}\setminus \{p^1-\e e_1, q^1+\e e_1\}$ there exists a minimizer $\tilde{u}_{j+1}$ such that $\{\tilde{u}_{j+1} = 0\} = S_0$ and $\{\tilde{u}_{j+1}=1\} = I_{j+1},$ which therefore satisfies $\E_\e(\tilde{u}_{j+1}) = \E_\e(u_{j+1})$, and $\tilde{u}_{j+1}(p^1-\e e_1) = u_{j+1}(q^1+\e e_1)=-1$. Suppose first that $\alpha_1\ge 1$. We build the competitors $\tilde{u}_{j+1}^+,\,\tilde{u}^-_{j+1}$ by setting
    \begin{align*}
        &\tilde{u}_{j+1}^+(p) := \begin{cases}
            1&\text{ if }p\in(\PP_{j+1, 1}-\e e_2)\setminus\{p^1-\e e_2,q^1-\e e_2\},\\
            \tilde{u}_{j+1}(p)&\text{ otherwise},
        \end{cases}\\
        &\tilde{u}_{j+1}^-(p) := \begin{cases}
            -1&\text{ if }p\in\PP_{j+1, 1},\\
            \tilde{u}_{j+1}(p)&\text{ otherwise.}
        \end{cases}
    \end{align*}
    We have $\E_\e(\tilde{u}_{j+1}^+) = \E_\e(\tilde{u}_{j+1})+4\e$ and $\E_\e(\tilde{u}_{j+1}^-) = \E_\e(\tilde{u}_{j+1})-4\e$. We observe that from (i), (ii) and from Lemma~\ref{lemma:5_optimal_shape_not_surrounded}-(ii) it holds $d_\hh(\partial A_0, \partial A_1)\le c_\alpha\e$. Therefore, we can estimate the variation of dissipation as follows.
    \begin{align*}
    	&\frac{1}{\tau}\Bigl(\dis^1_\e(\tilde{u}_{j+1}^+, u_j)-\dis^1_\e(\tilde{u}_{j+1}, u_j)\Bigl)\le -\frac{(P_{j, 1}-2c_\alpha\e)\alpha_{j, 1}\e}{\tempo},\\
    	&\frac{1}{\tau}\Bigl(\dis^1_\e(\tilde{u}_{j+1}^-, u_j)-\dis^1_\e(\tilde{u}_{j+1}, u_j)\Bigl)\le \frac{(P_{j, 1}+2c_\alpha\e)(\alpha_{j, 1}+1)\e}{\tempo}.
    \end{align*}
    In particular, since by minimality of $u_{j+1}$ we have $\enF(\tilde{u}^\pm_{j+1},u_{j})\ge\enF(u_{j+1},u_{j})$, we find that 
    \begin{equation}\label{eq:3.13}
        \frac{(P_{j, 1}-2c_\alpha\e)\alpha_{j, 1}\e}{\tempo}\le4\e\le \frac{(P_{j, 1}+2c_\alpha\e)(\alpha_{j, 1}+1)\e}{\tempo}
    \end{equation}
    and \eqref{eq:movement_parallel_sides_not_covered} follows. In case $\alpha_1 = 0$ then since $\enF(\tilde{u}_{j+1}^-,u_j)\ge\enF(u_{j+1},u_j)$ we deduce only the right inequality of \eqref{eq:3.13}. From this inequality, using $\alpha_{1} = 0$, it follows that 
    \[\frac{4\tempo}{P_{j, 1}-2c_\alpha\e} = \frac{4\tempo}{P_{j, 1}}+\frac{8\tempo c_\alpha\e}{P_{j, 1}(P_{j, 1}-2c_\alpha\e)}\le 1\implies \frac{4\tempo}{P_{j, 1}}<1,\]
    and therefore \eqref{eq:movement_parallel_sides_not_covered} holds even in this case.
\end{proof}

\begin{remark}\label{rem:a_priori_estimate_beginning}
    We point out that \eqref{eq:6.23} was used only to ensure that $\#\zero_{j+1}\ge C_\e+2,$ which is a sufficient condition to have (i)-(ii)-(iii).
\end{remark}

\begin{remark}\label{rem:energy_is_perimeter}
    We observe that if $u_{j+1}$ is as in Proposition~\ref{prop:5.3_beginning_movement}, then 
    \[\E_\e(u_{j+1}) = 2Per(I_{j+1})+4\e(1-k)C_\e-4\e C_\e.\]
    Since for $\e$ fixed the quantity $4\e(1-k)C_\e-4\e C_\e$ is constant, it follows that, as long as the diagonal sides of $u_j$ are sufficiently long, i.e. as long as all the surfactant can be contained in $\cup_{i=1}^4\DD_i^+$, then $u_{j+1}$ is also a minimizer for
 $$\tilde{\mathcal{F}}^{\gamma, \tau}_\e(u_{j+1}) := 2Per(\I_{j+1})+\frac{1}{\tau}\dis^1_\e(u_{j+1}, u_j),$$ which is the same functional considered in \cite{BGN} (up to the multiplicative factor $2$ in front of the perimeter energy).
\end{remark}

\subsection{Stage two: ``surfactant dependent side velocities''.}
In this section we discuss the case in which the surfactant completely covers the sufficiently long diagonal sides of $I_j$, and there are some cells of surfactant adjacent to the parallel sides. Moreover, we suppose that all the diagonal sides are sufficiently long.

The following lemma concerns some technical properties of minimizers.

\begin{lemma}\label{lemma:surfactant placement_around_quasi_octagon} Let $\gamma<2$ and $\lambda>0$.
Let $u_j\in\A_\e$ and let $u_{j+1}$ be a minimizer of $\enF(\cdot, u_j)$. Suppose that $I_{j+1}$ is a quasi-octagon such that $\#\zero_{j+1}+2 \le C_\e < \#\partial^+I_{j+1}.$ Denote by $\PP_{j+1, i} = \dseg{p^{j+1, i}}{q^{j+1, i}}$ and $\tilde{\PP}_{j+1, i} = \dseg{\tilde{p}^{j+1, i}}{\tilde{q}^{j+1, i}}$ the parallel sides of $I_{j+1}$ and $\tilde{I}_{j+1}$ respectively. Set $\PP^\pm_{j+1, i}$ as in Definition~\ref{def:quasi_octagon}. Then the following properties hold true. 
\begin{itemize}
    \item[(i)] If $\PP_{j+1, 1}-\e e_2\cap Z_{j+1}\neq\emptyset$ then there exist two points $p, q$ such that $\PP_{j+1, 1}-\e e_2\cap Z_{j+1}:=\dseg{p}{q}$, i.e. the cells of surfactant below the first horizontal slice form an horizontal segment. Similarly, also $\PP^\pm_{j+1, 1}\cap Z_{j+1}$ are horizontal segments.\\
    Moreover, there exists $p\in Z_{j+1}\setminus\zero_{j+1}$ such that $\#\nn(p)\cap Z_{j+1} = 1.$
    \item[(ii)] There exists $u\in\A_\e$ such that, setting $I_u := \{u=1\}$, $\PP_{i} := \dseg{p^i, q^i}{}$ for $i=1, \dots, 4$ and $\PP_{i}^\pm$ as in Definition~\ref{def:quasi_octagon}, it holds that $I_u$ is a quasi-octagon with $\#\partial^+I_{j+1} = \#\partial^+I_u$ and we have the following:
    \[I_{j+1}\setminus\{p^{j+1, 1}, q^{j+1, 1}\}\subset I_u\subset I_{j+1},\;\;\dseg{p^1-2\e e_1}{p^1-\e e_1}\cup \dseg{q^1+\e e_1}{q^1+2\e e_1}\subset\{u = 0\},\]
    (in particular $\#\PP_{1}^\pm\ge 2$), $\E_\e(u) = \E_\e(u_{j+1}),$ $\#\{u = 0\} = C_\e$ and 
    $|\dis^1_\e(u, u_j)-\dis^1_\e(u_{j+1}, u_j)|\le c\e^3$ for some positive constant, depending only on $\bar{c}$ (the constant of \eqref{H}).
\end{itemize}
\end{lemma}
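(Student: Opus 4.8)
Both claims concern only the optimal placement of the $C_\e$ surfactant cells once the jump set $I_{j+1}$ is the fixed quasi-octagon, so the plan is to reduce everything to the minimisation of $\E_\e$ over surfactant configurations supported on $\e\ZZ^2\setminus I_{j+1}$. First I would record the facts common to both parts. Since $\#Z_{j+1}=C_\e<\#\partial^+I_{j+1}$, Remark~\ref{rem:blu_on_the_boundary} gives $Z_{j+1}\subset\partial^+I_{j+1}$, so every surfactant cell has one or two neighbours in $I_{j+1}$, and precisely the cells of $\zero_{j+1}$ have two. Counting bonds (a bond costs $0$ between two same-sign nonzero cells, $2$ between opposite signs, and $1-k$ as soon as it touches a surfactant cell), one obtains for every $u$ with $\{u=1\}=I_{j+1}$, $\#\{u=0\}=C_\e$ and $\{u=0\}\subset\partial^+I_{j+1}$,
\[
\E_\e(u)=2\,Per(I_{j+1})+4(1-k)\e C_\e-2\e\,b_0(u)-(1-k)\e\,x(u),
\]
where $b_0(u)$ is the number of bonds between $I_{j+1}$ and $\{u=0\}$ and $x(u)$ the number of bonds inside $\{u=0\}$ (this specialises to Remark~\ref{rem:energy_is_perimeter} when all surfactant sits on the diagonals). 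Since $\dis_{\e,\gamma}(u,u_j)$ is insensitive to permuting the labels $\{0,-1\}$ on $\e\ZZ^2\setminus I_{j+1}$ at fixed number of $0$'s (the $\dis^1_\e$ term depends only on $I_{j+1}$, the $\dis^0_\e$ term only on $\#\{u=0\}=C_\e=\#Z_j$), minimality of $u_{j+1}$ forces it to maximise $2b_0+(1-k)x$ among such configurations.

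For part (i): an exterior cell of the staircase set $I_{j+1}$ has at most two neighbours in $I_{j+1}$ and exactly $\#\zero_{j+1}$ have two, so $b_0\le C_\e+\#\zero_{j+1}$. Using $1-k<1$, moving a one-neighbour surfactant cell into a vacant two-neighbour slot raises $b_0$ by $1$ and lowers $x$ by at most $2$, hence raises $2b_0+(1-k)x$ by at least $2k>0$; therefore any minimiser satisfies $\zero_{j+1}\subset Z_{j+1}$, $b_0(u_{j+1})=C_\e+\#\zero_{j+1}$, and the remaining $C_\e-\#\zero_{j+1}\ge2$ surfactant cells lie at one-neighbour positions. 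Among configurations with $\zero_{j+1}$ saturated, $x$ is then maximised, and since two-neighbour cells are pairwise non-adjacent this forces the extra surfactant into maximal runs along the flat portions of $\partial^+I_{j+1}$ (a run with a gap could absorb a neighbouring cell, strictly raising $x$). Specialising to the flat line below $\PP_{j+1,1}$ and to $\PP^\pm_{j+1,1}$ gives the claimed segment structure; and, since there are at least two extra surfactant cells, an endpoint of one of their maximal runs gives a point $p\in Z_{j+1}\setminus\zero_{j+1}$ with $\#(\nn(p)\cap Z_{j+1})=1$, using that along a flat portion a one-neighbour cell has at most two complement-side neighbours, both on the same line.

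For part (ii): I would build $u$ by a local surgery near the two ends of $\PP_{j+1,1}$, the right end being symmetric to the left. If $\#\PP^-_{j+1,1}\ge2$, keep $I_u=I_{j+1}$ there and redistribute the extra surfactant to an $x$-maximal configuration occupying the second cell of $\PP^-_{j+1,1}$ adjacent to the concave corner; this is possible because each maximal run of extra cells can be made to touch a distinct concave corner of the various $\PP^\pm_i$ (of which assumption~\eqref{H} provides enough, of sufficient length), and then $x$ is unchanged — each run trades one internal bond for one corner bond — so $\E_\e(u)=\E_\e(u_{j+1})$ by the displayed formula and $\dis^1_\e$ is literally unchanged. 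If $\#\PP^-_{j+1,1}=1$, instead remove the convex corner $p^{j+1,1}$ from $I_{j+1}$: it is a convex corner adjacent to the concave corner ending $\PP^-_{j+1,1}$, so both $Per$ and $\#\zero$ are unchanged, whence $\#\partial^+I_u=\#\partial^+I_{j+1}$ and $I_u$ is again a quasi-octagon whose $\PP^-_1$ now has length $2$; saturating its concave corners and occupying the two required cells can be arranged (a short bond count using $\zero_{j+1}\subset Z_{j+1}$) with the same values of $b_0$ and $x$ as for $u_{j+1}$, hence $\E_\e(u)=\E_\e(u_{j+1})$ once more. Finally $I_u\subset I_{j+1}\subset I_j$ and $I_{j+1}\setminus I_u$ consists of at most two cells, each at $d^\e_1$-distance $\le c\e$ from $\partial I_j$ by Lemma~\ref{lemma:5_optimal_shape_not_surrounded}(ii)--(iv), so $|\dis^1_\e(u,u_j)-\dis^1_\e(u_{j+1},u_j)|\le 2\e^2\cdot c\e=c'\e^3$.

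The main obstacle I expect is the exact (not merely asymptotic) bookkeeping in part (ii): certifying that the corner-removal alternative preserves $\E_\e$ and $\#\partial^+$ on the nose, which rests on the observation that the endpoint of a parallel side is a convex corner next to a concave one, together with a small case analysis of how short $\PP^\pm_{j+1,1}$ can be and which $-1$ cells are promoted to surfactant. A secondary subtlety, in part (i), is to run the two exchange arguments in the right order — first saturate $\zero_{j+1}$ (legitimate because $2>2(1-k)$), then cluster the remaining surfactant — so that no trade-off between the $b_0$- and $x$-terms can occur.
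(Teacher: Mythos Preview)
Your proposal is essentially correct and follows the same skeleton as the paper: part~(i) by local swap/translation arguments, and part~(ii) by the same two-case distinction (either $\#\PP^-_{j+1,1}\ge2$, where one keeps $I_{j+1}$ and relocates surfactant, or $\#\PP^-_{j+1,1}=1$, where one deletes the convex corner $p^{j+1,1}$ from $I_{j+1}$ and turns it into surfactant). The dissipation bookkeeping at the end is also identical.

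The one substantive difference is your global accounting via the formula $\E_\e(u)=2\,Per(I_{j+1})+4(1-k)\e C_\e-2\e b_0-(1-k)\e x$, which is correct and gives a clean way to see why the swaps preserve energy. The paper instead computes each swap by hand. Your device is conceptually nicer, but in part~(ii) it leads you to a heavier construction than necessary: you propose redistributing \emph{all} runs of extra surfactant so that each abuts a distinct concave corner, and then appealing to \eqref{H} for enough room. The paper's argument is much lighter. In Case~1 it simply takes a single point $p\in Z_{j+1}\setminus\zero_{j+1}$ with $\#(\nn(p)\cap Z_{j+1})=1$ (supplied by your part~(i)), sends $u_{j+1}(p)\mapsto-1$ and $u_{j+1}(p^{j+1,1}-2\e e_1)\mapsto0$; in your language this is $\Delta b_0=-1+1=0$, $\Delta x=-1+1=0$, hence $\E_\e$ is preserved on the nose and no global rearrangement or appeal to \eqref{H} is needed. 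In Case~2 the paper likewise swaps a single cell (choosing $p=p^{j+1,1}-\e e_2$ if that cell is already surfactant, to avoid spoiling the bond count). Your ``each run trades one internal bond for one corner bond'' is right when each run touches exactly one corner, but you can bypass that discussion entirely with the single-cell swap.
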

\begin{proof}
In view of Remark~\ref{rem:blu_on_the_boundary} we have $\Z_{j+1}\subsetneq\partial^+I_{j+1}$. The first part of claim (i) follows directly by the minimality of $u_{j+1}$. Indeed suppose for instance, by contradiction, that $\PP_{j+1, 1}-\e e_2\cap Z_{j+1} = \dseg{p^1}{q^1}\cup \dseg{p^2}{q^2}$ is the disjoint union of two segments. Then we could translate horizontally the first segment until it is adjacent to the second segment, strictly reducing the energy of $u_{j+1}$ and therefore contradicting its minimality. We can argue in the same way to prove that $\PP_{j+1, 1}^\pm\cap Z_{j+1}$ is an horizontal segment. 

We prove the second part of claim (i). Consider any point $p\in Z_{j+1}\setminus\zero_{j+1}.$ If $p\in\PP_{j+1, 1}-\e e_2$ then from the discussion above we know that there exist two points $\bar{p},\bar{q}$ belonging to $Z_{j+1}$ such that $p\in\dseg{\bar{p}}{\bar{q}}
\subset Z_{j+1}$. In particular $\#\nn(\bar{p})\cap Z_{j+1} = 1$, which completes the claim. We argue similarly if $p$ is adjacent to any of the other sides $\PP_{j+1, i}$ for $i=2,\dots, 4$, and also if $p\in\cup_{i=1}^4\PP_{j+1, i}^\pm$. This completes the proof.

Now we prove claim (ii). In view of Remark~\ref{rem:blu_on_the_boundary} we have $\zero_{j+1}\subset Z_{j+1}.$ By the minimality of $u_{j+1}$ one can show that $\tilde{p}^{j+1, 1}_1<p^{j+1, 1}_1<q^{j+1, 1}_1<\tilde{q}^{j+1, 1}_1$ so that, in particular, $p^{j+1, 1}-\e e_1$ and $q^{j+1, 1}+\e e_1$ belong to $\zero_{j+1}$. There are now two cases to consider.\\

{\it Case 1)} Suppose that $p^{j+1, 1}-2\e e_1\in\partial^+I_{j+1}\setminus Z_{j+1}$ and $q^{j+1, 1}+2\e e_1\in\partial^+I_{j+1}\setminus Z_{j+1}$. From (i) there exists $p\in Z_{j+1}\setminus \zero_{j+1}$ such that $\#\nn(p)\cap Z_{j+1} = 1$. We consider the competitor $u'$ obtained by replacing $u_{j+1}(p)\mapsto -1$ and $u_{j+1}(p^{j+1, 1}-2\e e_1)\mapsto 0$. We have that $\E_\e(u') = \E_\e(u_{j+1})$ (and clearly the dissipation has not changed). In particular $u'$ is a minimizer of $\enF(\cdot, u_{j}).$ Since $C_\e-2\ge \#\zero_{j+1}$, we deduce that there exists $q\in Z_{j+1}\setminus (\zero_{j+1}\cup\{p^{j+1, 1}-2\e e_1\})$ such that $\#\nn(q)\cap Z_{j+1} = 1$. We define a competitor $u$ by replacing $u'(q)\mapsto -1$ and $u'(q^{j+1, 1}+2\e e_1)\mapsto 0.$ As before we have that $u$ is a minimizer, and it satisfies all the properties of (ii). Therefore, we deduce that if $\#\PP_{j+1, 1}^-\ge 2$ (resp. if $\#\PP_{j+1, 1}^+\ge 2$) then we can find a minimizer $u$ such that $I_u = I_{j+1}$ and $p^{j+1, 1}-2\e e_1 \in\{u=0\}$ (resp. $q^{j+1, 1}+2\e e_1 \in\{u=0\}$), see Figure \ref{fig:Surfactant_placement_quasi_octagon}.

\begin{figure}[H]
    \centering
    \resizebox{0.70\textwidth}{!}{\input{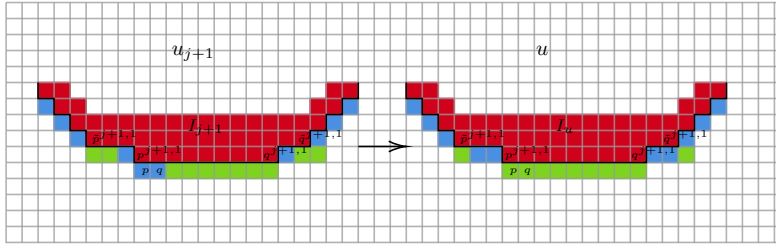}}
    \caption{Case 1). On the right, the minimizer $u$ which preserves energy and dissipation.}
    \label{fig:Surfactant_placement_quasi_octagon}
    \end{figure}

{\it Case 2)}. Suppose that $\#\PP_{j+1, 1}^\pm = 1.$ From (i) there exists $p\in Z_{j+1}\setminus\zero_{j+1}$ such that $\#\nn(p)\cap Z_{j+1} = 1$. If $p^{j+1, 1}-\e e_2\in Z_{j+1}$ we choose $p = p^{j+1, 1}-\e e_2.$ We define a competitor $u'$ by replacing $u_{j+1}(p)\mapsto -1$ and $u_{j+1}(p^{j+1, 1})\mapsto 0$. It holds $\E_\e(u') = \E_\e(u_{j+1})$, and $|\dis^1_\e(u', u_j)-\dis^1_\e(u_{j+1}, u_j)|\le c\e^3$, which is negligible. Moreover, since $C_\e-2\ge\#\zero_{j+1},$ we deduce that there exists $q\in Z_{j+1}\setminus(\zero_{j+1}\cup\{p^{j+1, 1}-\e e_1\})$ such that $\#\nn(p)\cap Z_{j+1} = 1$. If $q^{j+1, 1}-\e e_2\in Z_{j+1}$ we choose $q = q^{j+1, 1}-\e e_2$. We define $u\in\A_\e$ by replacing $u'(q)\mapsto -1$ and $u({q^{j+1, 1}})\mapsto 0.$ The function $u$ verifies all the requirements of (ii), and the proof is complete.
\end{proof}

In the following proposition we show that the side displacements are given by \eqref{eq:sides_movement_quasi_octagon} and  \eqref{eq:sides_movement_quasi_octagon_2} provided that we know a priori that 
\( C_\e \ge \#\zero_{j+1} +2\).
A sufficient condition ensuring that this inequality holds is given by \eqref{eq:5.32}. The values $\amin_{j, i}$ and $\bmax_{j, i}$ in \eqref{eq:5.32} are defined by
\begin{equation}\label{eq:alpha_bar}
\begin{aligned} \amin_{j,i}&:=\begin{cases}
            \Bigl\lfloor\frac{2\tempo(1-k)}{\tilde{P}_{j,i}}\Bigl\rfloor&\text{ if }\text{ dist}\Bigl(\frac{2\tempo(1-k)}{\tilde{P}_{j,i}}, \NN\Bigl)\ge \e^{1/2},\\[.5em]  \Bigl[\frac{2\tempo(1-k)}{\tilde{P}_{j,i}}\Bigl]-1&\text{ otherwise},
        \end{cases}\\[.5em]
        \bmax_{j,i} & :=  \begin{cases}     \Bigl\lfloor\frac{\sqrt{2}\tempo(1+k)}{\tilde{D}_{j,i}}\Bigl\rfloor&\text{ if }\text{dist}\Bigl(\frac{\sqrt{2}\tempo(1+k)}{\tilde{D}_{j,i}}, \NN\Bigl)\ge\e^{1/2}\\[.5em]
        \Bigl[\frac{\sqrt{2}\tempo(1+k)}{\tilde{D}_{j,i}}\Bigl]&\text{otherwise}.
        \end{cases}
    \end{aligned}
\end{equation}
where we recall that $\tilde{D}_{j,i}$ and $\tilde{P}_{j,i}$ are the lengths of the diagonal sides and of the sides parallel to the coordinate axes respectively of
%the smallest
the octagon 
%containing 
$\tilde{I}_j$.
We will show that, under the assumptions of Proposition~\ref{prop:5.21}, it holds $\alpha_{j, i}\ge\amin_{j,i}$ and $\beta_{j, i}\le\bmax_{j,i}$. We point out that we could have replaced the value $\e^{1/2}$ in \eqref{eq:alpha_bar} with $\e^\theta$ for any $\theta\in (0, 1).$

\begin{proposition}\label{prop:5.21}
Let $\lambda>0$, $\gamma<2$, $\mu<1/4$, and let $u_j\in\A_\e$ be such that $\I_j = \tilde{\I}_j\cup(\cup_{i=1}^4\PP_{j, i})$ is a quasi-octagon verifying assumption \eqref{H} of Subsection \ref{subsub_H}, and \eqref{eq:boundary_not_surrounded}. Denote by $D_{j,i}$ the lengths of the diagonal sides of the smallest octagon containing $\I_j$, and suppose that $D_{j, i}\ge\bar{c}$ for every $i=1, \dots, 4$ (where $\bar{c}$ is the constant of \eqref{H}), and that
    \begin{equation}\label{eq:5.32}
        \sum_{i=1}^4\Biggl(\frac{D_{j, i}}{\sqrt{2}\e}+\bmax_{j, i}-2\amin_{j, i}\Biggl)\le C_\e-2,
    \end{equation}
    where $\beta_{j,i}^{max}$ and $\alpha_{j,i}^{min}$ are defined by \eqref{eq:alpha_bar}.
    Let $u_{j+1}$ be a minimizer of $\enF(\cdot, u_j)$. 
    Then there exists $\bar{\e}$ such that for every $\e\le\bar{\e}$ the set $\I_{j+1}$ is a quasi-octagon satisfying $\#\zero_{j+1}\le C_\e-2$. Moreover the side displacements $\alpha_{j, i}$ and $\beta_{j, i}$ satisfy  \begin{align}\label{eq:sides_movement_quasi_octagon}
        &\alpha_{j,i}\begin{cases}
            \in\Bigl\{\Bigl\lfloor\frac{2\tempo(1-k)}{\tilde{P}_{j,i}}\Bigl\rfloor, \Bigl\lceil\frac{2\tempo(1-k)}{\tilde{P}_{j,i}}\Bigl\rceil\Bigl\}&\text{ if }\text{ dist}\Bigl(\frac{2\tempo(1-k)}{\tilde{P}_{j,i}}, \NN\Bigl)\ge c\e,\\[.5em]
            \in\Bigl\{\Bigl[\frac{2\tempo(1-k)}{\tilde{P}_{j,i}}\Bigl]-1,\,\Bigl[\frac{2\tempo(1-k)}{\tilde{P}_{j,i}}\Bigl],\,\Bigl[\frac{2\tempo(1-k)}{\tilde{P}_{j,i}}\Bigl]+1\Bigl\}&\text{ otherwise};
        \end{cases}\\
        \label{eq:sides_movement_quasi_octagon_2}
        &\beta_{j,i}\begin{cases}     =\Bigl\lfloor\frac{\sqrt{2}\tempo(1+k)}{\tilde{D}_{j,i}}\Bigl\rfloor&\text{ if }\text{dist}\Bigl(\frac{\sqrt{2}\tempo(1+k)}{\tilde{D}_{j,i}}, \NN\Bigl)\ge c\e,\\[.5em]
        \in\Bigl\{\Bigl[\frac{\sqrt{2}\tempo(1+k)}{\tilde{D}_{j,i}}\Bigl]-1,\,\Bigl[\frac{\sqrt{2}\tempo(1+k)}{\tilde{D}_{j,i}}\Bigl]\Bigl\}&\text{otherwise,}
        \end{cases}
    \end{align}
    where the constant $c$ depends only on $\bar{c}$.
\end{proposition}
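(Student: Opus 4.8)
The plan is to argue in three phases: (i) reduce to a quasi-octagonal $\I_{j+1}$ with uniformly bounded side displacements; (ii) prove two a priori estimates on the displacements which, \emph{independently of where the surfactant sits}, together with \eqref{eq:5.32} force $\#\zero_{j+1}\le C_\e-2$; (iii) once this inequality is available, normalise the position of the surfactant through Lemma~\ref{lemma:surfactant placement_around_quasi_octagon} and run the precise competitor computations that identify $\alpha_{j,i}$ and $\beta_{j,i}$. For phase (i), since $u_j$ verifies \eqref{H} and \eqref{eq:boundary_not_surrounded} holds, Lemma~\ref{lemma:5_optimal_shape_not_surrounded} provides $\bar\e$ such that for $\e\le\bar\e$ the set $\I_{j+1}$ is a quasi-octagon with $Z_{j+1}\subset\partial^+\I_{j+1}$, hence $C_\e=\# Z_{j+1}<\#\partial^+\I_{j+1}$; moreover, since $D_{j,i}\ge\bar c$ for all $i$, part (iv) of that lemma yields $\alpha_{j,i}\le c_\alpha$, $\beta_{j,i}\le c_\beta$ and $d_\hh(\partial A_j,\partial A_{j+1})\le c\e$, with all constants depending only on $\bar c$.

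For phase (ii) I would show, for an arbitrary minimizer and without hypotheses on the surfactant placement, that $\beta_{j,i}\le\bmax_{j,i}$ and $\alpha_{j,i}\ge\amin_{j,i}$. Fixing $i$, the upper bound on $\beta_{j,i}$ follows by comparing $u_{j+1}$ with the competitor obtained by filling $\tilde{\DD}^+_{j+1,i}$ with phase $1$ and relocating the surfactant that coated it onto the exterior layer of the enlarged diagonal: a direct count bounds $\E_\e$ of the competitor by $\E_\e(u_{j+1})+c\e$ with $c=c(k)$, while $d_\hh(\partial A_j,\partial A_{j+1})\le c\e$ bounds from below the decrease of $\dis^1_\e$ by a constant multiple of $(\tilde D_{j,i}-c\e)(\beta_{j,i}-1)\e/\tempo$, so that minimality of $u_{j+1}$ and the analysis of the borderline case $\mathrm{dist}(\sqrt2\tempo(1+k)/\tilde D_{j,i},\NN)\ge\e^{1/2}$ give $\beta_{j,i}\le\bmax_{j,i}$. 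The lower bound on $\alpha_{j,i}$ is obtained symmetrically, comparing $u_{j+1}$ with the competitor that moves $\PP_{j+1,i}$ inward by one further layer and relocates the affected surfactant cells: the consequent shrinking of the two adjacent coated diagonals produces an energy gain proportional to $1-k$ which, if $\alpha_{j,i}<\amin_{j,i}$, outweighs the ($\sim\tilde P_{j,i}$, quadratic) dissipation cost and contradicts minimality. Inserting these bounds into
\[\#\zero_{j+1}=\som\frac{D_{j+1,i}}{\sqrt2\e}=\som\Bigl(\frac{D_{j,i}}{\sqrt2\e}+\beta_{j,i}-2\alpha_{j,i}\Bigr)\le\som\Bigl(\frac{D_{j,i}}{\sqrt2\e}+\bmax_{j,i}-2\amin_{j,i}\Bigr),\]
which is Remark~\ref{rem:3.4} combined with \eqref{eq: Lung. lati} and the mod-$4$ reindexing $\som\alpha_{j,i+1}=\som\alpha_{j,i}$, assumption \eqref{eq:5.32} yields $\#\zero_{j+1}\le C_\e-2$.

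For phase (iii), the inequality $\#\zero_{j+1}+2\le C_\e<\#\partial^+\I_{j+1}$ just obtained puts us in the setting of Lemma~\ref{lemma:surfactant placement_around_quasi_octagon}; replacing $u_{j+1}$ by the function $u$ it produces (same $\E_\e$, same number of surfactant cells, $\dis^1_\e$ altered by only $O(\e^3)$), I may assume the diagonals are coated, the surfactant below each parallel side forms a horizontal segment, and each flanking set $\PP_{j+1,i}^\pm$ contains at least two surfactant cells. With this normalisation the variation of $\E_\e$ and of $\dis^1_\e$ produced by adding or removing a single layer at $\PP_{j+1,i}$, respectively at $\tilde{\DD}^+_{j+1,i}$ (always relocating the displaced surfactant), becomes explicit — the coated parallel interface contributing the factor $1-k$, the relocated diagonal coating the factor $1+k$ — so that, exactly as in the proofs of Proposition~\ref{prop:5.3_beginning_movement} and Lemma~\ref{lemma:5_optimal_shape_not_surrounded}(iii), comparing $\enF(u_{j+1},u_j)$ with both the inward and the outward competitor produces the two chains of inequalities that collapse to \eqref{eq:sides_movement_quasi_octagon} and \eqref{eq:sides_movement_quasi_octagon_2}, the multi-valued alternatives appearing precisely when the relevant ratio lies within $c\e$ of an integer.

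The main difficulty is the bookkeeping in phases (ii) and (iii): one has to follow simultaneously the change of $\E_\e$ and of $\dis^1_\e$ for competitors that both move a facet and relocate surfactant cells, across all sub-cases (surfactant present or absent below a given parallel side, $\#\PP_{j+1,i}^\pm=1$ versus $\ge 2$, and the corner regions where a displaced parallel side meets a coated diagonal), and one must respect the order of the arguments: the sharp formulas rely on the normalised placement of Lemma~\ref{lemma:surfactant placement_around_quasi_octagon}, which is available only after the placement-free bounds $\beta_{j,i}\le\bmax_{j,i}$ and $\alpha_{j,i}\ge\amin_{j,i}$ have been used to establish $\#\zero_{j+1}\le C_\e-2$.
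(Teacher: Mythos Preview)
Your proposal is correct and follows essentially the same approach as the paper: reduce to a quasi-octagon via Lemma~\ref{lemma:5_optimal_shape_not_surrounded}, establish the rough one-sided bounds $\alpha_{j,i}\ge\amin_{j,i}$ and $\beta_{j,i}\le\bmax_{j,i}$ by competitor comparisons to deduce $\#\zero_{j+1}\le C_\e-2$ from \eqref{eq:5.32}, then invoke Lemma~\ref{lemma:surfactant placement_around_quasi_octagon} to normalise the surfactant and obtain the sharp two-sided estimates \eqref{eq:sides_movement_quasi_octagon}--\eqref{eq:sides_movement_quasi_octagon_2}. The only structural difference is that the paper presents the two steps in the opposite order (sharp bounds assuming \eqref{eq:6.36} first, then the rough bounds that justify \eqref{eq:6.36}); your ordering is arguably more linear since it avoids the forward reference, and it also makes explicit that the phase~(ii) competitors must be built directly from $u_{j+1}$ rather than from the normalised $u$, with the resulting $O(\e)$ uncertainty in the energy variation absorbed by the $\e^{1/2}$ slack in the definitions \eqref{eq:alpha_bar}.
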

\begin{proof}
    In view of Lemma~\ref{lemma:5_optimal_shape_not_surrounded} there exists $\bar{\e}$ such that for every $\e\le\bar{\e}$ the set $I_{j+1}$ is a quasi-octagon with $C_\e<\#\partial^+I_{j+1}.$ We divide the proof into two steps. In the first one, we show that \eqref{eq:sides_movement_quasi_octagon} and \eqref{eq:sides_movement_quasi_octagon_2} hold under the assumption that $ \#\zero_{j+1}\leq C_\e -2$. The latter inequality is then proved in the second step. \\
    {\it First step.} 
    We assume that
    \begin{equation}\label{eq:6.36} \#\zero_{j+1} = \som\frac{D_{j+1, i}}{\sqrt{2}\e}\le C_\e-2
    \end{equation}
    holds true, and we prove \eqref{eq:sides_movement_quasi_octagon}. For simplicity we write $\alpha_i$ and $\beta_i$ in place of $\alpha_{j,i}$ and $\beta_{j,i}$. We prove \eqref{eq:sides_movement_quasi_octagon} for $\alpha_1$. Consider a function $u$ with the properties stated in Lemma~\ref{lemma:surfactant placement_around_quasi_octagon}-(ii), and we write $I_u = \tilde{I}_u\cup(\cup_{i=1}^4\PP_i)$, with $\PP_i = \dseg{p^i}{q^i}$ and $\tilde{\PP}_i = \dseg{\tilde{p}^i}{\tilde{q}^i}$ for $i=1, \dots, 4$. We observe that since $\PP_1\subset\PP_{j+1, 1}$ we have that $\alpha_{1}$ is also the side displacement of $\PP_1$ with respect to $\PP_{j, 1}.$    
    We observe that, since $u_{j+1}$ is a minimizer, we can assume that either 
    \begin{equation*}
    %\label{eq:5.14}
        p^{j+1,1}_1\le p^{j,1}_1\le q^{j,1}_1\le q^{j+1,1}_1,\;\text{ or }\;p^{j,1}_1\le p^{j+1,1}_1\le q^{j+1,1}_1\le q^{j,1}_1,
    \end{equation*}
    since otherwise we could strictly reduce the dissipation with a horizontal translation of the slice $\PP_{j+1, 1}$. Recalling that from Lemma~\ref{lemma:surfactant placement_around_quasi_octagon} it holds $I_{j+1}\setminus\{p^{j+1, 1}, q^{j+1, 1}\}\subset I_u\subset I_{j+1}$, then for simplicity we complete the proof under the assumption that 
    \begin{equation*}
    %\label{eq:5.15}
        p^{j,1}_1\le p^{1}_1\le q^{1}_1\le q^{j,1}_1,
    \end{equation*}
    since the other case is analogous. Moreover, we work under the assumption that $\alpha_{1}\ge 1$. In order to prove that \eqref{eq:sides_movement_quasi_octagon} holds in case $\alpha_{1}=0$ we can argue as we did in the proof of Proposition~\ref{prop:5.3_beginning_movement}.
    
    We build two competitors $u^\pm$ as follows. We suppose for simplicity that $\PP^+_1\cup\PP^-_1\subset \{u=0\}$, and define
    \[S:=(\PP_{1}^-\setminus\{\tilde{p}^{1}-\e e_2\})\cup(\PP_{1}-\e e_2)\cup(\PP_{1}^+\setminus\{\tilde{q}^{1}-\e e_2\}).\]
    We suppose for simplicity that $S\subset I_j$. We define
    \begin{equation}\label{eq:competitors_1}
    \begin{aligned}
        &u^+(p):=\begin{cases}u(p+\e e_2)&\text{ if }p\in S-\e e_2,\\
        1&\text{ if }p\in S,\\
        u(p)&\text{ otherwise}.
        \end{cases}\\
        &u^-(p):=\begin{cases}
            u(p-\e e_2)&\text{ if }p\in(\PP_{1}^-+\e e_2)\cup\PP_{1}\cup(\PP_{1}^++\e e_2),\\
            -1&\text{ if }p\in \PP_{1}^-\cup(\PP_{1}-\e e_2)\cup\PP_{1}^+,\\
            u(p)&\text{ otherwise.}
        \end{cases}
    \end{aligned}
    \end{equation}
    We have \begin{equation}\label{eq:energy_variation}
        \E_\e(u^-)+2\e(1-k) = \E_\e(u) = \E_\e(u_{j+1}^+)-2\e(1-k),\;\text{ and }\;\E_\e(u) = \E_\e(u_{j+1}).
    \end{equation}
    In order to estimate the dissipation we observe that from Lemma~\ref{lemma:surfactant placement_around_quasi_octagon}-(ii) we have
    \begin{equation}\label{eq:3.20}
        \frac{1}{\tau}|\dis^1_\e(u_{j+1}, u_j) - \dis^1_\e(u, u_j)|\le c\e^2.
    \end{equation}
    Moreover since from Lemma~\ref{lemma:5_optimal_shape_not_surrounded}-(iv) we have $d_\hh(\partial A_{j+1}, \partial A_j)\le c\e$, it follows
    \begin{equation}\label{eq:3.200}
        \frac{1}{\tau}\Bigl(\dis^1_\e(u^+, u_j)-\dis^1_\e(u, u_j)\Bigl)\le -\frac{(P^-_{j, 1}+P_{1}+P^+_{j, 1})\alpha_{1}\e+(P_{j, 1}-P_{1})(\alpha_{1}+1)\e}{\tempo}+c\e^2,
    \end{equation}
    \begin{equation}\label{eq:3.201}
        \frac{1}{\tau}\Bigl(\dis^1_\e(u^-, u_j)-\dis^1_\e(u, u_j)\Bigl)\le \frac{(P^-_{j, 1}+P_{1}+P^+_{j, 1})(\alpha_{1}+1)\e+(P_{j, 1}-P_{1})(\alpha_{1}+2)\e}{\tempo}+c\e^2,
    \end{equation}
    for a positive constant depending only on $\bar{c}$.
    Therefore by \eqref{eq:3.20}, \eqref{eq:3.200} and \eqref{eq:3.201} we deduce that 
    \begin{align}\label{eq:free_dissipation_estimate_1}
    	&\frac{1}{\tau}\Bigl(\dis^1_\e(u^+, u_j)-\dis^1_\e(u_{j+1}, u_j)\Bigl)\le -\frac{\tilde{P}_{j, 1}\alpha_{1}\e+(P_{j, 1}-P_{1})\e}{\tempo}+c\e^2,\\\label{eq:free_dissipation_estimate_20}
    	&\frac{1}{\tau}\Bigl(\dis^1_\e(u^-, u_j)-\dis^1_\e(u_{j+1}, u_j)\Bigl)\le \frac{\tilde{P}_{j, 1}(\alpha_{1}+1)\e+(P_{j, 1}-P_{1})\e}{\tempo}+c\e^2.
    \end{align}
    Since $u_{j+1}$ is a minimizer of $\enF(\cdot, u_j)$, by comparing \eqref{eq:energy_variation} with \eqref{eq:free_dissipation_estimate_1} and \eqref{eq:free_dissipation_estimate_20}, we deduce that 
    \begin{equation}\label{eq:bound_alpha}
        \frac{2(1-k)\tempo}{\tilde{P}_{j, 1}}-\frac{P_{j, 1}-P_1}{\tilde{P}_{j, 1}}-1+c\e\le\alpha_{1}\le \frac{2(1-k)\tempo}{\tilde{P}_{j, 1}}-\frac{P_{j, 1}-P_1}{\tilde{P}_{j, 1}}+c\e.
    \end{equation}    The claim follows directly from \eqref{eq:bound_alpha}, observing that $\alpha_{1}$ is integer, and that
    \[0\le\frac{P_{j, 1}-P_{1}}{\tilde{P}_{j, 1}} \le 1.\]
    We now prove \eqref{eq:sides_movement_quasi_octagon_2} for $\beta_{1}$. As before we suppose that $\beta_{ 1}\ge 1$, and we observe that since $\tilde{\DD_1} = \tilde{\DD}_{j+1, 1}$ then $\beta_{1}$ is also the side displacement of $\DD_1$ with respect to $\DD_{j, 1}$. As before we compare $u_{j+1}$ with competitors $u^\pm$. For simplicity we suppose that $\PP_{1}^-\cup\PP_2^-\subset \{u=0\}$. We claim that there exists a point $\bar{p}$ such that either
    \begin{equation}\label{eq:ipotesi_p_1}
        \bar{p}\in \partial^-I_u\;\text{ and }\;\#(\nn(\bar{p})\cap\{u=-1\}) = 1 = \#(\nn(\bar{p})\cap\{u=0\}),
    \end{equation}
    or 
    \begin{equation}\label{eq:ipotesi_p_2}
        u(\bar{p}) = -1\;\text{ and }\;\#(\nn(\bar{p})\cap\{u=1\}) = 1 = \#(\nn(\bar{p})\cap\{u=0\}).
    \end{equation}
    We argue by contradiction. Consider the points $p^1$ and $q^1$. We observe that if both of them do not verify \eqref{eq:ipotesi_p_1} then since $u(p^1-\e e_1) = u(q^1+\e e_1) = 0$ it follows that $u(p^1-\e e_2)=0 = u(q^1-\e e_2)$. Therefore, since $\PP_1-\e e_2\cap \{u=0\}$ is a segment, we deduce that $\PP_1-\e e_2\subset\{u=0\}$. Similarly we deduce that 
    \[(\PP_2-\e e_1)\cup(\PP_3+\e e_2)\cup(\PP_4+\e e_1)\subset\{u=0\}.\]
    Since $\#\{u=0\} = C_\e< \#\partial^+I_{j+1} = \#\partial^+I_u$ we may assume, without loss of generality, that $\PP_1^+\not\subset \{u=0\}.$ If we denote by $\dseg{p}{q}:=\PP_1^+\cap \{u=0\}$ then point $\bar{p} := q+\e e_1$ satisfies \eqref{eq:ipotesi_p_2}.
    
    Now we define two competitors as follows (see Figures \eqref{fig:2phase_u_piu} and \eqref{fig:2_1_phase}).
    \begin{equation}\label{eq:competitors_2}
\begin{aligned}
u^+(p) &:= 
\begin{cases}
0 & \text{if } p \in \tilde{\DD}^+_1 - \varepsilon e_2,\\
1 & \text{if } p \in \tilde{\DD}_1^+,\\
u(p) & \text{otherwise},
\end{cases}
\\[1ex]
u^-(p) &:= 
\begin{cases}
0 & \text{if } p \in \tilde{\DD}_1 \cup \{\bar{p}\},\\
-1 & \text{if } p \in (\tilde{\DD}_1 - \varepsilon e_2)\cup\{\tilde{p}^2-\varepsilon e_1\},\\
u(p) & \text{otherwise}.
\end{cases}
\end{aligned}
\end{equation}
    \begin{figure}[H]
    \centering
    \resizebox{0.75\textwidth}{!}{\input{competitor_u_piu}}
    \caption{On the right, the competitor $u^+$.}
    \label{fig:2phase_u_piu}
    \end{figure}

    \begin{figure}[H]
    \centering
    \resizebox{0.75\textwidth}{!}{\input{competitor_u_meno}}
    \caption{On the right the competitor $u^-$.}
    \label{fig:2_1_phase}
    \end{figure}
    We find that $\E_\e(u^-)+\e(1+k) = \E_\e(u) = \E_\e(u_{j+1}) = \E_\e(u^+)-\e(1+k).$ To conclude we can argue similarly as before: we can estimate $\dis^1_\e(u^\pm, u_j)$, as we did in \eqref{eq:free_dissipation_estimate_1} and \eqref{eq:free_dissipation_estimate_20}, and then we compare $\enF(u^\pm, u_j)$ with $\enF(u_{j+1}, u_j),$ as follows. 
    Since $\I_j$ and $\I_{j+1}$ are octagons, and since $d_{\mathcal{H}}(\partial A_j, \partial A_{j+1})\le c\e$ (from Lemma~\ref{lemma:5_optimal_shape_not_surrounded}-(iv)), we deduce that 
    \begin{align*}
    %\label{eq:free_dissipation_estimate_2}
        &\frac{1}{\tau}\Bigl(\dis^1_\e(u^+, u_j)-\dis^1_\e(u_{j+1}, u_j)\Bigl)\le -\frac{(\tilde{D}_{j, 1}-c\e)\beta_{1}\e}{\sqrt{2}\zeta}\le -\frac{\tilde{D}_{j, 1}\beta_{1}\e}{\sqrt{2}\zeta}+c\e^2,\\\label{eq:free_dissipation_estimate_beta_2}
    &\frac{1}{\tau}\Bigl(\dis^1_\e(u^-, u_j)-\dis^1_\e(u_{j+1}, u_j)\Bigl)\le \frac{(\tilde{D}_{j, 1}+c\e)(\beta_{1}+1)\e}{\sqrt{2}\zeta}\le \frac{\tilde{D}_{j, 1}(\beta_{ 1}+1)\e}{\sqrt{2}\zeta}+c\e^2.
\end{align*}
In particular by minimality of $u_{j+1}$, we find that 
\[\frac{\sqrt{2}\zeta(1+k)}{\tilde{D}_{j, 1}}-1-c\e\le\beta_{1}\le \frac{\sqrt{2}\zeta(1+k)}{\tilde{D}_{j, 1}}+c\e,\]
and \eqref{eq:sides_movement_quasi_octagon_2} follows since $\beta_{1}$ is integer.
\vspace{5pt}\\
{\it Second step.} We now prove the claim \eqref{eq:6.36}. 
We can compare the minimizer $u_{j+1}$ with competitors $u^-$ of \eqref{eq:competitors_1} and $u^+$ of \eqref{eq:competitors_2}. We get that $\alpha_{i}\ge\amin_i$ and that $\beta_i\le\bmax_i$ for every $i = 1, \dots, 4$. In particular from \eqref{eq: spostamenti} and \eqref{eq:5.32} we deduce 
    \[\#\zero_{j+1} = \som\frac{D_{j+1, i}}{\sqrt{2}\e} = \som\frac{D_{j, i}}{\sqrt{2}\e}+\beta_{j, i}-2\alpha_{j, i}\le\som \frac{D_{j, i}}{\sqrt{2}\e}+\bmax_{j, i}-2\amin_{j, i}\le C_\e-2.\]
    which is \eqref{eq:6.36}.
\end{proof}

\begin{remark}\label{rem:a_priori_estimate_second_phase}
    We point out that \eqref{eq:5.32} was used only to ensure that $\#\zero_{j+1}\le C_\e-2,$ which is by itself a sufficient condition to have \eqref{eq:sides_movement_quasi_octagon} and \eqref{eq:sides_movement_quasi_octagon_2}. 
\end{remark}

\subsection{Stage three: ``nonlocal averaging of velocities by surfactant redistribution''.}
In this section we deal with the case in which the surfactant approximately covers the diagonal sides, i.e. $C_\e\approx\#\zero_{j}.$ 
In view of Proposition~\ref{prop:5.3_beginning_movement} and Proposition~\ref{prop:5.21}, we are left to consider the case in which either $I_j$ is an octagon which has a short diagonal side (see Example~\ref{ex:degenerate_octagon_evolution} and Proposition~\ref{prop:minimizing_movement_null_diagonal}), or $I_j$ is a quasi-octagon such that
\begin{equation}\label{eq:intermediate_situation}
    C_\e-2-\som\bigl(\bmax_{j, i}-2\amin_{j, i}\bigl)<\som\frac{D_{j, i}}{\sqrt{2}\e}<C_\e+2+8c_\alpha
\end{equation}
(see Example~\ref{ex:surfactant_on_diagonals} and Proposition~\ref{prop:sides_movement_when_set_remains_octagon}). We will observe later that there is no need to study the situation in which $I_j$ is a quasi-octagon (which is not an octagon) with a ``short'' diagonal side.

\begin{example}\label{ex:degenerate_octagon_evolution}
    Consider a minimizing movement having as initial datum an octagon such that $D_{0, 1}\ll D_{0, 2} = D_{0, 3} = D_{0, 4},$ and \[\lambda = \sum_{i=2}^4\frac{D_{0, i}}{\sqrt{2}}-\eta,\]
    where $\lambda$ is as in \eqref{eq:lambda} and $\eta>0$ is a small parameter. The evolution at the beginning is described by Proposition~\ref{prop:5.3_beginning_movement}. We observe that we can choose properly the lengths of $P_{0, i}$ and the parameter $\eta,$ so that after a finite number of steps there exists an index $\bar{j}$ such that $D_{\bar{j}, 1} = 0$. In view of Proposition~\ref{prop:5.3_beginning_movement}, as long as \eqref{eq:6.23} holds, for all $j\ge\bar{j}$ we have $D_{j, 1} = 0.$ If the evolution does not interrupt, then after a finite number of steps \eqref{eq:6.23} does not hold anymore, therefore we cannot apply again Proposition~\ref{prop:5.3_beginning_movement} to understand the subsequent evolution. Moreover, since $D_{j, 1} = 0$ we cannot even apply Proposition~\ref{prop:5.21}. In the following proposition we present the evolution in case the set $I_{j}$ has a ``short'' diagonal side.
\end{example}

\begin{proposition}\label{prop:minimizing_movement_null_diagonal}
    Let $\gamma<2$ and $\lambda\ge 0$ as in \eqref{eq:lambda}.
    Let $u_j\in\A_\e$ be satisfying assumption \eqref{H} of Subsection \ref{subsub_H} and let $u_{j+1}$ be a minimizer of $\enF(\cdot, u_j)$. 
    Suppose that $\I_{j}$ is an octagon verifying \eqref{eq:boundary_not_surrounded} of Lemma~\ref{lemma:5_optimal_shape_not_surrounded}, and such that 
    \begin{equation}\label{eq:3.12}
        C_\e\le\som\frac{D_{j, i}}{\sqrt{2}\e}<C_\e+8c_\alpha+2,
    \end{equation}
    so that in particular \eqref{eq:6.23} does not hold.
    Then there exists $\bar{\e}$ such that for every $\e\le\bar{\e}$ the set $\I_{j+1}$ is a (possibly degenerate) octagon. Moreover the following properties hold true.
    \begin{itemize}
        \item[(i)] Foe every $i=1,\dots,4$ it holds
        \begin{equation}
        \label{stima_spostamenti_diagonali}
            \beta_{j, i}\le 2\som\alpha_{j, i}\le8c_\alpha,
        \end{equation}
    where $c_\alpha$ is the constant of Lemma~\ref{lemma:5_optimal_shape_not_surrounded}-(ii). In particular there exists a constant $c$ depending only on $c_\alpha$ (which depends only on $\bar{c}$, the constant of \eqref{H}) such that $d_\hh(\partial A_j, \partial A_{j+1}) \le c\e.$
    \item[(ii)] If there exists an index $\bar{i}\in\{1, \dots, 4\}$ such that $D_{j, \bar{i}}\le \frac{1}{2}\frac{\sqrt{2}\tempo(1+k)}{8c_\alpha+1}$ then $I_{j+1}$ is an octagon with $Z_{j+1}\subset\zero_{j+1}.$
    \item[(iii)] If there exists a sequence $(\ell_\e)_\e$ such that $\ell_\e\to 0$ as $\e\to 0$, and an index $\bar{i}$ such that $D^\e_{j, \bar{i}}\le\ell_\e$ for every $\e$, then for every $\e\le\bar{\e}$ the set $I^\e_{j+1}$ is an octagon with $Z^\e_{j+1}\subset\zero^\e_{j+1}.$ Moreover for every $i = 1, \dots, 4$ such that $D^\e_{j, i}\ge 16 c_\alpha\ell_\e$ it holds $\beta^\varepsilon_{j, i} = 0$. Finally, the side displacements $\alpha^\e_{j, i}$ for $i=1, \dots, 4$ verify the following system:
        \begin{equation}\label{eq:movement_parallel_sides_not_covered3}
            \alpha^\varepsilon_{j,i} \begin{cases}   =\Bigl\lfloor\frac{4\tempo}{P^\e_{j, i}}\Bigl\rfloor&\text{ if }\text{ dist}\bigl(\frac{4\tempo}{P^\e_{j,i}}, \NN\bigl)\ge c\ell_\e\\[0.5em]
                \in\Bigl\{\Bigl[\frac{4\tempo}{P^\e_{j,i}}\Bigl],\Bigl[\frac{4\tempo}{P^\e_{j,i}}\Bigl]-1\Bigl\}&\text{ otherwise.}
            \end{cases}
        \end{equation}
    where $c$ is a positive constant depending only on $\ell_\e$.
    \end{itemize}
\end{proposition}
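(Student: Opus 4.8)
The plan is to deduce everything from Lemma~\ref{lemma:5_optimal_shape_not_surrounded} together with the bookkeeping identity for the number of corner units, and then to pin down the finer structure (octagonality, position of the surfactant, vanishing of the diagonal displacements) by short competitor arguments based on the fact that, under \eqref{eq:3.12}, the diagonals of $\I_j$ hold the surfactant $Z_j$ with only a bounded slack. Throughout, by Lemma~\ref{lemma:5_optimal_shape_not_surrounded} we may assume $\I_{j+1}$ is a quasi-octagon, $Z_{j+1}\subset\partial^+\I_{j+1}$, $\alpha_{j,i}\le c_\alpha$ for all $i$, and $\#\zero_{j+1}\le\max\{\#\zero_j,C_\e\}$ by \eqref{eq:zero_estimate}; since $\I_j$ is an octagon, \eqref{H} forces its parallel sides to have length $\ge\bar c$, so $c_\alpha$ depends only on $\bar c$. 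By Remark~\ref{rem:3.4} applied to $\I_j$ and $\I_{j+1}$ and by \eqref{eq: Lung. lati},
\[
\#\zero_{j+1}=\sum_{i=1}^4\frac{D_{j+1,i}}{\sqrt2\e}=\#\zero_j+\sum_{i=1}^4\beta_{j,i}-2\sum_{i=1}^4\alpha_{j,i}.
\]

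To prove (i), note that \eqref{eq:3.12} gives $\#\zero_j\ge C_\e$, hence $\max\{\#\zero_j,C_\e\}=\#\zero_j$ and $\#\zero_{j+1}\le\#\zero_j$. Plugging this into the identity above yields $\sum_i\beta_{j,i}\le 2\sum_i\alpha_{j,i}\le 8c_\alpha$, and since each $\beta_{j,i}\ge0$ this is exactly \eqref{stima_spostamenti_diagonali}. The bound $d_\hh(\partial A_j,\partial A_{j+1})\le c\e$ then follows from $a_{j,i}=\e\alpha_{j,i}\le c_\alpha\e$ and $b_{j,i}=\e\beta_{j,i}/\sqrt2\le 8c_\alpha\e/\sqrt2$ and the octagonal geometry, with $c$ depending only on $c_\alpha$, hence on $\bar c$. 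Note this step only uses that $\I_{j+1}$ is a quasi-octagon.

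The genuinely delicate point is upgrading $\I_{j+1}$ to an octagon in the general case (i.e.\ under \eqref{eq:3.12} alone). Here I would argue by contradiction: if $\I_{j+1}$ were a proper quasi-octagon, then one of its outermost parallel slices, say $\PP_{j+1,1}$, would be strictly shorter than $\tilde\PP_{j+1,1}$ by more than $2\e$ at one end, and by \eqref{eq:3.6} this slice still has length bounded below by a positive constant. By the first part of Lemma~\ref{lemma:shape_optimality}, the cells forming the ``notch'' must then be occupied by surfactant; but the amount of surfactant sitting off the diagonal part of $\partial I_{j+1}$ is at most $\#\zero_j-\#\zero_{j+1}\le 8c_\alpha$, a bounded number, while the upper bound in \eqref{eq:3.12} — which is precisely the negation of \eqref{eq:6.23} — guarantees that the total diagonal length $\sum_i D_{j+1,i}$ is so small that enlarging the diagonals to store that surfactant in corner units (Definition~\ref{def: corner unit}) is cheap in dissipation. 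A direct comparison — flatten the notch (an $O(\e)$ change of $\E_\e$) and relocate the involved surfactant cells onto an enlarged diagonal (again an $O(\e^2)$ change of $\tfrac1\tau\dis^1_\e$, since a corner unit is at distance $O(\e)$ from $\partial A_j$) — shows the flattened competitor is strictly better, contradicting minimality. This is the main obstacle; it is the core stage-three balance between surfactant stored near flat sides and surfactant stored in the diagonal corners.

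For (ii), assume $D_{j,\bar i}\le\tfrac12\,\tfrac{\sqrt2\tempo(1+k)}{8c_\alpha+1}$ and suppose $\#\zero_{j+1}<C_\e$; since the minimizer fills corner units before flat sides, some surfactant cell would then lie along a parallel slice (in a segment, by minimality). Comparing $u_{j+1}$ with the competitor that moves this cell to a new corner unit created by pushing the line of $\DD_{j+1,\bar i}$ one step inward (so $\beta_{j,\bar i}\mapsto\beta_{j,\bar i}+1$, enlarging $D_{j+1,\bar i}$ by $\sqrt2\e$, recall \eqref{def:upper_diagonal}), one uses $\beta_{j,\bar i}\le 8c_\alpha$ to get $D_{j+1,\bar i}\le D_{j,\bar i}+\sqrt2(8c_\alpha+1)\e$, so the extra dissipation is at most $\tfrac{D_{j+1,\bar i}}{\sqrt2\tempo}\e+O(\e^2)\le\tfrac{1+k}{2}\e+o(\e)$, which is strictly less than the energy gained by turning a flat-side surfactant cell into a corner unit; contradiction. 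Hence $\#\zero_{j+1}\ge C_\e$, and $Z_{j+1}\subset\zero_{j+1}$ by Remark~\ref{rem:blu_on_the_boundary}; in particular there is no surfactant off the diagonals, which re-gives octagonality in this case. Finally, for (iii), for $\e$ small $D_{j,\bar i}\le\ell_\e$ is below the threshold of (ii), so $\I_{j+1}$ is an octagon with $Z_{j+1}\subset\zero_{j+1}$; since all surfactant then sits in corner units and none along the parallel sides, there is no energetic incentive to enlarge any diagonal, so minimality of $\tfrac1\tau\dis^1_\e$ selects the smallest $\beta_{j,i}$ compatible with $D_{j+1,i}\ge0$, which for every $i$ with $D_{j,i}\ge16c_\alpha\ell_\e$ (so that, for $\e$ small, $D_{j,i}\ge\sqrt2\e(\alpha_{j,i}+\alpha_{j,i+1})$ using $\alpha_{j,i}\le c_\alpha$) is $\beta_{j,i}=0$. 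With $\beta$ vanishing on the diagonals adjacent to such a parallel slice $\PP_{j+1,i}$ and no surfactant along it, the energy released or absorbed by displacing $\PP_{j+1,i}$ is the pure perimeter contribution, exactly as in Remark~\ref{rem:energy_is_perimeter}, so repeating verbatim the competitor argument of Proposition~\ref{prop:5.3_beginning_movement} — with error terms of order $\ell_\e$ (instead of $\e$) arising from relocating the bounded number of surfactant cells to and from the short diagonal $\DD_{j+1,\bar i}$ when testing — yields \eqref{eq:movement_parallel_sides_not_covered3}.
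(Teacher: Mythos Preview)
Part~(i) matches the paper. In part~(ii) your competitor is essentially the paper's (push the short diagonal one step inward and relocate the displaced surfactant), but your dissipation estimate $\tfrac{D_{j+1,\bar i}}{\sqrt2\tempo}\e$ is missing the factor $(\beta_{j,\bar i}+1)\le 8c_\alpha+1$ coming from the $L^1$-distance of the removed diagonal row to $\partial I_j$; with that factor the hypothesis $D_{j,\bar i}\le\tfrac12\tfrac{\sqrt2\tempo(1+k)}{8c_\alpha+1}$ gives exactly $\le\tfrac{1+k}{2}\e$, still strictly less than the energy drop $\e(1+k)$. The paper also reverses your order of argument: it first shows (via Lemma~\ref{lemma:shape_optimality} and Lemma~\ref{lemma:surfactant_placement}) that a proper quasi-octagon forces $\tilde\DD^+_{j+1,1}\subset Z_{j+1}$ and then rules it out with this competitor, and only afterwards proves $Z_{j+1}\subset\zero_{j+1}$ by a second (very similar) competitor. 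Your deduction of octagonality \emph{from} $Z_{j+1}\subset\zero_{j+1}$ is left unjustified. Finally, the paper does not attempt to prove octagonality under \eqref{eq:3.12} alone; your sketch for that general claim is too vague to be a proof and is not needed for the applications.

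The genuine gap is in part~(iii), in your argument for $\beta_{j,i}=0$ on long diagonals. You claim that since $\E_\e$ is independent of each $\beta_i$ once $Z_{j+1}\subset\zero_{j+1}$, ``minimality of $\tfrac1\tau\dis^1_\e$ selects the smallest $\beta_{j,i}$ compatible with $D_{j+1,i}\ge 0$''. This ignores that competitors are \emph{not} constrained to keep all surfactant in corner units: decreasing some $\beta_i$ by $1$ decreases $\#\zero_{j+1}$ by $1$, and since by \eqref{eq:3.12} the slack $\#\zero_j-C_\e<8c_\alpha+2$ while $2\sum_i\alpha_{j,i}$ can be as large as $8c_\alpha$, this can force a surfactant cell onto a flat side, \emph{raising} the energy by $\e(1+k)$ --- so your competitor need not be better. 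The paper's argument is a \emph{swap} that sidesteps this: assuming (say) $\beta_{j,2}\ge 1$ with $D_{j,2}\ge 16c_\alpha\ell_\e$ and using the short diagonal $D_{j,1}\le\ell_\e$, one replaces $\beta_1\mapsto\beta_{j,1}+1$, $\beta_2\mapsto\beta_{j,2}-1$. This keeps $\sum_i\beta_i$, hence $\#\zero_{j+1}$ and the energy, fixed, while by the explicit dissipation formula \eqref{eq:explicit_dissipation} the change is
\[
\frac{\e}{2\sqrt2\tempo}\Bigl(D_{j,1}(2\beta_{j,1}+2)-2D_{j,2}\beta_{j,2}\Bigr)+O(\e^2)\le\frac{\e}{2\sqrt2\tempo}\bigl(\ell_\e(16c_\alpha+2)-32c_\alpha\ell_\e\bigr)+O(\e^2)<0,
\]
contradicting minimality. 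This swap is where the threshold $16c_\alpha\ell_\e$ actually enters, and your argument must be replaced by it.
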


    \begin{proof}
   By Lemma~\ref{lemma:5_optimal_shape_not_surrounded} there exists $\bar{\e}$ such that $\I_{j+1}$ is a quasi-octagon for every $\e\le\bar{\e}$. We denote by $\DD_{j+1, i},\,i = 1, \dots, 4$ the diagonal sides of the smallest octagon containing $I_{j+1}$, and by $D_{j+1, i}$ their lengths. From Lemma~\ref{lemma:5_optimal_shape_not_surrounded}-(i) we have that 
   \begin{equation}\label{eq:appr}
       \som \frac{D_{j+1, i}-D_{j, i}}{\sqrt{2}\e} = \#\zero_{j+1}-\#\zero_j\le 0.
   \end{equation}
   By \eqref{eq: Lung. lati}, Lemma~\ref{lemma:5_optimal_shape_not_surrounded}-(ii) and \eqref{eq:appr} we find
        \begin{equation}
        \label{eq:controllo spost diagonali}            \sum_{i=1}^4 \frac{D_{j+1,i}-D_{j, i}}{\sqrt{2}\e}=\sum_{i=1}^4 \beta_{j,i}-2\alpha_{j,i}\le 0\implies \som\beta_{j, i}\le 2\som\alpha_{j, i}\le 8c_\alpha,
        \end{equation}
        and \eqref{stima_spostamenti_diagonali} follows.
Now we prove (ii). In order to prove that $\I_{j+1}$ is an octagon, it is enough  to show that any other quasi-octagonal configuration is not a minimizer. Therefore, we suppose by contradiction that $I_{j+1} = \tilde{I}_{j+1}\cup(\cup_{i=1}^4\PP_{j+1, i})$ is a quasi-octagon which is not an octagon, and we set 
$\PP_{j+1, i} = \dseg{p^i}{q^i},\tilde{\PP}_{j+1, i} = \dseg{\tilde{p}^i}{\tilde{q}^i},\,\tilde{\DD}^+_{j+1,i}$ as in Definition~\ref{def:quasi_octagon}. Without loss of generality we suppose that $\bar{i} = 1$ (where $\bar{i}$ is the index of (ii)). Since $I_{j+1}$ is a quasi-octagon but not an octagon, there exists an index $i$ such that $\#\PP^-_{j+1, i}\ge 2$ or $\#\PP_{j+1, i}^+\ge 2$ (see \eqref{eq: quasi_oct sides} in Definition~\ref{def:quasi_octagon}). For simplicity we complete the proof assuming that $\#\PP_{j+1, 1}^-\ge 2$. Since $\PP_{j+1, 1}^-$ is not contained in $\{u_j\neq 1\},$ by the second part of Lemma~\ref{lemma:shape_optimality} we deduce that $\PP_{j+1, 1}^-\subset\Z_{j+1}$ (and similarly we have that $\PP_{j+1, 2}^-\subset\Z_{j+1}$). In particular $\tilde{p}^1-\e e_2\in\Z_{j+1}$, and since $\#\nn(\tilde{p}^1-\e e_2)\cap I_{j+1} = 1,$ we deduce from Lemma~\ref{lemma:surfactant_placement} that $\zero_{j+1}\subset Z_{j+1}.$ It follows that $\tilde{\DD}_{j+1, 1}^+\subset\Z_{j+1}.$ 
As in the proof of Proposition~\ref{prop:5.21} there exists a point $\bar{p}$ which satisfies either \eqref{eq:ipotesi_p_1} or \eqref{eq:ipotesi_p_2}. Let us assume that $\overline{p}$ satisfies \eqref{eq:ipotesi_p_1}; the other case can be treated analogously. We consider the following competitor (see Figure \ref{fig:quasioct not stable})
\begin{equation*}
%\label{eq:competitor_tilde}   
\tilde{u}_{j+1}(p)=
            \begin{cases}
                0&\text{if}\ p \in \tilde{\DD}_{j+1,1}\cup \{\bar{p}\}\\
                -1 &\text{if}\ p \in \tilde{\DD}^+_{j+1,1}\cup\{\tilde{p}^1-\e e_2\}\cup\{\tilde{p}^2-\e e_1\}\\
                u_{j+1}(p)&\text{otherwise}.
            \end{cases}
\end{equation*}
  \begin{figure}[H]
    \centering
    \resizebox{0.50\textwidth}{!}{\input{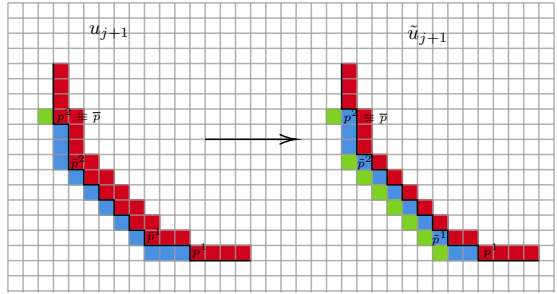}}
    \caption{On the tight, the competitor $\tilde{u}_{j+1}$ in case $\bar{p} = p^2.$}
    \label{fig:quasioct not stable}
\end{figure} 
%\begin{figure}
    %\centering
    %\includegraphics[width=0.5\linewidth]{Conti con surfactant perimetrale.jpeg}
    %\caption{Figura provvisoria}
    %\label{fig:quasioct not stable}
%\end{figure}
\begin{comment}    
    \begin{figure}[H]
    \centering
    \resizebox{0.80\textwidth}{!}{\input{quasioct_not_stable}}
    \caption{}
    \label{fig:quasioct not stable}
\end{figure}
\end{comment}
We have that $\E_\e(\tilde{u}_{j+1})-\E_\e(u_{j+1})=-\e(1+k)<0$, $\dis^0_\e(\tilde{u}_{j+1},u_j)=\dis^0_\e(u_{j+1},u_j),$ and by using \eqref{eq:controllo spost diagonali} it follows

  \begin{equation*}
        %\label{eq:5.31}
        \begin{split}
             \frac{\dis^1_\e(\tilde{u}_{j+1},u_j)-\dis^1_\e(u_{j+1},u_j)}{\tau}&\le \frac{(\e\cdot(\#\tilde{\DD}_{j+1, 1}+1))\cdot\max_{p\in\tilde{\DD}_{j+1, 1}\cup\{\bar{p}\}}d^1_\e(p, \partial^+\I_{j})}{\tempo}\\
             &\le \frac{1}{2}\frac{(1+k)}{8c_\alpha+1}(8c_\alpha+1)\e = \frac{(1+k)}{2}\e.
        \end{split}
  \end{equation*}
Hence
\[\E_\e(\tilde{u}_{j+1})-\E_\e(u_{j+1})+\frac{1}{\tau}\left(\dis^1_\e(\tilde{u}_{j+1},u_j)-\dis^1_\e(u_{j+1},u_j)\right)\le-\e(1+k)+\frac{\e(1+k)}{2}<0,\]
and we conclude that any quasi-octagon which is not an octagon cannot be a minimizer.

Now we show that $\Z_{j+1}\subset\zero_{j+1}$. To this end we may assume by contradiction that there exist $p \in (\partial^+I_{j+1} \cap Z_{j+1})\setminus \zero_{j+1}$. We have a contradiction with the minimality of $u_{j+1}$ with a similar argument as above, defining a competitor $\tilde{u}_{j+1}$ as in Figure \ref{fig:quasioct not stable_2}.

 \begin{figure}[H]
    \centering
    \resizebox{0.50\textwidth}{!}{\input{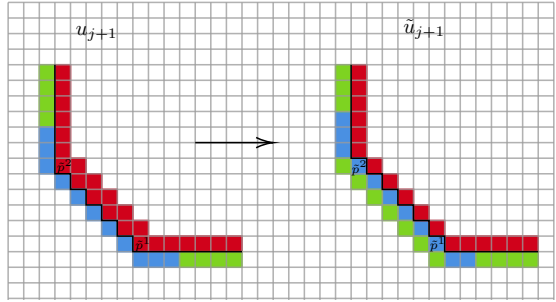}}
    \caption{On the right the configuration that minimize the total energy.}
    \label{fig:quasioct not stable_2}
\end{figure} 

\noindent 

We prove (iii). We already know that $I^\e_{j+1}$ is an octagon with $Z^\e_{j+1}\subset\zero^\e_{j+1}$. We prove that $\beta^\varepsilon_{j, i} = 0$ for every $i$ such that $D^\varepsilon_{j, i}\ge 16\ell_\e c_\alpha.$ Suppose by contradiction that, without loss of generality, $D^\e_{j, 2}\ge 16\ell_\e c_\alpha,$ and that $\beta^\e_{j, 2}\ge1$. We introduce a competitor $u\in\A_\e$ as follows. We define the set $I_u$ through equations \eqref{eq: Lung. lati} by setting $\alpha^\e_i = \alpha^\e_{j, i}$ for every $i$, and 
\begin{equation*}
%\label{eq:beta}
    \beta^\e_{i} = \begin{cases}
        \beta^\e_{j, 1}+1&\text{ if }i=1,\\
        \beta^\e_{j, 2}-1&\text{ if }i=2,\\
        \beta^\e_{j, i}&\text{ otherwise}.
    \end{cases}
\end{equation*}
Observing that $\#\zero_u = \#\zero_{j+1}\ge C_\e$ (where $\zero_u$ is the set $\zero$ related to $I_u$), we consider any set $S\subset\zero_u$ having $\#S = C_\e$, and we set $Z_u =S$. We have that $\E_\e(u) = \E_\e(u_{j+1}).$ By the definition of dissipation in \eqref{eq:dis_1} we observe that
\begin{equation}\label{eq:explicit_dissipation}
\begin{split}
    \frac{1}{\tempo} \dis^1_\e(u_{j+1},u_j)=  \som \frac{\e}{\tempo}P_{j,i}\frac{\alpha_{j, i}(\alpha_{j, i}+1)}{2}
    +\som \frac{\e}{\tempo}\frac{D_{j,i}}{\sqrt{2}}\frac{\beta_{j, i}(\beta_{j, i}+1)}{2}+e_\e
\end{split}
\end{equation}
where $e_\e$ is an error term depending on $\alpha_{j, i}$ and $\beta_{j, i}$ satisfying $|e_\e(\alpha_{j, i}, \beta_{j, i})|\le c\e^2$ (see also \cite[Subsection 4.2.2]{BS} for more details). From \eqref{eq:explicit_dissipation} we deduce that
\begin{equation*}
\begin{split}
\frac{\dis^1_\e(u, u_j)-\dis^1_\e(u_{j+1}, u_j)}{\tempo}&\le \frac{\e}{2\sqrt{2}\zeta}\Bigl(D_{j, 1}(2\beta_{j, 1}+2)-2D_{j, 2}\beta_{j, 2}\Bigl)+c\e^2\\ &\le \frac{\e}{2\sqrt{2}\zeta}\Bigl(\ell_\e(16c_\alpha+2)-32c_\alpha\ell_\e\Bigl)+c\e^2<0
\end{split}
\end{equation*}
which contradicts the minimality of $u_{j+1}$ and completes the claim. 

To conclude we are left to prove \eqref{eq:movement_parallel_sides_not_covered3}. 
From statement (ii) on $\beta_{j, i}$ of this proposition, and from \eqref{eq:explicit_dissipation} we have 
\begin{equation*}
    \frac{1}{\tau}\dis^1_\e(u_{j+1},u_j) =  \frac{\e}{\tempo}\som P_{j,i}\frac{\alpha_{j,i}(\alpha_{j,i}+1)}{2}+e_\e
\end{equation*}
where $e_\e$ is an error term (which we do not rename) depending on $\alpha_{j, i},\,\beta_{j, i}$ satisfying $|e_\e| \leq c\ell_\e\e$.
Finally, from Remark~\ref{rem:energy_is_perimeter} the energy of $u_{j+1}$ is given by
\begin{align*}
    \E_\e(u_{j+1}) = 2Per(I_{j+1}) + const(\e) = 2\bigl(Per(I_j) -2\e\som\alpha_{j, i}\bigl)+ const(\e).
\end{align*}
Hence the sides displacements $\alpha_{j,i}$ are obtained by solving the following minimum problem
$$\min\left\{-4\e\alpha_{j,i}+\frac{\e}{\tempo}P_{j,i}\frac{\alpha_{j,i}(\alpha_{j,i}+1)}{2}+e_\e: \alpha_{j,i}\in \mathbb{N}\cup\{0\}\right\},$$ and  \eqref{eq:movement_parallel_sides_not_covered3} follows by direct computation.
\end{proof}

\begin{remark}
    We point out that in case (iii) it is no longer true that the diagonal sides are pinned, as it was in Proposition~\ref{prop:5.3_beginning_movement}. Moreover the displacements $\beta_{j, i}$ do not depend only on the length of the corresponding diagonal side $D_{j, i},$ but they depend also on the lengths of the other diagonal sides. In other words, side displacements $\beta_{j, i}$ show a nonlocal behavior.
\end{remark}

We may now turn to the case in which $I_{j}$ is a quasi-octagon satisfying \eqref{eq:intermediate_situation} having ``long'' diagonal sides.

\begin{example}\label{ex:surfactant_on_diagonals}Let $u_j$ be either an octagon such that $\#\zero_j = C_\e$, or a quasi-octagon such that $\#\zero_{j} = C_\e-1.$ We define
\begin{equation}\label{eq:amax}
\begin{aligned} \amax_{j,i}&:=\begin{cases}
            \Bigl\lceil\frac{2\tempo(1-k)}{\tilde{P}_{j,i}}\Bigl\rceil&\text{ if }\text{ dist}\Bigl(\frac{2\tempo(1-k)}{\tilde{P}_{j,i}}, \NN\Bigl)\ge \e^{1/2},\\[.5em]  \Bigl[\frac{2\tempo(1-k)}{\tilde{P}_{j,i}}\Bigl]+1&\text{ otherwise},
        \end{cases}\\[.5em]
        \bmin_{j,i} & :=  \begin{cases}     \Bigl\lfloor\frac{\sqrt{2}\tempo(1+k)}{\tilde{D}_{j,i}}\Bigl\rfloor&\text{ if }\text{dist}\Bigl(\frac{\sqrt{2}\tempo(1+k)}{\tilde{D}_{j,i}}, \NN\Bigl)\ge\e^{1/2}\\[.5em]
        \Bigl[\frac{\sqrt{2}\tempo(1+k)}{\tilde{D}_{j,i}}\Bigl]-1&\text{otherwise}.
        \end{cases}
    \end{aligned}
\end{equation}
We claim that if 
\begin{equation}
\label{eq:suff_condition} \som\frac{D_{j, i}}{\sqrt{2}\e}+\bmin_{j,i}-2\amax_{j,i}>C_\e
\end{equation}
then even $I_{j+1}$ is either an octagon such that $\#\zero_{j+1} = C_\e$, or a quasi-octagon such that $\#\zero_{j+1} = C_\e-1.$ \\
In order to prove the claim, we observe that in view of \eqref{eq:zero_estimate} of Lemma~\ref{lemma:5_optimal_shape_not_surrounded}-(i) it holds 
\begin{equation}
\label{stima_cardinality}
    \#\zero_{j+1}\le C_\e.
\end{equation}
Now we argue by contradiction assuming that  $I_{j+1}$ is a quasi-octagon such that $\#\zero_{j+1}\le C_\e-2$. In view of Remark~\ref{rem:a_priori_estimate_second_phase} and Proposition~\ref{prop:5.21} the side displacements $\alpha_{j, i}$ and $\beta_{j, i}$ verify \eqref{eq:sides_movement_quasi_octagon} and \eqref{eq:sides_movement_quasi_octagon_2}. In particular $\alpha_{j, i}\le\amax_{j, i}$ and $\beta_{j, i}\ge\bmin_{j, i}$. Then, by \eqref{eq:suff_condition}, it follows that
\[\#\zero_{j+1} = \som\frac{D_{j, i}}{\sqrt{2}\e}+\som\beta_{j, i}-2\alpha_{j, i}\ge\som\frac{D_{j, i}}{\sqrt{2}\e}+\som\bmin_{j,i}-2\amax_{j,i}>C_\e,\]
which contradicts \eqref{stima_cardinality} . Therefore, it must be $\#\zero_{j+1}=C_\e$ or $\#\zero_{j+1}=C_\e-1$. 

We observe that, roughly speaking, \eqref{eq:suff_condition} means that ``the diagonal sides are too short with respect to the parallel sides''. 

To conclude we point out that a consequence of this example is that as long as \eqref{eq:suff_condition} holds, the minimizing set $I_j$ remains an octagon or a quasi-octagon with $\#\zero_j = C_\e-1.$ In particular the intermediate stage can last for a long time, and therefore it cannot be neglected when considering the limit flow.
\end{example}

\begin{proposition}\label{prop:sides_movement_when_set_remains_octagon}
Let $\gamma<2$. Let $u_j\in\A_\e$ be such that $\I_j = \tilde{\I}_j\cup(\cup_{i=1}^4\PP_{j, i})$ is a quasi-octagon verifying assumption \eqref{H} of Subsection \ref{subsub_H}. Denoting by $D_{j,i}$ the lengths of the diagonal sides of the smallest octagon containing $\I_j$, we suppose that $D_{j, i}\ge\bar{c}$ (where $\bar{c}$ is the constant of \eqref{H}) for every $i=1, \dots, 4$, and that 
\begin{equation}\label{eq:intermediate_situation_nuova}
    C_\e-2-\som(\bmax_{j,i}-2\amin_{j,i})<\som\frac{D_{j, i}}{\sqrt{2}\e}.
\end{equation}

Let $u_{j+1}$ be a minimizer of $\enF(\cdot, u_j)$, and denote by 
\begin{equation*}
%\label{eq:constraint}
    S_D:=\sum_{i=1}^4\frac{1}{D_{j, i}},\;\;\;S_P:=\sum_{i=1}^4\frac{1}{\tilde{P}_{j, i}},\quad \xi:=\som\frac{D_{j+1, i}-D_{j, i}}{\sqrt{2}\e} = \#\zero_{j+1}-\#\zero_j. 
\end{equation*}
    Then, there exists $\bar{\e}$ such that for every $\e\le\bar{\e}$ the set $\I_{j+1}$ is a quasi-octagon, and the side displacements $\alpha_{j, i}$ and $\beta_{j, i}$ satisfy 
    \begin{align}\label{eq:sides_movement_when_set_remains_octagon}
        &\alpha_{j,i}\begin{cases}
            \in\Bigl\{\Bigl\lfloor\frac{1}{\tilde{P}_{j,i}}\frac{4+4\sqrt{2}\tempo S_D-\xi}{\sqrt{2}S_D+4 S_P}\Bigl\rfloor, \Bigl\lceil\frac{1}{\tilde{P}_{j,i}}\frac{4+4\sqrt{2}\tempo S_D-\xi}{\sqrt{2}S_D+4 S_P}\Bigl\rceil\Bigr\}\text{ if }\text{ dist }\Bigl(\frac{1}{\tilde{P}_{j,i}}\frac{4+4\sqrt{2}\tempo S_D-\xi}{\sqrt{2}S_D+4 S_P}, \NN\Bigl)\ge\e^{1/2},&\\[0.5em]
            \in\Bigl\{\Bigl[\frac{1}{\tilde{P}_{j,i}}\frac{4+4\sqrt{2}\tempo S_D-\xi}{\sqrt{2}S_D+4 S_P}\Bigl]-1, \Bigl[\frac{1}{\tilde{P}_{j,i}}\frac{4+4\sqrt{2}\tempo S_D-\xi}{\sqrt{2}S_D+4 S_P}\Bigl],\Bigl[\frac{1}{\tilde{P}_{j,i}}\frac{4+4\sqrt{2}\tempo S_D-\xi}{\sqrt{2}S_D+4 S_P}\Bigl]+1\Bigl\}\text{ otherwise;} &\end{cases}\\[.5em]      \label{eq:sides_movement_when_set_remains_octagon_2}
            &\beta_{j,i}\begin{cases}
            \in\Bigl\{\Bigl\lfloor\frac{1}{D_{j,i}}\frac{\xi-2+8\tempo S_P}{S_D+2\sqrt{2} S_P}\Bigl\rfloor,\Bigl\lceil\frac{1}{D_{j,i}}\frac{\xi-2+8\tempo S_P}{S_D+2\sqrt{2} S_P}\Bigl\rceil\Bigl\}\text{ if }\text{ dist}\Bigl(\frac{1}{D_{j,i}}\frac{\xi-2+8\tempo S_P}{S_D+2\sqrt{2} S_P}, \NN\Bigl)\ge\e^{1/2},&\\[0.5em]\in\Bigl\{\Bigl[\frac{1}{D_{j,i}}\frac{\xi-2+8\tempo S_P}{S_D+2\sqrt{2} S_P}\Bigl]-1, \Bigl[\frac{1}{D_{j,i}}\frac{\xi-2+8\tempo S_P}{S_D+2\sqrt{2} S_P}\Bigl], \Bigl[\frac{1}{D_{j,i}}\frac{\xi-2+8\tempo S_P}{S_D+2\sqrt{2} S_P}\Bigl]+1\Bigl\}\text{ otherwise.}&
        \end{cases}
    \end{align}
\end{proposition}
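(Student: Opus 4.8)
The plan is to adapt the scheme of Propositions~\ref{prop:5.3_beginning_movement} and~\ref{prop:5.21}: reduce the minimality of $u_{j+1}$ to an explicit finite-dimensional optimization in the integer displacement vector $(\alpha_{j,i},\beta_{j,i})_{i=1}^4$, and solve it. The genuinely new feature is that, in the present regime, the conserved surfactant mass forces a linear relation between the $\alpha$'s and the $\beta$'s, turning the optimization into a \emph{constrained} quadratic minimization whose Lagrange-multiplier solution produces precisely the reciprocal-length sums $S_D=\sum_i 1/D_{j,i}$ and $S_P=\sum_i 1/\tilde P_{j,i}$ in the denominators of \eqref{eq:sides_movement_when_set_remains_octagon}--\eqref{eq:sides_movement_when_set_remains_octagon_2}. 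One may also assume that \eqref{eq:6.23} fails, since otherwise Proposition~\ref{prop:5.3_beginning_movement} applies; together with \eqref{eq:intermediate_situation_nuova} this localizes $\sum_i D_{j,i}/(\sqrt2\e)$ at distance $O(1)$ from $C_\e$.

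\emph{Step 1: structure and a priori bounds.} Since $u_j$ satisfies \eqref{H}, \eqref{eq:boundary_not_surrounded} holds, and $D_{j,i}\ge\bar c$, Lemma~\ref{lemma:5_optimal_shape_not_surrounded} gives $\bar\e$ such that for $\e\le\bar\e$ the set $\I_{j+1}$ is a quasi-octagon with $\Z_{j+1}\subset\partial^+\I_{j+1}$, $\#\zero_{j+1}\le\max\{\#\zero_j,C_\e\}$, and $\alpha_{j,i}\le c_\alpha$, $\beta_{j,i}\le c_\beta$, $d_\hh(\partial A_j,\partial A_{j+1})\le c\e$. Using $\#\zero_{j+1}=\#\zero_j+\sum_i(\beta_{j,i}-\alpha_{j,i}-\alpha_{j,i+1})$ together with these bounds and \eqref{eq:intermediate_situation_nuova} (resp.\ the failure of \eqref{eq:6.23}) one gets $|\#\zero_{j+1}-C_\e|\le c$ for a constant depending only on $\bar c$; equivalently, the number $|C_\e-\#\zero_{j+1}|$ of surfactant cells that cannot be accommodated in the diagonal corners is $O(1)$.

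\emph{Steps 2--3: energy collapse and constrained minimization.} A surfactant cell placed in a point of $\zero_{j+1}$ lowers $\E_\e$ by $4k\e$, whereas one adjacent to a flat parallel side lowers it only by $(4k-2)\e$; hence, by Lemma~\ref{lemma:surfactant_placement} and Remark~\ref{rem:blu_on_the_boundary}, for $\e$ small $u_{j+1}$ fills all of $\zero_{j+1}$ with surfactant when $\#\zero_{j+1}\le C_\e$ and satisfies $\Z_{j+1}\subset\zero_{j+1}$ otherwise, the $O(1)$ residual cells being grouped into one segment along a parallel side. With Remarks~\ref{rem:3.4} and~\ref{rem:energy_is_perimeter} and the perimeter identity $Per(A_{\I_{j+1}})=Per(A_{\I_j})-2\e\sum_i\alpha_{j,i}$ (modulo $O(\e^2)$ from the residual slice) this yields, writing $\xi:=\#\zero_{j+1}-\#\zero_j=\sum_i\beta_{j,i}-2\sum_i\alpha_{j,i}$,
\begin{equation*}
\E_\e(u_{j+1})=2\,Per(A_{\I_j})-4\e\sum_i\alpha_{j,i}+\mathrm{const}(\e,\xi)+O(\e^2),
\end{equation*}
the same expansion being valid for every competitor whose positive set is a quasi-octagon with $\#\Z=C_\e$ and the same $\xi$; and the dissipation has the explicit form (compare \eqref{eq:explicit_dissipation}, with $\tilde P_{j,i}$ for a quasi-octagon)
\begin{equation*}
\tfrac1\tau\dis^1_\e(u_{j+1},u_j)=\frac\e\zeta\sum_i\tilde P_{j,i}\frac{\alpha_{j,i}(\alpha_{j,i}+1)}2+\frac\e\zeta\sum_i\frac{D_{j,i}}{\sqrt2}\frac{\beta_{j,i}(\beta_{j,i}+1)}2+O(\e^2).
\end{equation*}
Thus $(\alpha_{j,i},\beta_{j,i})_i$ minimizes, up to $O(\e^2)$, the strictly convex functional $-4\e\sum\alpha_i+\frac\e\zeta\sum\tilde P_{j,i}\frac{\alpha_i^2+\alpha_i}2+\frac\e\zeta\sum\frac{D_{j,i}}{\sqrt2}\frac{\beta_i^2+\beta_i}2$ over $(\NN\cup\{0\})^8$ subject to $\sum_i\beta_i-2\sum_i\alpha_i=\xi$. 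Relaxing to reals with a multiplier $\lambda$, the first-order conditions read $\alpha_i+\tfrac12=\zeta(4+2\lambda/\e)/\tilde P_{j,i}$ and $\beta_i+\tfrac12=-\sqrt2\zeta(\lambda/\e)/D_{j,i}$; the constraint then forces $\lambda/\e=-(\xi-2+8\zeta S_P)/(\zeta(\sqrt2 S_D+4S_P))$, and substituting back — together with the elementary fact that the integer minimizer of $an^2+bn$ is $\lfloor\tfrac12-\tfrac b{2a}\rfloor$ (and lies in $\{[\,\cdot\,]-1,[\,\cdot\,],[\,\cdot\,]+1\}$ near a tie) — gives \eqref{eq:sides_movement_when_set_remains_octagon}--\eqref{eq:sides_movement_when_set_remains_octagon_2}. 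The rounding and error terms are controlled in the usual way: for $i\neq i'$ one builds competitors that transfer one unit of displacement from side $i$ to side $i'$ while preserving $\xi$ — for two diagonals, by moving one surfactant cell from the longer to the shorter diagonal; for two parallel sides, by compensating with diagonal units; for an $\alpha$/$\beta$ trade, by moving two corner cells to or from the residual segment — and $\enF(u^\pm,u_j)\ge\enF(u_{j+1},u_j)$ turns into the inequalities pinning each $\alpha_{j,i},\beta_{j,i}$ to the stated interval.

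\emph{Main obstacle.} The delicate point is Step 2: establishing the collapse $\E_\e=2\,Per+\mathrm{const}(\e,\xi)+O(\e^2)$ \emph{uniformly} for $u_{j+1}$ and for all relevant competitors. This intertwines the surfactant-placement dichotomy of Lemma~\ref{lemma:surfactant_placement} with a careful perimeter bookkeeping for quasi-octagons carrying $O(1)$ residual surfactant cells — one must keep track of $\I_{j+1}$, of the octagon $\tilde\I_{j+1}$, and of the smallest octagon $A_{j+1}\supset A_{\I_{j+1}}$ relative to which the $\alpha_{j,i},\beta_{j,i}$ are defined — and it requires ruling out, via the a priori bounds of Step 1, the competing scenarios of Propositions~\ref{prop:5.3_beginning_movement} and~\ref{prop:5.21}, i.e.\ that the minimizer leave the ``kink'' $\#\zero_{j+1}\approx C_\e$. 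Steps 1 and 3 are otherwise the arguments already used in those propositions together with a routine Lagrange-multiplier computation.
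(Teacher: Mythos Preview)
Your proposal is correct and follows essentially the same approach as the paper: invoke Lemma~\ref{lemma:5_optimal_shape_not_surrounded} for the quasi-octagon structure and a priori bounds, show that only $O(1)$ surfactant cells fall outside $\zero_{j+1}$ so that the energy collapses to $4\e\sum_i(\alpha_i-\beta_i)+\mathrm{const}(\e,\xi)$ (equivalently $-4\e\sum_i\alpha_i+\mathrm{const}(\e,\xi)$ under the constraint), write the explicit dissipation, and solve the resulting constrained quadratic by Lagrange multipliers followed by integer rounding. The one place where the paper is more specific than your sketch is the location of the residual surfactant: it proves $(\PP_{j+1,i}\mp\e e_2)\cap Z_{j+1}=\emptyset$ via a shift competitor, so the $O(1)$ leftover cells sit in the flanking segments $\PP_{j+1,i}^\pm$ rather than adjacent to the flat parallel sides --- this is what makes the energy formula \eqref{eq:energy_intermediate_stage} exact and not just $O(\e^2)$-approximate.
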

\begin{proof}
    From Lemma~\ref{lemma:5_optimal_shape_not_surrounded} there exists $\bar{\e}$ such that for every $\e\le\bar{\e}$ the set $\I_{j+1}$ is a quasi-octagon. We write $\alpha_i,\,\beta_i$ in place of $\alpha_{j, i},\,\beta_{j, i}$. 
    From Lemma~\ref{lemma:5_optimal_shape_not_surrounded}-(ii) we have $\alpha_{j, i}\le c_\alpha$ for every $i$. We deduce that 
    \begin{equation*}
    %\label{eq:estimate_n}
    n:=C_\e-\#\zero_{j+1} = C_\e-\#\zero_j-\som(\beta_{i}-2\alpha_{i})\le C_\e-\#\zero_j+8c_\alpha\le 16c_\alpha+2,
    \end{equation*}
    where last inequality follows combining  \eqref{stima_spostamenti_diagonali} and \eqref{eq:intermediate_situation_nuova}. We claim that $(\PP_{j+1, 1}-\e e_2)\cap\Z_{j+1} = \emptyset$ (and we can deduce similar result after replacing $\PP_{j+1,1}$ with $\PP_{j+1, i}$ for $i=2, 3, 4$). In order to get the claim, we argue by contradiction. We set
    \[S:=\Z_{j+1}\cap(\PP_{j+1, 1}-\e e_2),\] which we assume to be non empty,
    and without loss of generality we can assume that $S = \dseg{p^{j+1, 1}-\e e_2}{p^{j+1, 1}-\e e_2+(n-1)\e e_1}$. We consider the competitor $\tilde{u}_{j+1}$ defined by replacing $u_{j+1}(S)\mapsto -1$ and $u_{j+1}(S+\e e_2)\mapsto 0$; since $\#S = n\le 16 c_\alpha+2$ it follows that for $\e$ sufficiently small $\enF(\tilde{u}_{j+1},u_j)<\enF(u_{j+1},u_j)$, which contradicts the minimality of $u_{j+1}.$ 

    From the discussion above, it follows that $\zero_{j+1}\subset\Z_{j+1}\subset\zero_{j+1}\cup(\cup_{i=1}^4\PP_{j+1, i}^\pm)$, where segments $\PP_{j+1, i}^\pm$ are as in Definition~\ref{def:quasi_octagon}. It follows that 
    \begin{equation}
    \label{eq:energy_intermediate_stage}
    \begin{split}
        \E_\e(u_{j+1}) =& 4\e(1-k)\#\zero_{j+1}+2\e\sum_{i=1}^4\frac{\tilde{P}_{j+1,i}-2\e}{\e}+(C_\e-\#\zero_{j+1})\bigl(-2\e+3\e(1-k)\bigl)\\
        &=2\sum_{i=1}^4\tilde{P}_{j+1,i}+const(\e,\xi, \#\zero_j)\\
        &= \sum_{i=1}^4(4\alpha_i-4\beta_i)+const(\e, \tilde{P}_{j, i}, \xi, \#\zero_j),
    \end{split}
    \end{equation}
    where $const$ is a constant term changing from line to line, depending only on the (constant) quantities within the parentheses.
    
    For simplicity, we now suppose that $\cup_{i=1}^4\PP_{j, i}^\pm\subset \Z_{j}$. The general case is similar. With this choice, we have that $\dis^1_\e(u_{j+1}, u_j)$ can be written as (recalling $\tau = \tempo\e$)
    \begin{equation}\label{eq:dissipation}
        \frac{1}{\tau}\dis^1_\e(u_{j+1}, u_j) = \som\frac{\e}{2\tempo}\tilde{P}_{j, i}\alpha_i(\alpha_i+1)+\som\frac{\e}{2\sqrt{2}\tempo}D_{j, i}\beta_i(\beta_i+1)+e_\e,
    \end{equation}
    where $e_\e$ is an error term depending on displacements $\alpha_{j, i}$ and $\beta_{j, i}$ satisfying $|e_\e|\le c\e^2$.
    Therefore, combining  \eqref{eq:energy_intermediate_stage} and \eqref{eq:dissipation},  we have to minimize 
    \begin{equation}\label{eq:functional_2}
        F(\alpha_i,\dots, \beta_i, \dots) = 4\sum_{i=1}^4\bigl(\alpha_i-\beta_i\bigl)+\sum_{i=1}^4\Bigl(\frac{\e}{\tempo}\frac{D_{j,i}}{\sqrt{2}}\frac{\beta_i(\beta_i+1)}{2}\Bigl)+\frac{\e}{\tempo}\sum_{i=1}^4\tilde{P}_{j,i}\frac{\alpha_i(\alpha_i+1)}{2}+e_\e.
    \end{equation}
    subject to the constraint $\alpha_i,\beta_i\in\NN\cup\{0\}$ and 
    \begin{equation*}
    %\label{eq:true_constraint}
        \xi = \#\zero_{j+1}-\#\zero_j = \som\frac{D_{j+1, i}-D_{j, i}}{\sqrt{2}\e} = \som\beta_i-2\alpha_i.
    \end{equation*}
    We first look for real minimizers using Lagrange multipliers. Neglecting the error term $e_\e$, we find the system
    \[\begin{cases}
        4+\tilde{P}_{j,i}\frac{2\alpha_i+1}{2\tempo} = -2\theta,\\
        -4+D_{j,i}\frac{2\beta_i+1}{2\sqrt{2}} = \theta,\\
        \sum_{i=1}^4\beta_i-2\alpha_i = \xi,
    \end{cases}\]
    for some $\theta\in\R$.
    By direct computation we have that the real solutions of the system are
    \begin{align*}
        \alpha_i = \frac{1}{\tilde{P}_i}\frac{4+4\sqrt{2}\tempo S_D-\xi}{\sqrt{2}S_D+4 S_P}-\frac{1}{2}+e_\e\;\;\text{ and }\;\;\beta_i = \frac{1}{D_i}\frac{\xi-2+8\tempo S_P}{S_D+2\sqrt{2} S_P}-\frac{1}{2}+e_\e,
    \end{align*}
    where we denoted by $e_\e$ two different error terms having the property that $|e_\e|\le c\e$ for a constant $c>0$.
    Finally \eqref{eq:sides_movement_when_set_remains_octagon} and \eqref{eq:sides_movement_when_set_remains_octagon_2} follow since we are looking for solutions in $\NN\cup\{0\}$.
\end{proof}

\begin{remark}\label{rem:distance_from_boundary}
From the systems of inclusions \eqref{eq:sides_movement_when_set_remains_octagon} and \eqref{eq:sides_movement_when_set_remains_octagon_2}, 
the displacement of each side exhibits a nonlocal behavior, i.e. it depends not only on the side itself, 
but also on all the other sides. In addition, the motion involves an averaging of the velocities, 
whereby, in a heuristic sense, longer sides undergo a shorter displacement than shorter ones.
Finally, we point out that from \eqref{eq:sides_movement_when_set_remains_octagon} and \eqref{eq:sides_movement_when_set_remains_octagon_2} it follows that there exists a constant $c$ depending only on $\bar{c}$ such that $\max\{\beta_{j, i}, \alpha_{j, i}:\:i=1, \dots, 4\}\le c$.
\end{remark}

\begin{example}\label{ex:dependence_initial_data}
    [Dependence from initial data] We prove the existence of minimizing movements that, starting from the second stage, evolve toward a configuration in which the surfactant completely surrounds the octagon, as well as the existence of minimizing movements that, again starting from the second stage, evolve ``back'' to the intermediate stage, where the surfactant covers only the diagonal sides. This transition depends on the interplay between the initial amount of surfactant and the side lengths of the quasi-octagon.

We take as initial datum an octagon $I_0$ such that $D_{0,i}=D_{0,j}$ and $P_{0,i}=P_{0,j}$ for every $i,j\in\{1,\dots,4\}$. Let $\eta>0$ be a small parameter, and set
\[
\lambda=\sum\Bigl(\frac{D_{0,i}}{\sqrt{2}}+P_{0,i}\Bigr)-\eta.
\]
By direct computation, we have that
\[
\#\partial^+ I_1=\#\partial^+ I_0-\sum \beta_{0,i}.
\]
In particular, after a finite number of steps we obtain $\#\partial I_j\le C_\varepsilon$, that is, $I_j$ is completely surrounded by surfactant. This inequality follows by choosing $D_{0,i}$ not too large (so that $\beta_{0,i}>0$ in view of~\eqref{eq:sides_movement_quasi_octagon_2}) and $\eta$ sufficiently small.

To show that, for suitable initial data, the evolution proceeds from the second stage to the intermediate stage, we instead choose
\[
\lambda=\sum \frac{D_{0,i}}{\sqrt{2}}+\eta,
\]
for some small $\eta>0$. By taking $P_{0,i}$ very large, it follows from~\eqref{eq:sides_movement_quasi_octagon} that $\alpha_{0,i}\le 1$ for all $i$. Moreover, by~\eqref{eq:sides_movement_quasi_octagon_2} we have
\[
\beta_{0,i}\ge \Bigl[\frac{\sqrt{2}\,\tempo(1+k)}{D_{0,i}}\Bigr]-1=:\Lambda.
\]
Therefore, by~\eqref{eq: Lung. lati},
\[
\frac{D_{1,i}}{\sqrt{2}\varepsilon}
=\frac{D_{0,i}}{\sqrt{2}\varepsilon}+\beta_{0,i}-\alpha_{0,i}-\alpha_{0,i+1}
\ge \frac{D_{0,i}}{\sqrt{2}\varepsilon}+\Lambda-2.
\]
Since the surfactant fills the set $\zero_j$ first, we infer from the previous inequality that, by choosing $D_{0,i}$ sufficiently small (and hence $\Lambda$ sufficiently large), after a finite number of steps condition~\eqref{eq:intermediate_situation} is satisfied, provided that $\eta$ is small enough.
\end{example}

\begin{theorem}
[Limit flow]\label{teo:minimizing_movements_surfactant_covering_more_than_diagonals}
Let $A\subset\rr^2$ be a not degenerate octagon, and for every $\e>0$ let $A_\e$ be an octagon such that $d_{\mathcal{H}}(A, A_\e)\to 0$ as $\e\to 0$. Let $\gamma<2$, $\lambda>0,\,C_\e$ be as in (\ref{eq:lambda}). Let $(u_0^\e)_\e$ be such that $u_0^\e\in\A_\e$ for every $\e$, $\I^\e_0= A_\e\cap\e\ZZ^2$ and $\#\Z^\e_0 = C_\e.$ Let $u^\e_j$ be a minimizing movement with initial datum $u^\e_0$. For every $t\ge 0$ we denote by $A_\e(t)$ the set 
\begin{equation*}
\label{eq:5_flow_bis}
A_\e(t) = \bigcup_{i\in\I^\e_{\lfloor t/\tau\rfloor}}Q_\e(i).
\end{equation*}
Then, it holds that $A_\e(t)$ converges as $\e\to 0$ locally uniformly in time to $A(t).$ The set $A(t)$ is an octagon for every $t\in [0, T)$ with
\begin{equation}\label{eq:T}
    T:=\sup\Biggl\{t\in\rr:\:\lambda<\som \Biggl(\frac{D_i(s)}{\sqrt{2}}+P_i(s)\Biggl)\;\text{ for every }\;s\le t\Biggl\},
\end{equation}
where we denoted by $\PP_i(t),$ and $\DD_i(t)$ its parallel and diagonal sides, and by $P_i(t),\,D_i(t)$ their lengths. Moreover, it holds $A(0) = A,$ and every side of $A(t)$ moves inwards remaining parallel to the side itself. The velocities $v_{\PP_i}(t),\,v_{\DD_i}(t)$ of the $i-$th side depend on the relation between the parameter $\lambda$ and the sum of the lengths of the diagonal sides of $A(t)$. More precisely:
\begin{itemize}
    \item[(i)] If $\lambda<\som \frac{D_i(t)}{\sqrt{2}}$ then $v_{\PP_i}(t)$ and $v_{\DD_i}(t)$ can be deduced from Proposition~\ref{prop:5.3_beginning_movement} by passing to the limit in \eqref{eq:movement_parallel_sides_not_covered}.
    %The limit velocities can be deduced as in \cite[Theorem 4.7]{CFS}.
    \item[(ii)] If $\som \frac{D_i(t)}{\sqrt{2}}<\lambda<\som \frac{D_i(t)}{\sqrt{2}}+P_i(t)$ then $v_{\PP_i}(t)$ and $v_{\DD_i}(t)$ can be deduced by Proposition~\ref{prop:5.21} by passing to the limit in \eqref{eq:sides_movement_quasi_octagon} and \eqref{eq:sides_movement_quasi_octagon_2}.
\end{itemize}
\end{theorem}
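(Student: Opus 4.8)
The strategy is the standard discrete-to-continuum scheme for minimizing movements, combined with the fine geometric description of the single time step obtained in the previous propositions. I would proceed in four steps.

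\textbf{Step 1: Uniform regularity along the flow.} The first task is to show that the hypothesis \eqref{H} propagates along the discrete flow, at least up to times close to $T$. Starting from $I^\e_0 = A_\e\cap\e\ZZ^2$, the set $I^\e_0$ is a discrete octagon whose parallel and diagonal sides have length comparable to those of $A$, hence bounded below by a fixed constant; moreover it is contained in a fixed ball. By Proposition~\ref{prop:connectedness} and the iterative application of Lemma~\ref{lemma:5_optimal_shape_not_surrounded}, for $\e$ small $I^\e_{j+1}$ is a quasi-octagon, contained in $I^\e_j$, with the parallel side lengths bounded below by $(1-k)\zeta/(c_\alpha+1)$ (estimate \eqref{eq:3.6}) and with $d_\hh(\partial A^\e_j,\partial A^\e_{j+1})\le c\e$ as long as the diagonals stay bounded below by $\bar c$. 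I would fix a time horizon $T'<T$ and show that, for $s\le T'$, the continuum sides $P_i(s), D_i(s)$ are bounded away from $0$ by a definition-of-$T$ argument (if one diagonal $D_i$ vanishes before $T$, the sum $\sum(D_i/\sqrt 2 + P_i)$ is continuous and would have to drop below $\lambda$ — which is exactly the content of $T$ only if a diagonal vanishing forces this; here one must be a bit careful and distinguish whether a diagonal may vanish while the full sum stays above $\lambda$, in which case one passes to Proposition~\ref{prop:minimizing_movement_null_diagonal}). The conclusion of Step 1 is: for every $T'<T$ there is $\bar c>0$ and $\bar\e>0$ such that \eqref{H} holds for $(I^\e_j)_j$ uniformly for $j\le \lfloor T'/\tau\rfloor$ and $\e\le\bar\e$.

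\textbf{Step 2: Compactness and identification of the limit side lengths.} By Step 1 the sets $A^\e(t)$ are equibounded quasi-octagons whose vertices move by $O(\e)$ at each time step of size $\tau=\zeta\e$, hence the maps $t\mapsto A^\e(t)$ are uniformly Lipschitz (in Hausdorff distance) on $[0,T']$; by Arzelà–Ascoli, up to subsequences, $A^\e(t)\to A(t)$ locally uniformly, with $A(t)$ an octagon, $A(0)=A$. The side lengths $P^\e_i(\lfloor t/\tau\rfloor)$, $D^\e_i(\lfloor t/\tau\rfloor)$ converge to Lipschitz functions $P_i(t), D_i(t)$. To identify the ODE system they satisfy, I would partition $[0,T']$ according to the three regimes: the set where $\lambda < \sum D_i(t)/\sqrt2$ (Stage one), where $\sum D_i(t)/\sqrt2 < \lambda < \sum(D_i/\sqrt2+P_i)$ splits further according to whether the strict inequalities defining Stage two (\eqref{eq:5.32}, equivalently the a priori bound $C_\e\ge\#\zero_{j+1}+2$) hold, or whether one is in the intermediate Stage three where $\sum D_{j,i}/(\sqrt2\e)\approx C_\e$. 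On each such (relatively open in time) regime, the discrete displacements $\alpha^\e_{j,i},\beta^\e_{j,i}$ are given up to $O(\e)$ (more precisely up to $O(\e^{1/2})$ near resonant values of $4\zeta/P_{j,i}$ etc.) by the explicit formulas \eqref{eq:movement_parallel_sides_not_covered}, \eqref{eq:sides_movement_quasi_octagon}–\eqref{eq:sides_movement_quasi_octagon_2}, \eqref{eq:sides_movement_when_set_remains_octagon}–\eqref{eq:sides_movement_when_set_remains_octagon_2}. Summing the per-step displacements over $\lfloor t/\tau\rfloor$ many steps and using $\tau=\zeta\e$, the telescoping relation \eqref{eq: Lung. lati} becomes, in the limit, a system of ODEs for $P_i(t), D_i(t)$ with right-hand sides given by passing to the limit in those formulas (the resonant-value set is Lebesgue-negligible in time since $P_i$ is monotone/Lipschitz, so it does not affect the integrated equation). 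Uniqueness of the limit ODE system then upgrades the convergence to the full sequence, not just subsequences.

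\textbf{Step 3: Consistency at regime transitions and conclusion.} One must check that the ODE system is well-posed across the boundaries between regimes, i.e. that the limiting velocities match (or that transitions occur at isolated times / are handled by Examples~\ref{ex:degenerate_octagon_evolution}, \ref{ex:surfactant_on_diagonals}, \ref{ex:dependence_initial_data}). The key point is that $T$ is precisely defined, in \eqref{eq:T}, as the first time the surfactant reserve $\lambda$ saturates $\sum(D_i/\sqrt2+P_i)$, i.e. the onset of complete wetting, beyond which the analysis of \cite{CFS} applies; so for $t<T$ one always has \eqref{eq:boundary_not_surrounded} satisfied at the discrete level for $\e$ small (since $\e C_\e\to\lambda$ and $\#\partial^+ I^\e_j = Per(A^\e_j)/\e - \#\zero_j \approx \e^{-1}\sum(D_i/\sqrt2+P_i) + $ lower order), which is what Lemma~\ref{lemma:5_optimal_shape_not_surrounded} and the three propositions require. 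Assembling Steps 1–2 over an exhausting sequence $T'\uparrow T$ gives local uniform convergence on $[0,T)$, and statements (i)–(ii) are exactly the read-off of the limit velocities in the two non-degenerate regimes from Propositions~\ref{prop:5.3_beginning_movement} and \ref{prop:5.21}.

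\textbf{Main obstacle.} The hard part is Step 2 in the intermediate/transition zone: controlling the bookkeeping of the surfactant cells (the identity $\#\zero_{j+1}=\sum D_{j+1,i}/(\sqrt2\e)$ of Remark~\ref{rem:3.4}, the $O(1)$ fluctuations of $\#\zero_j$ around $C_\e$) tightly enough to know which of the regimes \eqref{eq:6.23}, \eqref{eq:5.32}, \eqref{eq:intermediate_situation} the pair $(I^\e_j, C_\e)$ sits in at each step, and to show these regimes organize into time-intervals with negligible transition sets rather than oscillating at scale $\tau$. This is where the nonlocal coupling appears and where the resonant $\e^{1/2}$-neighborhoods in \eqref{eq:alpha_bar}, \eqref{eq:amax} must be shown not to accumulate; once this combinatorial control is in place, the passage to the limit in the explicit displacement formulas is routine.
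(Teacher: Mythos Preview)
Your approach is essentially that of the paper: propagate \eqref{H} via Proposition~\ref{prop:connectedness} and Lemma~\ref{lemma:5_optimal_shape_not_surrounded}, extract a uniform Lipschitz bound on the interpolated side lengths from Propositions~\ref{prop:5.3_beginning_movement}, \ref{prop:5.21}, \ref{prop:minimizing_movement_null_diagonal} and \ref{prop:sides_movement_when_set_remains_octagon}, pass to a compactness limit, and read off the velocities in the open regimes. Two corrections are worth making.

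First, the paper's proof contains one concrete geometric observation that your Step~1 gestures at but does not pin down: the exclusion of the configuration in which $I^\e_j$ is a genuine quasi-octagon (not an octagon) with a short diagonal, which is the one case none of the four propositions covers. The argument is a dichotomy. If the step is governed by Proposition~\ref{prop:5.21} or~\ref{prop:sides_movement_when_set_remains_octagon}, then the explicit formulas \eqref{eq:sides_movement_quasi_octagon_2} and \eqref{eq:sides_movement_when_set_remains_octagon_2} force $D^\e_{j,i}\to 0$ only when $\operatorname{diam}(A^\e_j)\to 0$, so no single diagonal collapses on its own. If instead the step is governed by Proposition~\ref{prop:5.3_beginning_movement}, then $I^\e_{j+1}$ is an octagon; once a diagonal shortens enough to violate~\eqref{eq:6.23}, Proposition~\ref{prop:minimizing_movement_null_diagonal}(ii) takes over and keeps $I^\e_{j+1}$ an octagon until the diagonals are long again. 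This is what closes the case analysis underlying your standing hypothesis $D_{j,i}\ge\bar c$, and you should state it explicitly.

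Second, your ``main obstacle'' is largely a red herring for the theorem \emph{as stated}. The statement only identifies velocities in the two strict regimes (i) and (ii); at the boundary $\lambda=\sum_i D_i(t)/\sqrt 2$ the paper explicitly declines to compute velocities (see the Remark immediately following the theorem). Hence you do not need to show that the discrete steps organize into non-oscillating macroscopic regimes. The uniform Lipschitz bound on side lengths holds regardless of which of the four propositions governs a given step, so compactness is immediate; and on any open time set where the strict inequality $\lambda<\sum_i D_i(t)/\sqrt 2$ (respectively $\lambda>\sum_i D_i(t)/\sqrt 2$) holds, uniform convergence of $D^\e_i(\cdot)$ forces the corresponding discrete condition~\eqref{eq:6.23} (respectively~\eqref{eq:5.32}) for $\e$ small, so Proposition~\ref{prop:5.3_beginning_movement} (respectively~\ref{prop:5.21}) applies there unambiguously and the limit velocity is identified. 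The delicate bookkeeping of Stage three is needed only to produce the Lipschitz bound in the transition zone, not to identify a velocity there.
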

\begin{proof}
    For the proof of this result we refer to \cite[Theorem 4.7]{CFS}. We only give a quick sketch.

    Let $\bar{c}>0$ be a positive constant, and let $\bar{\e}$ be given by Proposition~\ref{prop:connectedness}. 
    Then from Proposition~\ref{prop:connectedness} and Lemma~\ref{lemma:5_optimal_shape_not_surrounded}, we know that for every $\e\le\bar{\e}$ the set $\I^\e_{j+1}$ is a quasi-octagon contained into $\I^\e_j$, as long as \eqref{eq:boundary_not_surrounded} and \eqref{H} are satisfied.    
    Moreover, since $A$ is not degenerate, we can suppose that $D^\e_{0,i}>0$ for every $i=1,\dots,4$.
    
    Recursively repeating the argument above, we find sequences $D^\e_{j, i},\,P^\e_{j, i}$ and $\alpha^\e_{j, i},\,\beta^\e_{j, i}$ which satisfy \eqref{eq: Lung. lati}, i.e.  
 \begin{equation}
     \label{eq: Lung. lati_1}
       \begin{split}
           &P^\e_{j+1,i}=P^\e_{j,i}+2\e \alpha^\e_{j,i}-\e(\beta^\e_{j,i-1}+\beta^\e_{j,i+1})\\           &D^\e_{j+1,i}=D^\e_{j,i}+\sqrt{2}\e\beta^\e_{j,i}-\sqrt{2}\e(\alpha^\e_{j,i}+\alpha^\e_{j,i+1}).
       \end{split}
\end{equation}
For $i=1, \dots, 4$ we define $D_i^\e(t)$ as the linear interpolation in $[j\tau, (j+1)\tau]$ of the values $D^\e_{j,i}$ and $D^\e_{j+1,i}$. We define analogously $P_i^\e(t)$ as the linear interpolation in $[j\tau, (j+1)\tau]$ of the values $P^\e_{j,i}$ and $P^\e_{j+1,i}$. We observe that, in view of \eqref{eq: Lung. lati_1} and of Propositions \ref{prop:5.3_beginning_movement}, \ref{prop:5.21}, \ref{prop:minimizing_movement_null_diagonal} and \ref{prop:sides_movement_when_set_remains_octagon}, the functions $P_i^\e(t)$ and $D_i^\e(t)$ are equibounded and uniformly Lipschitz  on all intervals $[0, \bar{T}]$ such that $P_i^\e(t)\ge\bar{c}$. In particular they converge uniformly as $\e\to 0$ to Lipschitz functions $P_i(t)$ and $D_i(t).$ It follows that the octagons $A^\e(t)$ whose sides are $\PP^\e_i(t)$ and $\DD^\e_i(t)$ converge in the sense of Hausdorff (up to subsequences) as $\e\to 0$ to a limit octagon $A(t)$. 

To conclude, we only point out that in Propositions \ref{prop:5.3_beginning_movement}, \ref{prop:5.21}, \ref{prop:minimizing_movement_null_diagonal} and \ref{prop:sides_movement_when_set_remains_octagon} we did not describe the case in which $I_j$ is a quasi-octagon (which is not an octagon) having a ``short'' diagonal side. In fact, this situation cannot occur. The point is that at the starting step $u^\e_0$ all the diagonal sides $D^\e_{0, i}$ are long (since they are the discretisation of a non degenerate octagon). Then, in case the minimizing movement $u^\e_j$ evolves as in Proposition~\ref{prop:5.21} or Proposition~\ref{prop:sides_movement_when_set_remains_octagon}, the octagon $I_{j}$ is subject to \eqref{eq:sides_movement_quasi_octagon}, \eqref{eq:sides_movement_quasi_octagon_2} or to \eqref{eq:sides_movement_when_set_remains_octagon}, \eqref{eq:sides_movement_when_set_remains_octagon_2}, and in particular under these conditions it holds
\[D^\e_{j, i}\to 0\iff \text{diam}(A_j)\to 0,\]
(where we indicadet by $\text{diam}(A_j)$ the diameter of $A_j$),
so that in particular it cannot happen that a diagonal side is ``too short'' with respect to the parallel sides. If instead at the beginning the minimizing movement $u^\e_j$ evolves subject to Proposition~\ref{prop:5.3_beginning_movement} then as a result it remains an octagon at each step. If one of its diagonal sides becomes too short, then eventually its evolution will be governed by Proposition~\ref{prop:minimizing_movement_null_diagonal}, and in particular $I_j$ will remain an octagon until its diagonal sides become sufficiently long, in view of Proposition~\ref{prop:minimizing_movement_null_diagonal}-(ii).
\end{proof}

\begin{remark}
     For what concerns the case $\lambda = \som \frac{D_i(t)}{\sqrt{2}}$ the existence of the limit flow is still given by Theorem~\ref{teo:minimizing_movements_surfactant_covering_more_than_diagonals}, however we cannot compute the side velocities $v_{\PP_i}(t)$ and $v_{\DD_i}(t)$ (we only have rough estimates of them). We briefly explain the main issue. Let $\bar{t}$ such that $\lambda = \som\frac{D_i(\bar{t})}{\sqrt{2}}$. Then we know that $A_\e(\cdot)\to A(\cdot)$ locally uniformly, therefore in a neighborhood of $\bar{t}$ it holds 
     \begin{equation*}
     %\label{eq:approx}
        \som\frac{D^\e_i(\cdot)}{\sqrt{2}}\approx\lambda.
    \end{equation*}
    However this approximation is not sufficient to know the behavior of the approximating sequence $A_\e$ in a neighborhood of $\bar{t}$, i.e. the minimizing movements at $\e$-level might follow the results of Propositions \ref{prop:5.3_beginning_movement}, \ref{prop:5.21}, \ref{prop:minimizing_movement_null_diagonal} or \ref{prop:sides_movement_when_set_remains_octagon}. Therefore even for the limit flow we can only have rough estimates of the sides displacements based on the formulas provided by these four propositions.
\end{remark}

\begin{remark}
    The description of the minimizing movements in the case where the set $I_j$ is surrounded by surfactant is given in \cite[Section~5]{CFS}. It is straightforward to show that $A_\varepsilon(t)$ converges locally uniformly to a limit flow $A(t)$ also for times $t>T$ (where $T$ is defined in~\eqref{eq:T}), and that $A(t)$ is an octagon for every $t$. If we define
\begin{equation*}
  %\label{eq:T1}
  T_1:=\inf\Biggl\{t\in\rr:\:\lambda>\sum \Biggl(\frac{D_i(t)}{\sqrt{2}}+P_i(t)\Biggr)\Biggr\},
\end{equation*}
then, if $T_1<+\infty$, the limit flow for times $t>T_1$ is described by~\cite[Theorem~5.14]{CFS}, and it corresponds to the evolution of minimizing movements surrounded by surfactant. If instead the parameter $T$ defined in~\eqref{eq:T} satisfies $T<T_1$, we do not have a precise description of the limit flow $A(t)$ for $t\in[T,T_1]$. In this case, we can only derive estimates on the velocities of the sides by using the same strategies employed, for instance, in the proof of Lemma~\ref{lemma:5_optimal_shape_not_surrounded}-(ii)--(iv).
\end{remark}

\subsection{Partial wetting with negligible surfactant}\label{subsec:negligible}
In this subsection we deal with the special case $\lambda = 0$, i.e., we suppose that $\e C_\e\to 0$ as $\e\to0.$ We observe that by Proposition~\ref{prop:5.3_beginning_movement}, as long as  \[\sum_{i=1}^4\Biggl(\frac{D_{j, i}}{\sqrt{2}\e}-2c_\alpha\Biggl)\ge C_\e+2,\]  the minimizing movement is given by a sequence of octagons, and the surfactant set can be arranged along the diagonals of these octagons. After that, the evolution follows by Proposition~\ref{prop:minimizing_movement_null_diagonal}. In the following theorem we describe the limit flow in this special case.

\begin{theorem}\label{teo:negligible_movement}
    Let $A\subset\rr^2$ be a not degenerate octagon, and for every $\e>0$ let $A_\e$ be an octagon such that $d_{\mathcal{H}}(A, A_\e)\to 0$ as $\e\to 0$. Let $(u_0^\e)_\e$ be such that $u_0^\e\in\A_\e$ for every $\e$, and $\I^\e_0= A_\e\cap\e\ZZ^2.$ Suppose that $\gamma<2$, that $u^\e_j$ is a minimizing movement with initial datum $u^\e_0$, and let $C_\e:=\#\Z^\e_{0}$ be such that $\e C_\e\rightarrow 0$ as $\e\to 0$. For every $t\ge 0$ we denote by $A_\e(t)$ the set 
    \begin{equation*}
    A_\e(t) = \bigcup_{i\in\I^\e_{\lfloor t/\tau\rfloor}}Q_\e(i).
    \end{equation*}
    Then,  it holds that $A_\e(t)$ converges as $\e\to 0$ locally uniformly in time to $A(t).$ 
     $A(t)$ is an octagon for every $t\geq 0$
     such that $A(0) = A$; denoting by $\PP_i(t),\,\DD_i(t)$ its parallel and diagonal sides, and by $P_i(t),\,D_i(t)$ their lengths, then every side of $A(t)$ moves inwards remaining parallel to the side itself. The velocity $v_{\PP_i}(t)$ of the $i$-th parallel side satisfies the following differential inclusion \begin{equation}\label{eq:sides_velocity_negligible}
        v_{\PP_i}(t) \begin{cases}
             = \frac{1}{\tempo}\Bigl\lfloor \frac{4\tempo}{P_i(t)}\Bigl\rfloor&\text{ if }\frac{4\tempo}{P_i(t)}\not\in\NN, \\
             \in\Bigl[\frac{1}{\tempo}\Bigl(\frac{4\tempo}{P_i(t)}-1\Bigl),\,\frac{1}{\tempo}\frac{4\tempo}{P_i(t)}\Bigl]&\text{ otherwise}.
        \end{cases}
    \end{equation}
    As long as $D_i(t)>0$ the velocity of the side $\DD_i(t)$ is zero. If $D_i(\hat{t})=0$ for a certain $\hat{t}>0$, then $D_i(t)=0$ for every $t\ge\hat{t}$. 
\end{theorem}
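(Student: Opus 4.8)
The plan is to follow the same strategy used in the proof of Theorem~\ref{teo:minimizing_movements_surfactant_covering_more_than_diagonals}, specializing the four structural propositions of Section~\ref{sec:gamma<2} to the regime $\lambda=0$. First I would fix $\bar c>0$ and apply Proposition~\ref{prop:connectedness} together with Lemma~\ref{lemma:5_optimal_shape_not_surrounded} to guarantee that, for every $\e\le\bar\e$ and as long as \eqref{eq:boundary_not_surrounded} and assumption \eqref{H} hold, the set $\I^\e_{j+1}$ is a quasi-octagon contained in $\I^\e_j$ with side displacements controlled by $c_\alpha$. Since $A$ is non degenerate, one may assume $D^\e_{0,i}>0$ for all $i$. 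The key observation specific to $\lambda=0$ is that the hypothesis $\e C_\e\to 0$ forces \eqref{eq:6.23} to hold \emph{automatically} as long as all diagonals stay uniformly positive: indeed $\sum_i D_{j,i}/(\sqrt2\e)\to+\infty$ while $C_\e = o(1/\e)$. Hence the evolution is governed by Proposition~\ref{prop:5.3_beginning_movement} (stage one), which yields that $\I^\e_{j+1}$ is an octagon, that the diagonals are pinned (so $D^\e_{j+1,i}\subset D^\e_{j,i}$ and in particular $\beta^\e_{j,i}=0$), and that the parallel displacements satisfy \eqref{eq:movement_parallel_sides_not_covered}. Once some diagonal $D^\e_{j,\bar i}$ becomes short, \eqref{eq:6.23} may fail, but then one invokes Proposition~\ref{prop:minimizing_movement_null_diagonal}: since $\lambda=0$ one can take a vanishing sequence $\ell_\e\to 0$ (e.g. $\ell_\e = \max\{\e C_\e,\e^{1/2}\}$, or simply $\ell_\e$ tending to $0$ slower than $\e C_\e$) with $D^\e_{j,\bar i}\le\ell_\e$, so part~(iii) of that proposition applies, giving $I^\e_{j+1}$ still an octagon, $Z^\e_{j+1}\subset\zero^\e_{j+1}$, $\beta^\e_{j,i}=0$ on all diagonals that are not short, and the displacement law \eqref{eq:movement_parallel_sides_not_covered3} for the parallel sides.

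Next I would pass to the discrete-to-continuum limit exactly as in Theorem~\ref{teo:minimizing_movements_surfactant_covering_more_than_diagonals}. Recursively one builds sequences $P^\e_{j,i}$, $D^\e_{j,i}$, $\alpha^\e_{j,i}$, $\beta^\e_{j,i}$ satisfying the length relations \eqref{eq: Lung. lati}; by the uniform bounds on the displacements coming from Lemma~\ref{lemma:5_optimal_shape_not_surrounded}, Proposition~\ref{prop:5.3_beginning_movement} and Proposition~\ref{prop:minimizing_movement_null_diagonal}, the piecewise-linear interpolants $P^\e_i(t)$, $D^\e_i(t)$ are equibounded and uniformly Lipschitz on every interval $[0,\bar T]$ with $P^\e_i(t)\ge\bar c$. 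Hence, up to subsequences, they converge uniformly to Lipschitz limits $P_i(t)$, $D_i(t)$, the corresponding octagons $A^\e(t)$ converge locally uniformly in Hausdorff distance to an octagon $A(t)$ with $A(0)=A$, and every side moves inwards. Because in both regimes $\beta^\e_{j,i}=0$ for every diagonal of positive length, the limit gives $v_{\DD_i}(t)=0$ whenever $D_i(t)>0$; and if $D_i(\hat t)=0$, then once the side has closed, either Proposition~\ref{prop:5.3_beginning_movement}(ii) (pinning of the null diagonal) or Proposition~\ref{prop:minimizing_movement_null_diagonal}(iii) keeps $\beta^\e_{j,i}=0$ on a layer of that diagonal, so $D_i(t)=0$ for all $t\ge\hat t$. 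Finally, passing to the limit in \eqref{eq:movement_parallel_sides_not_covered} and \eqref{eq:movement_parallel_sides_not_covered3}, both of which converge to the same inclusion since $c\e,c\ell_\e\to 0$, yields \eqref{eq:sides_velocity_negligible}: away from the exceptional times where $4\tempo/P_i(t)\in\NN$ the discrete displacement equals $\lfloor 4\tempo/P^\e_{j,i}\rfloor$ and one passes to the limit directly; at a level-crossing time the two admissible integer values collapse, in the limit, to the closed interval $[\tfrac1\tempo(\tfrac{4\tempo}{P_i(t)}-1),\tfrac1\tempo\tfrac{4\tempo}{P_i(t)}]$.

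The main obstacle I expect is the \emph{matching of regimes at the crossover}, that is, justifying that one can always choose a single threshold sequence $\ell_\e\to 0$ consistent along the whole evolution so that, whenever some diagonal is short relative to the parallel sides, Proposition~\ref{prop:minimizing_movement_null_diagonal}(iii) genuinely applies (one needs $D^\e_{j,\bar i}\le\ell_\e$ \emph{and} $\ell_\e$ large enough that $\e C_\e$ is negligible with respect to it, and one needs the constant $c$ in \eqref{eq:movement_parallel_sides_not_covered3} to stay controlled). Here one uses crucially that along stage one the short diagonal is actually pinned at $0$ once it closes, so the only genuinely delicate transient is a bounded number of steps; as in the final paragraph of the proof of Theorem~\ref{teo:minimizing_movements_surfactant_covering_more_than_diagonals} one argues that, since the initial datum is a non degenerate octagon, a diagonal becomes short only when $\mathrm{diam}(A^\e_j)\to 0$ (if the evolution is in the stage-two/three regime) or it is truly pinned to $0$ (if the evolution is in stage one), so the case of a quasi-octagon with a short diagonal never arises and the whole analysis reduces to alternating between Proposition~\ref{prop:5.3_beginning_movement} and Proposition~\ref{prop:minimizing_movement_null_diagonal}. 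A secondary, purely bookkeeping point is checking that \eqref{eq:boundary_not_surrounded} persists along the flow: since $C_\e=o(1/\e)$ and $\#\partial^+\I^\e_j$ stays comparable to $1/\e$ as long as $P^\e_i\ge\bar c$, this is immediate, and the evolution never enters the fully-wetted regime of \cite{CFS} for these data.
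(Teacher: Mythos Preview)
Your approach is essentially the same as the paper's: reduce to Propositions~\ref{prop:5.3_beginning_movement} and~\ref{prop:minimizing_movement_null_diagonal}, then pass to the limit as in Theorem~\ref{teo:minimizing_movements_surfactant_covering_more_than_diagonals}. The one place where the paper is tighter than your sketch is the crossover, which you flag as the main obstacle: rather than choosing an ad hoc threshold $\ell_\e$ and treating the change of regime as a ``bounded transient'', the paper lets $\bar j$ be the \emph{first} index at which \eqref{eq:6.23} fails and reads off directly that $\sum_i D_{\bar j,i}\le \sqrt{2}\e C_\e+8\sqrt{2}c_\alpha\e+2\sqrt{2}\e=:\ell_\e$, so \emph{all} diagonals are $\le\ell_\e$ with $\ell_\e\to0$. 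Then Proposition~\ref{prop:minimizing_movement_null_diagonal}(iii) gives $Z^\e_{\bar j+1}\subset\zero^\e_{\bar j+1}$, and combining this with Lemma~\ref{lemma:5_optimal_shape_not_surrounded}(i) one gets the chain $C_\e=\#Z^\e_{\bar j+1}\le\#\zero^\e_{\bar j+1}\le\#\zero^\e_{\bar j}\le C_\e+8c_\alpha+2$, so hypothesis \eqref{eq:3.12} propagates inductively and the evolution stays in the Proposition~\ref{prop:minimizing_movement_null_diagonal} regime for \emph{every} $j\ge\bar j$ (not just finitely many steps). With this bookkeeping in place, the limit \eqref{eq:sides_velocity_negligible} follows exactly as you describe.
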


\begin{proof}
   The proof of this theorem is a direct consequence of Proposition~\ref{prop:5.3_beginning_movement} and Proposition~\ref{prop:minimizing_movement_null_diagonal}. Indeed consider a minimizing movement $(u^\e_j)_j$, and suppose that $\bar{j}$ is the first index such that \eqref{eq:6.23} does not hold. Then we deduce that 
   \[\som D_{\bar{j}, i}\le\sqrt{2}\e C_\e+8\sqrt{2}c_\alpha\e+2\sqrt{2}\e.\]
   In particular setting $\ell_\e:=\sqrt{2}\e C_\e+8\sqrt{2}c_\alpha\e+2\sqrt{2}\e$ we deduce that $\ell_\e\to 0$ as $\e\to 0$, and that $D_{\bar{j}, i}\le\ell_\e$ for every $i$. Therefore we deduce from Proposition~\ref{prop:minimizing_movement_null_diagonal}-(iii) that the side displacements $\alpha_{\bar{j}, i}$ verify \eqref{eq:movement_parallel_sides_not_covered3}. Moreover, since from \eqref{eq:3.12} we have $\#\zero_{\bar{j}}\ge C_\e$, and since $Z_{\bar{j}+1}\subset\zero_{\bar{j}+1}$, in view of Lemma~\ref{lemma:5_optimal_shape_not_surrounded}-(i) we deduce that 
   \[C_\e = \# Z_{\bar{j}+1}\le\#\zero_{\bar{j}+1}\le\#\zero_{\bar{j}}\le C_\e+8c_\alpha+2.\]
   In particular \eqref{eq:3.12} is verified even at step $\bar{j}+1$. Inductively we deduce that for every $j\ge\bar{j}$ the side displacements are given by \eqref{eq:movement_parallel_sides_not_covered3}. 

   To prove \eqref{eq:sides_velocity_negligible} we may pass to the limit in \eqref{eq:movement_parallel_sides_not_covered} and in \eqref{eq:movement_parallel_sides_not_covered3}, arguing as in the first part of the proof of Theorem~\ref{teo:minimizing_movements_surfactant_covering_more_than_diagonals}.
\end{proof}

\section{The case \texorpdfstring{$\gamma=2$}{gamma=2}}\label{sec:gamma=2}
In this section we prove that in the case $\gamma=2$ most of the times we can reduce the analysis of the minimizing movements to one of the situations which were analyzed before. Roughly speaking, the evolution depends on the amount of surfactant which is prescribed at time zero, and on the relation between the two parameters $k$ and $\tempo$. 
We introduce three critical cases:
\begin{align}
\tag{1}\label{situation:1} \tempo &> \frac{1}{4k}, \\
\tag{2}\label{situation:2} \tempo &> \frac{1}{3k - 1}, \\
\tag{3}\label{situation:3} \tempo &> \frac{1}{2 - 2k}.
\end{align}
The relevance of these three relations will be clear in what follows.\\

\noindent {\it Case \eqref{situation:1}}.
In this range of parameters if $\#\Z_{j+1}\ge\#\Z_j$, then it is convenient to perform the transition represented in Figure \ref{fig:1.6_1}.
More precisely, suppose that $\#Z_{j+1}\geq \#Z_j$, and that there exists $\overline{p} \in \zero_{j+1}$ such that $u_{j+1}(\overline{p})=-1$, as in Figure \ref{fig:1.6_1}. Then we can define the competitor $\tilde{u}_{j+1}$ replacing $u_{j+1}(\overline{p})\mapsto 0$. We get $\dis^1_\e(\tilde{u}_{j+1},u_j)=\dis^1_\e(u_{j+1},u_j)$ and, if (1) holds, $$\E_\e(\tilde{u}_{j+1})-\E_\e(u_j)+\frac{\e^2}{\tau}\left(\dis^0_\e(\tilde{u}_{j+1},u_j)-\dis^0_\e(u_{j+1},u_j)\right)\le4\e(1-k)-4\e+\frac{\e}{\tempo}<0.$$
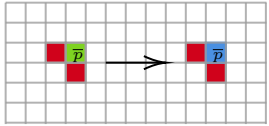
\begin{figure}[H]
    \centering
    \resizebox{0.25\textwidth}{!}{\tikzset{every picture/.style={line width=0.75pt}} %set default line width to 0.75pt        

\begin{tikzpicture}[x=0.75pt,y=0.75pt,yscale=-1,xscale=1]
%uncomment if require: \path (0,300); %set diagram left start at 0, and has height of 300

%Shape: Square [id:dp49282211994008596] 
\draw  [color={rgb, 255:red, 155; green, 155; blue, 155 }  ,draw opacity=0.77 ][fill={rgb, 255:red, 208; green, 2; blue, 27 }  ,fill opacity=1 ] (80.14,79.64) -- (90.14,79.64) -- (90.14,89.64) -- (80.14,89.64) -- cycle ;
%Shape: Square [id:dp04496996603274517] 
\draw  [color={rgb, 255:red, 155; green, 155; blue, 155 }  ,draw opacity=0.77 ][fill={rgb, 255:red, 208; green, 2; blue, 27 }  ,fill opacity=1 ] (90.14,89.64) -- (100.14,89.64) -- (100.14,99.64) -- (90.14,99.64) -- cycle ;
%Shape: Square [id:dp6523679283766235] 
\draw  [color={rgb, 255:red, 155; green, 155; blue, 155 }  ,draw opacity=0.77 ][fill={rgb, 255:red, 208; green, 2; blue, 27 }  ,fill opacity=1 ] (150.14,79.64) -- (160.14,79.64) -- (160.14,89.64) -- (150.14,89.64) -- cycle ;
%Shape: Square [id:dp3424391723982788] 
\draw  [color={rgb, 255:red, 155; green, 155; blue, 155 }  ,draw opacity=0.77 ][fill={rgb, 255:red, 208; green, 2; blue, 27 }  ,fill opacity=1 ] (160.14,89.64) -- (170.14,89.64) -- (170.14,99.64) -- (160.14,99.64) -- cycle ;
%Shape: Square [id:dp8246889280656712] 
\draw  [color={rgb, 255:red, 155; green, 155; blue, 155 }  ,draw opacity=0.77 ][fill={rgb, 255:red, 126; green, 211; blue, 33 }  ,fill opacity=1 ] (90.14,79.64) -- (100.14,79.64) -- (100.14,89.64) -- (90.14,89.64) -- cycle ;
%Shape: Square [id:dp77178099805307] 
\draw  [color={rgb, 255:red, 74; green, 144; blue, 226 }  ,draw opacity=1 ][fill={rgb, 255:red, 74; green, 144; blue, 226 }  ,fill opacity=1 ] (160.14,79.64) -- (170.14,79.64) -- (170.14,89.64) -- (160.14,89.64) -- cycle ;
%Shape: Grid [id:dp06912172177611164] 
\draw  [draw opacity=0] (60.14,59.64) -- (190.44,59.64) -- (190.44,120.33) -- (60.14,120.33) -- cycle ; \draw  [color={rgb, 255:red, 155; green, 155; blue, 155 }  ,draw opacity=0.77 ] (60.14,59.64) -- (60.14,120.33)(70.14,59.64) -- (70.14,120.33)(80.14,59.64) -- (80.14,120.33)(90.14,59.64) -- (90.14,120.33)(100.14,59.64) -- (100.14,120.33)(110.14,59.64) -- (110.14,120.33)(120.14,59.64) -- (120.14,120.33)(130.14,59.64) -- (130.14,120.33)(140.14,59.64) -- (140.14,120.33)(150.14,59.64) -- (150.14,120.33)(160.14,59.64) -- (160.14,120.33)(170.14,59.64) -- (170.14,120.33)(180.14,59.64) -- (180.14,120.33)(190.14,59.64) -- (190.14,120.33) ; \draw  [color={rgb, 255:red, 155; green, 155; blue, 155 }  ,draw opacity=0.77 ] (60.14,59.64) -- (190.44,59.64)(60.14,69.64) -- (190.44,69.64)(60.14,79.64) -- (190.44,79.64)(60.14,89.64) -- (190.44,89.64)(60.14,99.64) -- (190.44,99.64)(60.14,109.64) -- (190.44,109.64)(60.14,119.64) -- (190.44,119.64) ; \draw  [color={rgb, 255:red, 155; green, 155; blue, 155 }  ,draw opacity=0.77 ]  ;
%Straight Lines [id:da6775110332886272] 
\draw    (110.14,89.64) -- (138.14,89.64) ;
\draw [shift={(140.14,89.64)}, rotate = 180] [color={rgb, 255:red, 0; green, 0; blue, 0 }  ][line width=0.75]    (10.93,-3.29) .. controls (6.95,-1.4) and (3.31,-0.3) .. (0,0) .. controls (3.31,0.3) and (6.95,1.4) .. (10.93,3.29)   ;

% Text Node
\draw (162.14,81.04) node [anchor=north west][inner sep=0.75pt]  [font=\tiny]  {$\overline{p}$};
% Text Node
\draw (92.14,81.04) node [anchor=north west][inner sep=0.75pt]  [font=\tiny]  {$\overline{p}$};

\end{tikzpicture}}
    \caption{ Relation $(1)$ occurs. Assuming that $\#\Z_{j+1}\ge\#\Z_j$, and $\overline{p} \in \zero_{j+1}$ such that $u_{j+1}(\overline{p})=-1$, we perform the transformation that reduces the total energy.}
    \label{fig:1.6_1}
\end{figure}

\noindent {\it Case \eqref{situation:2}.} In this range of parameters if $\#\Z_{j+1}\ge\#\Z_j$, then it is convenient to perform the transition represented in Figure \ref{fig:1.6_2}.
More precisely, whenever there exists $\overline{p} \in \partial^+\I_{j+1}$ such that $u_{j+1}(\overline{p})=-1$, $\#\mathcal{N}(\overline{p})\cap I_{j+1}=1$ and $\#\mathcal{N}(\overline{p})\cap Z_{j+1}\geq 1$, the competitor $\tilde{u}_{j+1}$ obtained by replacing $u_{j+1}(\overline{p})\mapsto 0$ satisfies $\dis^1_\e(\tilde{u}_{j+1},u_j)=\dis^1_\e(u_{j+1},u_j)$ and, if (2) holds, then $$\E_\e(\tilde{u}_{j+1})-\E_\e(u_{j+1})+\frac{\e^2}{\tau}\left(\dis^0_\e(\tilde{u}_{j+1},u_j)-\dis^0_\e(u_{j+1},u_j)\right)\le3\e(1-k)-2\e+\frac{\e}{\tempo}<0.$$
  \begin{figure}[H]
    \centering
    \resizebox{0.25\textwidth}{!}{\tikzset{every picture/.style={line width=0.75pt}} %set default line width to 0.75pt        

\begin{tikzpicture}[x=0.75pt,y=0.75pt,yscale=-1,xscale=1]
%uncomment if require: \path (0,300); %set diagram left start at 0, and has height of 300

%Shape: Square [id:dp5331244249903103] 
\draw  [draw opacity=0][fill={rgb, 255:red, 74; green, 144; blue, 226 }  ,fill opacity=1 ] (350.33,80.03) -- (360.94,80.03) -- (360.94,90.64) -- (350.33,90.64) -- cycle ;
%Shape: Square [id:dp9402178594854411] 
\draw  [draw opacity=0][fill={rgb, 255:red, 74; green, 144; blue, 226 }  ,fill opacity=1 ] (350.53,70.33) -- (360.33,70.33) -- (360.33,80.14) -- (350.53,80.14) -- cycle ;
%Shape: Square [id:dp4375726906307048] 
\draw  [draw opacity=0][fill={rgb, 255:red, 74; green, 144; blue, 226 }  ,fill opacity=1 ] (280.53,70.53) -- (290.33,70.53) -- (290.33,80.33) -- (280.53,80.33) -- cycle ;
%Shape: Rectangle [id:dp5064827208958023] 
\draw  [color={rgb, 255:red, 208; green, 2; blue, 27 }  ,draw opacity=1 ][fill={rgb, 255:red, 208; green, 2; blue, 27 }  ,fill opacity=1 ] (280.33,70.33) -- (280.33,90.33) -- (270.33,90.33) -- (270.33,70.33) -- cycle ;
%Shape: Rectangle [id:dp8466564608135853] 
\draw  [color={rgb, 255:red, 208; green, 2; blue, 27 }  ,draw opacity=1 ][fill={rgb, 255:red, 208; green, 2; blue, 27 }  ,fill opacity=1 ] (350.33,70.33) -- (350.33,90.33) -- (340.33,90.33) -- (340.33,70.33) -- cycle ;
%Shape: Square [id:dp18536072287250427] 
\draw  [draw opacity=0][fill={rgb, 255:red, 126; green, 211; blue, 33 }  ,fill opacity=1 ] (280.33,80.72) -- (290.14,80.72) -- (290.14,90.53) -- (280.33,90.53) -- cycle ;
%Shape: Grid [id:dp718866937876324] 
\draw  [draw opacity=0] (250.33,100.33) -- (380.44,100.33) -- (380.44,60.33) -- (250.33,60.33) -- cycle ; \draw  [color={rgb, 255:red, 155; green, 155; blue, 155 }  ,draw opacity=0.77 ] (250.33,100.33) -- (250.33,60.33)(260.33,100.33) -- (260.33,60.33)(270.33,100.33) -- (270.33,60.33)(280.33,100.33) -- (280.33,60.33)(290.33,100.33) -- (290.33,60.33)(300.33,100.33) -- (300.33,60.33)(310.33,100.33) -- (310.33,60.33)(320.33,100.33) -- (320.33,60.33)(330.33,100.33) -- (330.33,60.33)(340.33,100.33) -- (340.33,60.33)(350.33,100.33) -- (350.33,60.33)(360.33,100.33) -- (360.33,60.33)(370.33,100.33) -- (370.33,60.33)(380.33,100.33) -- (380.33,60.33) ; \draw  [color={rgb, 255:red, 155; green, 155; blue, 155 }  ,draw opacity=0.77 ] (250.33,100.33) -- (380.44,100.33)(250.33,90.33) -- (380.44,90.33)(250.33,80.33) -- (380.44,80.33)(250.33,70.33) -- (380.44,70.33)(250.33,60.33) -- (380.44,60.33) ; \draw  [color={rgb, 255:red, 155; green, 155; blue, 155 }  ,draw opacity=0.77 ]  ;
%Straight Lines [id:da516044864168483] 
\draw    (300.33,80.33) -- (328.33,80.33) ;
\draw [shift={(330.33,80.33)}, rotate = 180] [color={rgb, 255:red, 0; green, 0; blue, 0 }  ][line width=0.75]    (10.93,-3.29) .. controls (6.95,-1.4) and (3.31,-0.3) .. (0,0) .. controls (3.31,0.3) and (6.95,1.4) .. (10.93,3.29)   ;

% Text Node
\draw (281.33,80.73) node [anchor=north west][inner sep=0.75pt]  [font=\tiny]  {$\overline{p}$};
% Text Node
\draw (351.33,80.43) node [anchor=north west][inner sep=0.75pt]  [font=\tiny]  {$\overline{p}$};

\end{tikzpicture}}
    \caption{ 
     Relation $(2)$ occurs. Assuming that $\#\Z_{j+1}\ge\#\Z_j$, and $\overline{p} \in \partial^+\I_{j+1}$ such that $u_{j+1}(\overline{p})=-1$, $\#\mathcal{N}(\overline{p})\cap I_{j+1}=1$ and $\#\mathcal{N}(\overline{p})\cap Z_{j+1}\geq 1$ we perform the transformation that reduces the total energy.}
    \label{fig:1.6_2}
\end{figure}
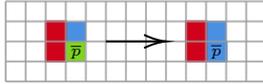

\noindent {\it Case \eqref{situation:3}.}
Finally if $\#Z_{j+1}\leq \#Z_j$, then in this range of parameters it is convenient to perform the two transitions represented in Figure \ref{fig:1.6_3}. 
Indeed, let $\mu<1/4$, and suppose that there exists $\overline{p}$ which is a surfactant in the corner according to Definition~\ref{def: corner unit}. Then, from Proposition~\ref{prop:connectedness} both the competitor $\tilde{u}_{j+1}$ (that we do not rename) obtained by replacing $u_{j+1}(\overline{p})\mapsto 1$, when $\#(\nn(\overline{p})\cap \{u_{j+1}=1\})=2$ or the competitor $\tilde{u}_{j+1}$ obtained by replacing $u_{j+1}\mapsto -1$ when $\#(\nn(\overline{p})\cap \{u_{j+1}=-1\})=2$,  satisfies \[\frac{1}{\tau}\Bigl|\dis^{1}_\e(\tilde{u}_{j+1},u_j)-\dis^1_\e(u_{j+1},u_j)\Bigl|\le c\e^{1+\mu},\] and, if (3) holds, then
\begin{equation*}
%\label{eq:(3)_vale_esempio}
    \E_\e(\tilde{u}_1)-\E_\e(u_1)+\frac{\e^2}{\tau}\left(\dis^0_\e(\tilde{u}_1,u_0)-\dis^0_\e(u_1,u_0)\right)=-2\e(1-k)+\frac{\e}{\tempo}<0.
\end{equation*}

 \begin{figure}[H]
    \centering
    \resizebox{0.25\textwidth}{!}{\tikzset{every picture/.style={line width=0.75pt}} %set default line width to 0.75pt        

\begin{tikzpicture}[x=0.75pt,y=0.75pt,yscale=-1,xscale=1]
%uncomment if require: \path (0,300); %set diagram left start at 0, and has height of 300

%Shape: Rectangle [id:dp34187914294668487] 
\draw  [color={rgb, 255:red, 126; green, 211; blue, 33 }  ,draw opacity=1 ][fill={rgb, 255:red, 126; green, 211; blue, 33 }  ,fill opacity=1 ] (470.14,119.64) -- (470.14,129.64) -- (460.14,129.64) -- (460.14,119.64) -- cycle ;
%Shape: Rectangle [id:dp735376119670492] 
\draw  [color={rgb, 255:red, 126; green, 211; blue, 33 }  ,draw opacity=1 ][fill={rgb, 255:red, 126; green, 211; blue, 33 }  ,fill opacity=1 ] (550.14,119.64) -- (550.14,129.64) -- (540.14,129.64) -- (540.14,119.64) -- cycle ;
%Shape: Rectangle [id:dp4047126231593191] 
\draw  [color={rgb, 255:red, 126; green, 211; blue, 33 }  ,draw opacity=1 ][fill={rgb, 255:red, 126; green, 211; blue, 33 }  ,fill opacity=1 ] (470.14,129.64) -- (480.14,129.64) -- (480.14,139.64) -- (470.14,139.64) -- cycle ;
%Shape: Rectangle [id:dp7484188546708178] 
\draw  [color={rgb, 255:red, 126; green, 211; blue, 33 }  ,draw opacity=1 ][fill={rgb, 255:red, 126; green, 211; blue, 33 }  ,fill opacity=1 ] (550.14,129.64) -- (560.14,129.64) -- (560.14,139.64) -- (550.14,139.64) -- cycle ;
%Shape: Square [id:dp5260408365460254] 
\draw  [color={rgb, 255:red, 74; green, 144; blue, 226 }  ,draw opacity=1 ][fill={rgb, 255:red, 74; green, 144; blue, 226 }  ,fill opacity=1 ] (470.14,109.64) -- (480.14,109.64) -- (480.14,119.64) -- (470.14,119.64) -- cycle ;
%Shape: Square [id:dp5241700108215475] 
\draw  [color={rgb, 255:red, 74; green, 144; blue, 226 }  ,draw opacity=1 ][fill={rgb, 255:red, 74; green, 144; blue, 226 }  ,fill opacity=1 ] (470.14,119.64) -- (480.14,119.64) -- (480.14,129.64) -- (470.14,129.64) -- cycle ;
%Shape: Square [id:dp2898936619401782] 
\draw  [color={rgb, 255:red, 74; green, 144; blue, 226 }  ,draw opacity=1 ][fill={rgb, 255:red, 74; green, 144; blue, 226 }  ,fill opacity=1 ] (480.14,119.64) -- (490.14,119.64) -- (490.14,129.64) -- (480.14,129.64) -- cycle ;
%Shape: Square [id:dp6526438496470431] 
\draw  [color={rgb, 255:red, 155; green, 155; blue, 155 }  ,draw opacity=0.77 ][fill={rgb, 255:red, 74; green, 144; blue, 226 }  ,fill opacity=1 ] (550.14,109.64) -- (560.14,109.64) -- (560.14,119.64) -- (550.14,119.64) -- cycle ;
%Shape: Square [id:dp23412702548759035] 
\draw  [color={rgb, 255:red, 155; green, 155; blue, 155 }  ,draw opacity=0.77 ][fill={rgb, 255:red, 74; green, 144; blue, 226 }  ,fill opacity=1 ] (560.14,119.64) -- (570.14,119.64) -- (570.14,129.64) -- (560.14,129.64) -- cycle ;
%Shape: Square [id:dp6961326816992491] 
\draw  [color={rgb, 255:red, 126; green, 211; blue, 33 }  ,draw opacity=1 ][fill={rgb, 255:red, 126; green, 211; blue, 33 }  ,fill opacity=1 ] (550.14,119.64) -- (560.14,119.64) -- (560.14,129.64) -- (550.14,129.64) -- cycle ;
%Shape: Rectangle [id:dp15504165665143732] 
\draw  [color={rgb, 255:red, 208; green, 2; blue, 27 }  ,draw opacity=1 ][fill={rgb, 255:red, 208; green, 2; blue, 27 }  ,fill opacity=1 ] (470.14,79.64) -- (470.14,89.64) -- (460.14,89.64) -- (460.14,79.64) -- cycle ;
%Shape: Rectangle [id:dp3498688694458303] 
\draw  [color={rgb, 255:red, 208; green, 2; blue, 27 }  ,draw opacity=1 ][fill={rgb, 255:red, 208; green, 2; blue, 27 }  ,fill opacity=1 ] (550.14,79.64) -- (550.14,89.64) -- (540.14,89.64) -- (540.14,79.64) -- cycle ;
%Shape: Rectangle [id:dp8324404560173703] 
\draw  [color={rgb, 255:red, 208; green, 2; blue, 27 }  ,draw opacity=1 ][fill={rgb, 255:red, 208; green, 2; blue, 27 }  ,fill opacity=1 ] (470.14,89.64) -- (480.14,89.64) -- (480.14,99.64) -- (470.14,99.64) -- cycle ;
%Shape: Rectangle [id:dp49991654764938676] 
\draw  [color={rgb, 255:red, 208; green, 2; blue, 27 }  ,draw opacity=1 ][fill={rgb, 255:red, 208; green, 2; blue, 27 }  ,fill opacity=1 ] (550.14,89.64) -- (560.14,89.64) -- (560.14,99.64) -- (550.14,99.64) -- cycle ;
%Shape: Square [id:dp18868000557511455] 
\draw  [color={rgb, 255:red, 74; green, 144; blue, 226 }  ,draw opacity=1 ][fill={rgb, 255:red, 74; green, 144; blue, 226 }  ,fill opacity=1 ] (470.14,69.64) -- (480.14,69.64) -- (480.14,79.64) -- (470.14,79.64) -- cycle ;
%Shape: Square [id:dp8042235981382362] 
\draw  [color={rgb, 255:red, 74; green, 144; blue, 226 }  ,draw opacity=1 ][fill={rgb, 255:red, 74; green, 144; blue, 226 }  ,fill opacity=1 ] (470.14,79.64) -- (480.14,79.64) -- (480.14,89.64) -- (470.14,89.64) -- cycle ;
%Shape: Square [id:dp906323323798439] 
\draw  [color={rgb, 255:red, 74; green, 144; blue, 226 }  ,draw opacity=1 ][fill={rgb, 255:red, 74; green, 144; blue, 226 }  ,fill opacity=1 ] (480.14,79.64) -- (490.14,79.64) -- (490.14,89.64) -- (480.14,89.64) -- cycle ;
%Shape: Square [id:dp6279099608581257] 
\draw  [color={rgb, 255:red, 155; green, 155; blue, 155 }  ,draw opacity=0.77 ][fill={rgb, 255:red, 74; green, 144; blue, 226 }  ,fill opacity=1 ] (550.14,69.64) -- (560.14,69.64) -- (560.14,79.64) -- (550.14,79.64) -- cycle ;
%Shape: Square [id:dp15512948438992802] 
\draw  [color={rgb, 255:red, 155; green, 155; blue, 155 }  ,draw opacity=0.77 ][fill={rgb, 255:red, 74; green, 144; blue, 226 }  ,fill opacity=1 ] (560.14,79.64) -- (570.14,79.64) -- (570.14,89.64) -- (560.14,89.64) -- cycle ;
%Shape: Square [id:dp23346122950257442] 
\draw  [color={rgb, 255:red, 155; green, 155; blue, 155 }  ,draw opacity=0.77 ][fill={rgb, 255:red, 208; green, 2; blue, 27 }  ,fill opacity=1 ] (550.14,79.64) -- (560.14,79.64) -- (560.14,89.64) -- (550.14,89.64) -- cycle ;
%Shape: Grid [id:dp3156175970776973] 
\draw  [draw opacity=0] (440.14,59.64) -- (590.33,59.64) -- (590.33,150) -- (440.14,150) -- cycle ; \draw  [color={rgb, 255:red, 155; green, 155; blue, 155 }  ,draw opacity=0.77 ] (440.14,59.64) -- (440.14,150)(450.14,59.64) -- (450.14,150)(460.14,59.64) -- (460.14,150)(470.14,59.64) -- (470.14,150)(480.14,59.64) -- (480.14,150)(490.14,59.64) -- (490.14,150)(500.14,59.64) -- (500.14,150)(510.14,59.64) -- (510.14,150)(520.14,59.64) -- (520.14,150)(530.14,59.64) -- (530.14,150)(540.14,59.64) -- (540.14,150)(550.14,59.64) -- (550.14,150)(560.14,59.64) -- (560.14,150)(570.14,59.64) -- (570.14,150)(580.14,59.64) -- (580.14,150)(590.14,59.64) -- (590.14,150) ; \draw  [color={rgb, 255:red, 155; green, 155; blue, 155 }  ,draw opacity=0.77 ] (440.14,59.64) -- (590.33,59.64)(440.14,69.64) -- (590.33,69.64)(440.14,79.64) -- (590.33,79.64)(440.14,89.64) -- (590.33,89.64)(440.14,99.64) -- (590.33,99.64)(440.14,109.64) -- (590.33,109.64)(440.14,119.64) -- (590.33,119.64)(440.14,129.64) -- (590.33,129.64)(440.14,139.64) -- (590.33,139.64)(440.14,149.64) -- (590.33,149.64) ; \draw  [color={rgb, 255:red, 155; green, 155; blue, 155 }  ,draw opacity=0.77 ]  ;
%Straight Lines [id:da3260349298352452] 
\draw    (500.14,89.64) -- (528.14,89.64) ;
\draw [shift={(530.14,89.64)}, rotate = 180] [color={rgb, 255:red, 0; green, 0; blue, 0 }  ][line width=0.75]    (10.93,-3.29) .. controls (6.95,-1.4) and (3.31,-0.3) .. (0,0) .. controls (3.31,0.3) and (6.95,1.4) .. (10.93,3.29)   ;
%Straight Lines [id:da43266408772664966] 
\draw    (500.14,129.64) -- (528.14,129.64) ;
\draw [shift={(530.14,129.64)}, rotate = 180] [color={rgb, 255:red, 0; green, 0; blue, 0 }  ][line width=0.75]    (10.93,-3.29) .. controls (6.95,-1.4) and (3.31,-0.3) .. (0,0) .. controls (3.31,0.3) and (6.95,1.4) .. (10.93,3.29)   ;

% Text Node
\draw (471.14,121.04) node [anchor=north west][inner sep=0.75pt]  [font=\tiny]  {$\overline{p}$};
% Text Node
\draw (551.14,121.04) node [anchor=north west][inner sep=0.75pt]  [font=\tiny]  {$\overline{p}$};
% Text Node
\draw (471.14,81.04) node [anchor=north west][inner sep=0.75pt]  [font=\tiny]  {$\overline{p}$};
% Text Node
\draw (552.14,81.04) node [anchor=north west][inner sep=0.75pt]  [font=\tiny]  {$\overline{p}$};

\end{tikzpicture}}
    \caption{ 
     Relation $(3)$ occurs. We assume that $\#\Z_{j+1}\le\#\Z_j$ and on the left side of the figure we have  $\overline{p}$ surfactant in the corner. On the right side we represent the transformation that reduces the total energy of the system.}
    \label{fig:1.6_3}
\end{figure}
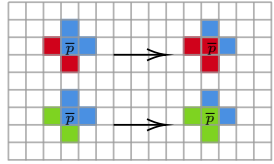

\noindent{\bf The effect of \eqref{situation:3} with initial data surrounded by surfactant.} In this paragraph we discuss the minimizing movements under the assumption that the initial data verify $\#\partial^+I_0\le\# Z_0$. The evolution depends only on whether \eqref{situation:3} holds or not.

\begin{lemma}\label{lemma: il minimizzante circondato da surfactant}
    Let $\gamma = 2$, let $u_j\in\A_\e$ be satisfying assumption \eqref{H} of Subsection \ref{subsub_H}, and let $u_{j+1}$ be a minimizer of $\enF(\cdot, u_j)$. Suppose that $\partial^+I_j\subset Z_j$. Then we have the following.
    \begin{itemize}
        \item[(i)] If $\displaystyle\tempo<\frac{1}{2-2k}$ (i.e. \eqref{situation:3} does not hold ``strictly'') then there exists $\bar{\e}$ such that for every $\e\le\bar{\e}$ it holds $\# Z_{j+1} = \# Z_j.$ In particular \cite[Theorem 5.14]{CFS} holds even for $\gamma=2$.
        \item[(ii)]If $\displaystyle\tempo\ge\frac{1}{2-2k}$ (i.e. \eqref{situation:3} is verified ``weakly'') and if $I_j$ is an octagon, then there exists $\bar{\e}$ such that for every $\e\le\bar{\e}$ the set $I_{j+1}$ is an octagon. Moreover,
        if $\zeta >\frac{1}{2-2k}$ then $Z_{j+1} = \partial^+I_{j+1};$ if instead $\zeta = \frac{1}{2-2k}$ then for every minimizer $u_{j+1}$ there exists an other minimizer $\tilde{u}_{j+1}$ such that 
        \begin{equation}\label{eq:4.2}
            \{\tilde{u}_{j+1}=1\} = I_{j+1},\text{ and }\{\tilde{u}_{j+1}=0\} = \partial^+I_{j+1.}
        \end{equation}
    \end{itemize}
\end{lemma}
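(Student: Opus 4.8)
The plan is to treat the two cases of Lemma~\ref{lemma: il minimizzante circondato da surfactant} in parallel, exploiting in both a \emph{competitor argument} built from the ``surfactant in the corner'' transition described in the discussion of case~\eqref{situation:3}. Throughout, by Proposition~\ref{prop:connectedness}, for $\e$ small the minimizer $I_{j+1}$ is a connected staircase set contained in $I_j$ with $d_\hh(\partial A_j,\partial A_{j+1})\le\e^\mu$; by Remark~\ref{rem:blu_on_the_boundary} we have either $\partial^+I_{j+1}\subset Z_{j+1}$ or $Z_{j+1}\subset\partial^+I_{j+1}$.

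For part~(i), the key observation is that when $\tempo<\tfrac{1}{2-2k}$ it is never convenient for the scheme to \emph{lose} surfactant: if $\#Z_{j+1}<\#Z_j$, I would produce a competitor that keeps one extra unit of surfactant at a cheaper energy balance. First I would show that the minimality of $u_{j+1}$ forces $\partial^+I_{j+1}\subset Z_{j+1}$, i.e.\ the surfactant still surrounds the set; this uses that $\partial^+ I_j\subset Z_j$ together with the inclusion $I_{j+1}\subset I_j$ and Lemma~\ref{lemma:surfactant_placement}, essentially because removing surfactant from the (already wetted) outer layer would, for $\gamma=2$, cost $2(1-k)\e>\tfrac{\e}{\tempo}$ in the balance $\E_\e$ versus $\tfrac{\e^2}{\tau}\dis^0_\e$. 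Once the boundary is wetted, I would argue that any surfactant particle sitting strictly outside $\partial^+ I_{j+1}$ (a ``surfactant in the corner'' in the sense of Definition~\ref{def: corner unit}, or any isolated extra cell) can be turned into a phase-$\pm1$ cell glued to $I_{j+1}$ at a strict gain, as in Figure~\ref{fig:1.6_3}: the energy drops by $2(1-k)\e$ minus a dissipation error $c\e^{1+\mu}$ while $\tfrac{\e^2}{\tau}\dis^0_\e$ increases by $\tfrac{\e}{\tempo}$, and $2(1-k)-\tfrac{1}{\tempo}<0$ contradicts minimality. Iterating, $\#Z_{j+1}$ cannot strictly decrease; combined with Lemma~\ref{lemma: surfactant costante}-type reasoning (or directly: it cannot strictly increase either, by the transitions of Figures~\ref{fig:1.6_1}--\ref{fig:1.6_2} being irrelevant here since $\#Z_{j+1}\le\#Z_j$ is what we rule out and $\#Z_{j+1}\ge\#Z_j$ would be handled by the wetted-boundary constraint $Z_{j+1}\subset\partial^+I_{j+1}\cup\{\text{corners}\}$), we get $\#Z_{j+1}=\#Z_j$. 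The last clause, that \cite[Theorem 5.14]{CFS} still applies, follows because once $\#Z_{j+1}=\#Z_j$ and $\partial^+I_{j+1}\subset Z_{j+1}$ the configuration is exactly of the type analyzed there.

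For part~(ii), suppose $\tempo\ge\tfrac{1}{2-2k}$ and $I_j$ is an octagon. I would first show $I_{j+1}$ is an octagon: by Lemma~\ref{lemma:5_optimal_shape_not_surrounded} (applicable since $I_j$ is a fortiori a quasi-octagon) the minimizer is a quasi-octagon, and I would rule out genuine ``defects'' $\PP^\pm_{j+1,i}$ exactly as in the proof of Proposition~\ref{prop:minimizing_movement_null_diagonal}-(ii): a defect cell forces, via Lemma~\ref{lemma:shape_optimality}, a local configuration on which the competitor of Figure~\ref{fig:quasioct not stable} strictly lowers $\enF$ when $\tempo>\tfrac{1}{8c_\alpha+1}\cdot\tfrac{1}{2-2k}$—but here the diagonals are long (no short-diagonal obstruction since $I_j$ is a non-degenerate-type octagon we may assume has $D_{j,i}\ge\bar c$, or else the argument is local and needs only $\tempo\ge\tfrac{1}{2-2k}$ together with a single-cell competitor). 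Then, for the surfactant placement: if $\tempo>\tfrac{1}{2-2k}$, any surfactant cell not in $\partial^+I_{j+1}$ yields a strict gain via the Figure~\ref{fig:1.6_3} move ($2(1-k)-\tfrac1\tempo>0$), while $Z_{j+1}\subset\partial^+I_{j+1}$ (Remark~\ref{rem:blu_on_the_boundary}, using $\#Z_0\ge\#\partial^+I_0$ propagated to step $j$) together with $\#Z_{j+1}=\#\partial^+I_{j+1}$ (which I'd get by showing the surfactant mass is conserved as in part (i), now $\#Z_j=\#\partial^+I_j$ and the count $\#\partial^+I_{j+1}$ is determined by the octagon geometry) forces $Z_{j+1}=\partial^+I_{j+1}$. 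In the borderline case $\tempo=\tfrac{1}{2-2k}$ the Figure~\ref{fig:1.6_3} move is energy-neutral up to $o(\e)$, so it produces not a contradiction but an \emph{alternative minimizer}: starting from any minimizer $u_{j+1}$, repeatedly applying the neutral corner move relocates all surfactant onto $\partial^+I_{j+1}$, yielding $\tilde u_{j+1}$ with $\{\tilde u_{j+1}=1\}=I_{j+1}$ and $\{\tilde u_{j+1}=0\}=\partial^+I_{j+1}$, which is \eqref{eq:4.2}; one must check the move's dissipation error $c\e^{1+\mu}$ is genuinely lower order than the $\e$-scale of the energy difference, so that exact neutrality at the continuum level is not spoiled at finite $\e$—this is where the $d_\hh\le\e^\mu$ bound of Proposition~\ref{prop:connectedness} is used.

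The main obstacle I anticipate is the borderline sub-case $\tempo=\tfrac{1}{2-2k}$ of part~(ii): the corner move is only \emph{asymptotically} neutral, and one must verify that the $O(\e^{1+\mu})$ dissipation discrepancy incurred each time a surfactant particle is relocated does not accumulate into a strict sign when many particles (order $\#\partial^+I_{j+1}\sim \bar R/\e$ of them) are moved. The clean way around this is not to relocate one particle at a time but to compare $u_{j+1}$ directly with the single configuration $\tilde u_{j+1}$ having $\{\tilde u_{j+1}=0\}=\partial^+I_{j+1}$ and the same $I_{j+1}$: then $\E_\e(\tilde u_{j+1})=\E_\e(u_{j+1})$ exactly (same $I_{j+1}$, same number of surfactant cells all on the boundary where each contributes the same $2(1-k)\e-2\e$ energy shift relative to a phase cell), $\dis^0_\e$ is unchanged, and one only needs $|\dis^1_\e(\tilde u_{j+1},u_j)-\dis^1_\e(u_{j+1},u_j)|$ to be controlled—but in fact $\dis^1_\e$ depends only on $I_{j+1}$, hence is \emph{identical}, so $\enF(\tilde u_{j+1},u_j)=\enF(u_{j+1},u_j)$ and $\tilde u_{j+1}$ is a minimizer with no error term at all. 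This side-steps the accumulation issue entirely, provided one has already established in the first part of (ii) that $\#Z_{j+1}=\#\partial^+I_{j+1}$ so that the surfactant exactly fits on the boundary layer.
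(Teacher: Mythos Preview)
Your proposal has the right overall shape but contains two concrete errors that would make the argument fail.

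\textbf{Part (i): the competitor goes the wrong direction.} You propose, under $\#Z_{j+1}<\#Z_j$, to take a surfactant cell outside $\partial^+I_{j+1}$ and turn it into $\pm1$, claiming ``$2(1-k)-\tfrac{1}{\tempo}<0$ contradicts minimality''. But removing surfactant when $\#Z_{j+1}<\#Z_j$ makes $\dis^0$ \emph{increase}, so the net change is $-2(1-k)\e+\e/\tempo$, which is \emph{positive} in the regime $\tempo<\tfrac{1}{2-2k}$: no contradiction. The paper first invokes \cite[Lemma~5.4]{CFS} to get $\#Z_{j+1}\le\#Z_j$ and $\partial^+I_{j+1}\subset Z_{j+1}$ for free, then assumes $\#Z_{j+1}<\#Z_j$ and picks $\bar p\in\one_{j+1}$ (a phase-$1$ corner of $I_{j+1}$, whose two exterior neighbours are already surfactant) and \emph{adds} surfactant there: $u_{j+1}(\bar p)\mapsto 0$. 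This costs $+2(1-k)\e$ in energy but gains $\e/\tempo$ in the $\dis^0$ term, and $2(1-k)\e-\e/\tempo<0$ gives the contradiction.

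\textbf{Part (ii): wrong lemma for the octagon step, and a misplaced worry.} You invoke Lemma~\ref{lemma:5_optimal_shape_not_surrounded} to get that $I_{j+1}$ is a quasi-octagon, but that lemma requires the not-surrounded hypothesis~\eqref{eq:boundary_not_surrounded}, which fails here since $\partial^+I_j\subset Z_j$. The paper instead first reduces (via the Figure~\ref{fig:1.6_3} move, strict or neutral) to $Z_{j+1}=\partial^+I_{j+1}$, and then shows $I_{j+1}$ is an octagon directly from Lemma~\ref{lemma:shape_optimality}: a defect pair on $\partial^-I_{j+1}$ would force one of cases (i)--(iii) there, but (i) fails since $I_j$ is an octagon, (ii) fails since $Z_{j+1}=\partial^+I_{j+1}$, and (iii) fails since $I_{j+1}\cup Z_{j+1}$ is not a rectangle. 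Finally, your worry about accumulating $O(\e^{1+\mu})$ errors in the borderline case is unfounded: the move ``surfactant $\mapsto -1$ at a point outside $I_{j+1}$'' does not change $I_{j+1}$ and hence leaves $\dis^1$ \emph{exactly} unchanged, so the paper's iterative removal produces a genuine chain of minimizers with no error. Your ``clean way'' of comparing directly with the target configuration would require $\#Z_{j+1}=\#\partial^+I_{j+1}$ for the given minimizer, which is not guaranteed in the borderline case; the iterative argument is precisely what produces a minimizer with this property.
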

\begin{proof}
    From \cite[Lemma 5.4]{CFS} we have that $\# Z_{j+1}\le\# Z_{j}$ and $\partial^+I_{j+1}\subset Z_{j+1}$.\\
    
    We prove (i).
    Suppose by contradiction that $\#\Z_{j+1}<\#\Z_{j}$. We consider any $\bar{p} \in \one_{j+1}$ (see Definition~\ref{def: corner unit}), and we define a competitor $\tilde{u}_{j+1}$ replacing $u_{j+1}(\bar{p})\mapsto0$. In this way we get 
    \begin{equation}\label{eq:energy_diff}
        \E_\e(\tilde{u}_{j+1})-\E_\e(u_{j+1})+\frac{\e}{\tempo}\left(\dis^{0}_\e(\tilde{u}_{j+1},u_j)-\dis^{0}_\e(u_1,u_0)\right)\leq 2\e(1-k)-\frac{\e}{\tempo}<0,
    \end{equation}
    in view of our assumption. Setting $\mu<1/4$, in view of Proposition~\ref{prop:connectedness}, for $\e$ sufficiently small we have $\frac{1}{\e}|\dis^1_\e(\tilde{u}_{j+1},u_j)-\dis^1_\e(u_{j+1},u_j)|\le c\e^{1+\mu}$, i.e. the increment of $\dis^1_\e$ is negligible with respect to \eqref{eq:energy_diff}. This contradicts the minimality of $u_{j+1}$, therefore we get the claim.\\
    
    We now prove (ii). Let us assume that \eqref{situation:3} holds. By contradiction we suppose that $Z_{j+1}\not\subseteq \partial^+I_{j+1}$, then there exists $p\in Z_{j+1}\setminus\partial^+I_{j+1}$ such that $\#\nn(p)\cap\{u_{j+1} = -1\}\ge 2$. We can perform the transformation depicted in Figure \ref{fig:1.6_3}, that reduces the total energy of the system and this is a contradiction in view of the minimality of $u_{j+1}$. Suppose instead that $\tempo = 1/(2-2k)$ and again that $Z_{j+1}\not\subseteq \partial^+I_{j+1}$. As before, there exists $p\in Z_{j+1}\setminus\partial^+I_{j+1}$ such that $\#\nn(p)\cap\{u_{j+1} = -1\}\ge 2$. Then the competitor $u_{j+1, 1}$ obtained by replacing $u_{j+1}(p)\mapsto -1$ is a minimizer since 
    \[\E_\e(u_{j+1, 1})-\E_\e(u_{j+1})+\frac{\e}{\tempo}\Bigl(\dis^0_\e(u_{j+1, 1}, u_j)-\dis^0_\e(u_{j+1}, u_j)\Bigl)\le -2\e(1-k)+\frac{\e}{\tempo} = 0.\]
    Let $n:=\# Z_{j+1}\setminus\partial^+I_{j+1}.$ We can define inductively a sequence of minimizers $u_{j+1, i}:i=1, \dots, n$. At step $i\ge 2$ we find 
    \[p^i\in \{u_{j+1, i-1} = 0\}\setminus\partial^+I_{j+1}\text{  such that }\#\nn(p)\cap\{u_{j+1, i-1} = -1\}\ge 2,\]
    and we define $u_{j+1, i}$ by replacing $u_{j+1}(p^i)\mapsto -1.$ Setting $\tilde{u}_{j+1}:=u_{j+1, n}$ we have by construction that $\tilde{u}_{j+1}$ is a minimizer which satisfies \eqref{eq:4.2}.

    Now we prove that $I_{j+1}$ is an octagon. In view of the discussion above we can suppose without loss of generality that $Z_{j+1} = \partial^+I_{j+1}$. In particular we deduce that $I_{j+1}\cup Z_{j+1}$ is not a rectangle. In view of Remark~\ref{rem:geometry_quasi_octagon} we have to show that for every pair of points $\{p, q\}\subset \partial^-I_{j+1}$ such that $q = p+\e e_i$ there exists $j\in\{1, \dots, 4\}$ such that $\{p, q\}\subset\PP_j$. This follows from the second part of Lemma~\ref{lemma:shape_optimality}. In order to show it we argue by contradiction, assuming without loss of generality that there exists $q = p+\e e_1$ such that $\{p, q\}\subset H^{j+1, i}$ for $i\ge 2$ and that $\{p-\e e_2, q-\e e_2\}\subset\partial^+I_{j+1}$, where we denoted by $H^{j+1, i},i=1, \dots, n_h$ the horizontal slices of $I_{j+1}$. Then we have that case (i) of Lemma~\ref{lemma:shape_optimality} cannot occur since $I_{j}$ is an octagon, and case (ii) of the same lemma cannot occur since $Z_{j+1} = \partial^+I_{j+1}$. Finally, case (iii) of Lemma~\ref{lemma:shape_optimality} cannot occur since $I_{j+1}\cup Z_{j+1}$ is not a rectangle. This is a contradiction, and therefore the proof is complete.
\end{proof}

We now state the theorem which describes the sides displacements under assumption (ii) of Lemma~\ref{lemma: il minimizzante circondato da surfactant}.

\begin{theorem}\label{teo:(1)_(2)_valgono}
    Let us assume that $\gamma=2$ and that $\tempo\ge1/(2-2k)$ (i.e. \eqref{situation:3} holds ``weakly''). Let $A\subset\rr^2$ be a not degenerate octagon, and for every $\e>0$ let $A_\e$ be an octagon such that $d_{\mathcal{H}}(A, A_\e)\to 0$ as $\e\to 0$. Let $(u_0^\e)_\e$ be such that $u_0^\e\in\A_\e$ for every $\e$, $\I^\e_0= A_\e\cap\e\ZZ^2$, and $Z^\e_0 = \partial^+I_0^\e$. For $\e>0$ let $(u^\e_j)_{j\ge 0}$ be a family of minimizing movements with initial data $u^\e_0$. For every $t\ge 0$ we denote by $A_\e(t)$ the set 
    \begin{equation*}
    A_\e(t) = \bigcup_{i\in\I^\e_{\lfloor t/\tau\rfloor}}Q_\e(i).
    \end{equation*}
    Then, it holds that $A_\e(t)$ converges as $\e\to 0$ locally uniformly in time to $A(t).$ The set
     $A(t)$ is an octagon for every $t\geq 0$
     such that $A(0) = A$. Denoting by $\PP_i(t),\,\DD_i(t)$ its parallel and diagonal sides, and by $P_i(t),\,D_i(t)$ their lengths, then every side of $A(t)$ moves inwards remaining parallel to the side itself. The velocities $v_{\PP_i}(t)$ and $v_{\DD_i}(t)$ of the $i$-th parallel and diagonal side satisfy the following differential inclusion 
\begin{align}\label{eq:sides_movement_octagon_gamma=2}
        &v_{\PP_i}(t)\begin{cases}
            =\frac{1}{\tempo}\Bigl\lfloor\frac{2\tempo(1-k)}{P_{i}(t)}\Bigl\rfloor&\text{ if }\frac{2\tempo(1-k)}{{P_{i}(t)}}\not\in \NN,\\[.5em]
            \in\Bigl[\frac{1}{\tempo}\Bigl(\frac{2\tempo(1-k)}{{P_{i}(t)}}-1\Bigl),\,\frac{2(1-k)}{{P_{i}(t)}}\Bigl]&\text{ otherwise},
        \end{cases}\\[1em]
        \label{eq:sides_movement_octagon_gamma=2.2}        &v_{\DD_i}(t)\begin{cases}     =\frac{\sqrt{2}}{2\tempo}\bigg\lfloor\frac{2\sqrt{2}(1-k)\tempo-\sqrt{2}}{D_{i}(t)}\bigg\rfloor&\text{ if }\frac{2\sqrt{2}(1-k)\tempo-\sqrt{2}}{D_{i}(t)}\not\in \NN,\\[.5em]
        \in\Bigl\{\frac{\sqrt{2}}{2\tempo}\Bigl(\frac{2\sqrt{2}(1-k)\tempo-\sqrt{2}}{D_i(t)}-1\Bigl),\,\frac{\sqrt{2}}{2\tempo}\Bigl(\frac{2\sqrt{2}(1-k)\tempo-\sqrt{2}}{D_i(t)}\Bigl)\Bigl\}&\text{otherwise}.
        \end{cases}
    \end{align}
\end{theorem}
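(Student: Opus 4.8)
The plan is to follow the scheme already established for the $\gamma<2$ limit flows (Theorem~\ref{teo:minimizing_movements_surfactant_covering_more_than_diagonals}), reducing the $\gamma=2$ case with $\tempo\ge 1/(2-2k)$ to a discrete iteration for the side lengths. First I would invoke Lemma~\ref{lemma: il minimizzante circondato da surfactant}-(ii): since the initial datum satisfies $Z_0^\e=\partial^+I_0^\e$, that lemma guarantees (for $\e$ small) that $I_{j+1}^\e$ is again an octagon, that $I_{j+1}^\e\subset I_j^\e$ by Proposition~\ref{prop:connectedness}, and — after possibly passing to an equivalent minimizer $\tilde u_{j+1}$ — that $Z_{j+1}^\e=\partial^+I_{j+1}^\e$. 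This propagates inductively, so along the whole evolution the configuration is a surfactant-surrounded octagon, and the dissipation $\dis^0_\e$ contributes only the bookkeeping term $\e^\gamma|\#Z_{j+1}-\#Z_j|=\e^2|\sum_i(\beta_{j,i}-2\alpha_{j,i})|$, which after division by $\tau=\tempo\e$ is of order $\e$ and has a definite sign relation to the energy change governed by \eqref{situation:3}. The key point of the induction base is that $A$ nondegenerate forces $D_{0,i}^\e>0$ for all $i$, so assumption \eqref{H} and \eqref{eq:boundary_not_surrounded} are met for a macroscopic time.

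Next I would derive the side-displacement law. Because $Z_{j+1}^\e=\partial^+I_{j+1}^\e$, the energy of the minimizer can be written explicitly — as in Remark~\ref{rem:energy_is_perimeter} and the computation \eqref{eq:energy_intermediate_stage}, but now with \emph{all} of $\partial^+$ covered: $\E_\e(u_{j+1})=4\e(1-k)\#\partial^+I_{j+1}+\text{(lower order)}$, and using $\#\partial^+I_{j+1}=\sum_i(\tilde P_{j+1,i}-2\e)/\e + \#\zero_{j+1}\cdot(\text{const})$ together with the length relations \eqref{eq: Lung. lati}, this becomes, up to $\e$-constants, a decoupled functional $\sum_i[\,c_1(1-k)(\alpha_{j,i}+\tfrac12\text{-corrections}) - \text{(perimeter gain)}\,]$ plus the quadratic dissipation \eqref{eq:explicit_dissipation}. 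Minimising over $\alpha_{j,i},\beta_{j,i}\in\NN\cup\{0\}$ term by term — exactly as in the fourth step of Lemma~\ref{lemma:5_optimal_shape_not_surrounded} and in Proposition~\ref{prop:5.21} — yields $\alpha_{j,i}\approx 2\tempo(1-k)/P_{j,i}$ and $\beta_{j,i}\approx(2\sqrt2(1-k)\tempo-\sqrt2)/D_{j,i}$ with $O(\e)$ and integer-rounding ambiguities; the extra $-\sqrt2$ in the $\beta$ numerator comes from the fact that a diagonal move of one step costs a layer of surfactant-coated cells whose energetic balance differs from the parallel case by the single unit $\e$ appearing in $\tilde P_{j+1,i}-2\e$. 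Building competitors $u^\pm$ that shift one parallel (resp.\ one diagonal) side in or out by one lattice step, with the surfactant redistributed along $\partial^+$, and comparing $\enF(u^\pm,u_j)\ge\enF(u_{j+1},u_j)$, pins $\alpha_{j,i}$ and $\beta_{j,i}$ to the two (resp.\ one) integers on either side of these ratios, which is the discrete analogue of \eqref{eq:sides_movement_octagon_gamma=2}–\eqref{eq:sides_movement_octagon_gamma=2.2}.

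Then I would pass to the continuum limit exactly as in the proof of Theorem~\ref{teo:minimizing_movements_surfactant_covering_more_than_diagonals}: define $D_i^\e(t),P_i^\e(t)$ by linear interpolation of the discrete values on $[j\tau,(j+1)\tau]$, observe from the displacement bounds (which give $\alpha_{j,i},\beta_{j,i}\le c$ uniformly, by the analogue of Lemma~\ref{lemma:5_optimal_shape_not_surrounded}-(ii)–(iv)) that these interpolants are equi-Lipschitz and equibounded on any interval where $P_i^\e\ge\bar c$, extract a uniformly convergent subsequence with limits $P_i(t),D_i(t)$, and note that the corresponding octagons $A_\e(t)$ converge in Hausdorff distance, hence $A_\e(t)\to A(t)$ locally uniformly in time with $A(t)$ an octagon and $A(0)=A$. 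The differential inclusions \eqref{eq:sides_movement_octagon_gamma=2} and \eqref{eq:sides_movement_octagon_gamma=2.2} follow by dividing the displacement relations $P_{j+1,i}-P_{j,i}=2\e\alpha_{j,i}-\e(\beta_{j,i-1}+\beta_{j,i+1})$ etc.\ by $\tau=\tempo\e$ and passing to the limit; at times where $2\tempo(1-k)/P_i(t)\notin\NN$ the integer $\alpha_{j,i}$ is forced and one gets an equality, while at the (measure-zero in generic situations, but possibly persistent) exceptional times the two admissible integers produce the closed-interval inclusion, exactly as in \eqref{eq:sides_velocity_negligible}.

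The main obstacle I anticipate is twofold. First, the careful energy accounting when $Z_{j+1}=\partial^+I_{j+1}$ but $I_{j+1}\cup Z_{j+1}$ need not be ``thick'' everywhere — one must check that no lower-order surfactant defect (a cell of $Z$ not in a corner, a surfactant-in-the-corner unit) survives in a minimizer, which is where one repeatedly uses the strict inequality $\tempo>1/(2-2k)$ (and the boundary case $\tempo=1/(2-2k)$ only gives a \emph{weak} inequality, hence the ``for every minimizer there exists another minimizer'' clause of Lemma~\ref{lemma: il minimizzante circondato da surfactant}-(ii), which forces us to work with the selected representative $\tilde u_{j+1}$ throughout the induction). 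Second, justifying that the exceptional, possibly long-lived resonance times $\{t:2\tempo(1-k)/P_i(t)\in\NN\}$ do not obstruct the convergence or force a non-octagonal limit: here one argues as in the $\gamma<2$ case that the quasi-octagonal defects are always $O(1)$ in number, hence negligible in the $\e\to 0$ limit, so $A(t)$ stays an octagon even while the $\e$-level sets oscillate between the two admissible displacements. Everything else is a routine transcription of the $\gamma<2$ arguments with the dissipation term $\e^\gamma\dis^0_\e/\tau=\e\dis^0_\e/\tempo$ now contributing at leading order rather than being negligible, which is precisely what shifts the critical ratios from $4\tempo/P$ (the \cite{BGN} curvature flow) to $2\tempo(1-k)/P$.
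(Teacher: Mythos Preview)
Your scheme matches the paper's: propagate the octagon-with-$Z=\partial^+I$ structure via Lemma~\ref{lemma: il minimizzante circondato da surfactant}-(ii), derive the discrete displacements, and pass to the limit by equi-Lipschitz interpolation as in Theorem~\ref{teo:minimizing_movements_surfactant_covering_more_than_diagonals}.

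The one concrete slip is in your bookkeeping of the surfactant dissipation. Since $Z_s=\partial^+I_s$ at every step, one has $\#Z_{j+1}-\#Z_j=\#\partial^+I_{j+1}-\#\partial^+I_j=-\sum_i\beta_{j,i}$ (cf.\ \eqref{cadinality_difference}), \emph{not} $\sum_i(\beta_{j,i}-2\alpha_{j,i})$; the latter is the change in $\#\zero$. This matters because it is exactly the term $\tfrac{\e^\gamma}{\tau}\dis^0_\e=\tfrac{\e}{\tempo}\sum_i\beta_{j,i}$ that shifts the linear coefficient of $\beta_i$ from $-2\e(1-k)$ to $\tfrac{\e}{\tempo}-2\e(1-k)$ and thereby produces the $-\sqrt2$ in the diagonal numerator; your heuristic about ``the single unit $\e$ in $\tilde P_{j+1,i}-2\e$'' is borrowed from the stage-three computation \eqref{eq:energy_intermediate_stage} and is not the mechanism here. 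With your formula one would also pick up a spurious $-\tfrac{2\e}{\tempo}\sum_i\alpha_i$ that would wrongly shift the parallel velocity away from $2\tempo(1-k)/P_i$. The paper's route sidesteps competitor constructions entirely: since admissible configurations may be restricted to octagons with $Z=\partial^+I$, one writes $\E_\e(u_{j+1})+\tfrac1\tau\dis^1_\e+\tfrac{\e^2}{\tau}\dis^0_\e$ explicitly as a sum of decoupled scalar functions of each $\alpha_i$ and each $\beta_i$, and minimizes term by term over $\NN\cup\{0\}$ to obtain the discrete displacement formulas directly.
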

\begin{proof}
Let $\bar{c}>0$ be a positive constant, and let $\bar{\e}$ be given by Proposition~\ref{prop:connectedness}. 
Then from Proposition~\ref{prop:connectedness} and Lemma~\ref{lemma: il minimizzante circondato da surfactant}, we know that for every $\e\le\bar{\e}$ the set $\I^\e_{j+1}$ is an octagon contained into $\I^\e_j$, as long as $I_j^\e$ verifies assumption \eqref{H} of Subsection~\ref{subsub_H}. Moreover again from Lemma~\ref{lemma: il minimizzante circondato da surfactant} we know that there exists a minimizer $u_{j+1}$ such that $\Z_{j+1}^\e = \partial^+\I^\e_{j+1}$ for $\e\le\bar{\e}$. As a result the competitors in the minimum problem will be assumed to be octagons. Moreover, since $A$ is not degenerate, we can suppose that $D^\e_{0,i}>0$ for every $i=1,\dots,4$.

From now on we omit the dependence on $\e$, we write $\alpha_{i},\,\beta_i$ in place of $\alpha_{j, i}$ and $\beta_{j, i}$, and we denote by $P_{j, i},\,D_{j, i}$ the lengths of the parallel and diagonal sides of $I_j$. 
To compute the sides displacements we observe that
\begin{equation}
    \E_\e(u_{j+1})=\sum_{i=1}^{4}\frac{D_{j+1,i}}{\sqrt{2}\e}4\e(1-k)+3\e(1-k)\sum_{i=1}^4\frac{P_{j+1,i}}{\e}+4\e(1-k).
    \label{energy_gamma=2}
\end{equation}
The dissipation is given by
\begin{equation*}
\begin{split}
    \frac{1}{\tau}\dis_\e^1(u_{j+1}, u_j) &=\frac{\e}{\tempo}\sum_{i=1}^4\frac{D_{j,i}}{\sqrt{2}}\frac{\beta_i(\beta_i+1)}{2}+\frac{\e}{\tempo}\sum_{i=1}^4P_{j,i}\frac{\alpha_i(\alpha_i+1)}{2}+e(\alpha_i, \beta_i)\\
    \frac{\e^\gamma}{\tau}\dis^0_\e(u_{j+1}, u_j) &=\frac{\e}{\tempo}\left[\som \frac{P_{j,i}-P_{j+1,i}}{\e}+\som \frac{{D_{j,i}}-{D_{j+1,i}}}{\sqrt{2}\e}\right],
\end{split}
\end{equation*}
where $e(\alpha_i, \beta_i)$ is an error term which satisfies $|e(\alpha_i, \beta_i)|\le c\e^2$. 
From \eqref{eq: spostamenti} and \eqref{eq: Lung. lati} we get that
\begin{equation*}
    \left[\som \frac{P_{j,i}-P_{j+1,i}}{\e}+\som \frac{D_{j,i}-D_{j+1,i}}{\sqrt{2}\e}\right]=\som \beta_i.
\end{equation*}
Then, \eqref{energy_gamma=2} can be rewritten as
\begin{equation*}
    \E_\e(u_{j+1})=-2\e(1-k)\som\alpha_i-2\e(1-k)\som\beta_i+\frac{4(1-k)}{\sqrt{2}}\som D_{j,i}+3(1-k)\som P_{j,i} +4\e(1-k).
\end{equation*}
Hence, neglecting the error term $e(\alpha_i, \beta_i)$ which is of order $\e^2$, the function that we have to minimize is 
\begin{equation*}
    -2\e(1-k)\som\alpha_i+\left(\frac{\e}{\tempo}-2\e(1-k)\right)\som\beta_i+\frac{\e}{\tempo}\left[\som\frac{D_{j,i}}{\sqrt{2}}\frac{\beta_i(\beta_i+1)}{2}+\sum_{i=1}^4P_{j,i}\frac{\alpha_i(\alpha_i+1)}{2}\right].
\end{equation*}
The lengths of the sides $\PP_{j+1,i}$ and $\DD_{j+1,i}$ are determined by computing respectively
\begin{equation*}
    \alpha_{j,i}=\argmin\left\{-2(1-k)\alpha_i+\frac{1}{\tempo}P_{j,i}\frac{\alpha_i(\alpha_i+1)}{2}+\frac{e(\alpha_i, \beta_i)}{\e}: \alpha_i\in \NN\right\},
\end{equation*}
\begin{equation*}
    \beta_{j,i}=\argmin\left\{\left(\frac{1}{\tempo}-2(1-k)\right)\beta_i+\frac{1}{\tempo}\frac{D_{j ,i}}{\sqrt{2}}\frac{\beta_i(\beta_i+1)}{2}+\frac{e(\alpha_i, \beta_i)}{\e}: \beta_i\in \NN\right\}.
\end{equation*}
By direct computation we find 
\begin{align}\label{eq:sides_movement_octagon_gamma=20}
        &\alpha_{j,i}\begin{cases}
            =\Bigl\lfloor\frac{2\tempo(1-k)}{P_{j,i}}\Bigl\rfloor&\text{ if }\text{ dist}\Bigl(\frac{2\tempo(1-k)}{P_{j,i}}, \NN\Bigl)\ge c\e,\\[.5em]
            \in\Bigl\{\Bigl[\frac{2\tempo(1-k)}{P_{j,i}}\Bigl]-1,\,\Bigl[\frac{2\tempo(1-k)}{P_{j,i}}\Bigl]\Bigl\}&\text{ otherwise},
        \end{cases}\\[1em]
        \label{eq:sides_movement_octagon_gamma=2.20}        &\beta_{j,i}\begin{cases}     =\Big\lfloor\frac{2\sqrt{2}(1-k)\tempo-\sqrt{2}}{D_{j,i}}\Big\rfloor&\text{ if }\text{dist}\Bigl(\frac{2\sqrt{2}(1-k)\tempo-\sqrt{2}}{D_{j,i}}, \NN\Bigl)\ge c\e\\[.5em]
        \in\Bigl\{\Bigl[\frac{2\sqrt{2}(1-k)\tempo-\sqrt{2}}{D_{j,i}}\Bigl]-1,\,\Bigl[\frac{2\sqrt{2}(1-k)\tempo-\sqrt{2}}{D_{j,i}}\Bigl]\Bigl\}&\text{otherwise}.
        \end{cases}
    \end{align}
Recursively repeating the argument above, we find sequences $D^\e_{j, i},\,P^\e_{j, i}$ and $\alpha^\e_{j, i},\,\beta^\e_{j, i}$ which satisfy (\ref{eq: Lung. lati}), i.e.  
 \begin{equation*}
     %\label{eq: Lung. lati_10}
       \begin{split}
           &P^\e_{j+1,i}=P^\e_{j,i}+2\e \alpha^\e_{j,i}-\e(\beta^\e_{j,i-1}+\beta^\e_{j,i})\\           &D^\e_{j+1,i}=D^\e_{j,i}+\sqrt{2}\e\beta^\e_{j,i}-\sqrt{2}\e(\alpha^\e_{j,i}+\alpha^\e_{j,i+1}).
       \end{split}
\end{equation*}
To conclude, we can argue as in the first part of the proof of Theorem~ \ref{teo:minimizing_movements_surfactant_covering_more_than_diagonals}, observing that \eqref{eq:sides_movement_octagon_gamma=2} and \eqref{eq:sides_movement_octagon_gamma=2.2} follow by passing to the limit in \eqref{eq:sides_movement_octagon_gamma=20} and in \eqref{eq:sides_movement_octagon_gamma=2.20} respectively. We only point out that in \eqref{eq:sides_movement_octagon_gamma=2.2} the multiplicative factor $\frac{\sqrt{2}}{2}$ is needed to take into account that $v_{\DD_i}$ is the velocity in the direction which is orthogonal to the side.
\end{proof}

\noindent{\bf The case \eqref{situation:1} does not hold.}
In view of previous paragraph, from now on we may consider only the situation in which the minimizing set $I_j$ is not surrounded by surfactant. In this paragraph we suppose that $\tempo\le\frac{1}{4k}$, i.e. \eqref{situation:1} does not hold.

\begin{proposition}\label{prop:situation_1_not_occurs}
        Let $\gamma=2$, let $u_j\in \mathcal{A}_\e$ which satisfies assumption \eqref{H} of Subsection \ref{subsub_H}, and let $u_{j+1}$ be a minimizer of $\enF(\cdot, u_j)$. Then we have the following.
    \begin{itemize}
        \item[(i)] If $\displaystyle\tempo<\frac{1}{4k}$ (i.e. \eqref{situation:1} does not hold ``strictly'')     then there exists $\bar{\e}$ such that for every $\e\le\bar
        \e$ it holds $\#\Z_{j+1} = \#\Z_{j}$. In particular all the analysis of case $\gamma<2$ holds with no changes, and we refer to Section~\ref{sec:gamma<2} and \cite[Section 5]{CFS} for a complete study.
        \item[(ii)] Let $\mu<1/4$. Suppose that $\displaystyle \tempo = \frac{1}{4k}$ and that $I_j$ is a quasi-octagon such that  \begin{equation*}
        %\label{eq:boundary_not_surrounded_uguale}
         \#\partial^+\I_j> \# Z_j + 8\e^{\mu-1}.
         \end{equation*}
          Then there exists $\bar{\e}$ such that for every $\e\le\bar{\e}$ any minimizer $u_{j+1}$ is a quasi-octagon with $\#Z_{j+1}\ge\#Z_j$. Moreover, for every minimizer $u_{j+1}$ there exists an other minimizer $\tilde{u}_{j+1}$ such that $\#\{\tilde{u}_{j+1}=0\} = \# Z_j$, and $\{\tilde{u}_{j+1} = 1\} = \{u_{j+1}=1\}.$ Moreover, if $\zero_{j}\subset Z_j$ then for every minimizer $u_{j+1}$ it holds $\# Z_{j+1} = \#Z_j.$
    \end{itemize}
\end{proposition}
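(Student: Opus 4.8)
\textbf{Proof proposal for Proposition~\ref{prop:situation_1_not_occurs}.}

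The plan is to argue by contradiction in each of the two cases, using the now-familiar competitor strategy of modifying $u_{j+1}$ at a few boundary points and comparing total energies. For part (i), suppose $\#Z_{j+1}\neq\#Z_j$. By \cite[Lemma 5.1]{CFS}-type reasoning we cannot immediately rule this out since $\gamma=2$, so instead I would split according to the sign of $\#Z_{j+1}-\#Z_j$. If $\#Z_{j+1}>\#Z_j$, then since $u_{j+1}$ is a minimizer and (by Proposition~\ref{prop:connectedness} and the staircase structure) $I_{j+1}$ is a connected staircase set, one can show there is a surfactant cell $\bar p$ with $u_{j+1}(\bar p)=0$ that can be replaced by $-1$ (or $1$): this is the transition of Figure~\ref{fig:1.6_1}, which changes $\E_\e$ by at most $4\e(1-k)-4\e$ (the best case being when $\bar p\in\zero_{j+1}$), changes $\dis^1_\e$ negligibly (a lower-order term controlled via Proposition~\ref{prop:connectedness} as in the proof of Lemma~\ref{lemma: il minimizzante circondato da surfactant}), and changes $\e^\gamma\dis^0_\e/\tau=\e\dis^0_\e/(\tempo\e)\cdot\e=\frac{\e}{\tempo}\dis^0_\e$ by $-\frac{\e}{\tempo}$. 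Under $\tempo<1/(4k)$, i.e. $\frac{1}{\tempo}>4k$, we get $4\e(1-k)-4\e+\frac{\e}{\tempo}> 4\e(1-k)-4\e+4k\e = 0$; wait — the sign goes the wrong way, so the precise bookkeeping matters. The correct accounting is that removing a surfactant particle (decreasing $\#Z$) is what costs $\frac{\e}{\tempo}$ and gains $2\e(1-k)$ per removed corner particle, so the relevant inequality to exploit is $2\e(1-k)<\frac{\e}{\tempo}$, equivalent to $\tempo<\frac{1}{2-2k}$; but we only have $\tempo<\frac{1}{4k}$. I would therefore instead argue: if $\#Z_{j+1}<\#Z_j$, then adding back one surfactant particle at an optimal location (a cell of $\partial^+I_{j+1}$ adjacent to $I_{j+1}$ in exactly one point, or a corner cell) changes $\E_\e$ by at most $-4\e(1-k)+4\e\le -4k\e$ hmm — again I must be careful about \emph{which} placement is optimal. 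The cleanest route, which I would follow, is: the minimality of $u_{j+1}$ forces the surfactant to sit in the energetically most favorable positions, which by Lemma~\ref{lemma:surfactant_placement} and Remark~\ref{rem:blu_on_the_boundary} means $Z_{j+1}\subset\partial^+I_{j+1}$ and is as concentrated on $\zero_{j+1}$ as possible; then reinstating a removed particle on $\zero_j\cap\partial^+I_{j+1}$ (nonempty since $I_{j+1}\subset I_j$ via Proposition~\ref{prop:connectedness} and $d_\hh(\partial A_j,\partial A_{j+1})\le\e^\mu$) saves $4\e(1-k)$ of energy while costing only $\frac{\e}{\tempo}$ of dissipation. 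Since $4\e(1-k) > \frac{\e}{\tempo}$ exactly when $\tempo>\frac{1}{4(1-k)}$, and since $k\in(1/3,1)$ gives $\frac{1}{4(1-k)}<\frac{1}{4k}$ iff $1-k>k$ iff $k<1/2$ — so this still does not cover the whole parameter range. The resolution is that the statement asserts \eqref{situation:1} \emph{does not hold}, i.e. $\tempo\le\frac{1}{4k}$, which is the regime where \emph{increasing} $Z$ is \emph{not} favorable, hence the contradiction comes from the opposite direction: if $\#Z_{j+1}>\#Z_j$, the transition of Figure~\ref{fig:1.6_1} removes a surfactant particle from a corner, costing $-4\e(1-k)+4\e+(-\frac{\e}{\tempo})$ in the combined functional... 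I will need to recompute these signs carefully against the exact definitions of $\E_\e$ and $\dis^0_\e$, but the structural point is that $\tempo\le\frac{1}{4k}$ makes one of the two monotone perturbations strictly energy-decreasing, contradicting minimality, so $\#Z_{j+1}=\#Z_j$ and the whole $\gamma<2$ analysis of Section~\ref{sec:gamma<2} carries over verbatim (since those arguments never used $\gamma<2$ beyond Lemma~\ref{lemma: surfactant costante}).

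For part (ii), at the threshold $\tempo=\frac{1}{4k}$ the perturbation that increases $Z$ becomes energy-\emph{neutral} rather than strictly unfavorable, so minimizers are no longer unique in their surfactant count but one can still extract a canonical representative. First I would invoke Lemma~\ref{lemma:5_optimal_shape_not_surrounded}, whose hypothesis $\#\partial^+I_j-8\e^{\mu-1}>C_\e$ is exactly the assumed $\#\partial^+I_j>\#Z_j+8\e^{\mu-1}$, to conclude $I_{j+1}$ is a quasi-octagon with $Z_{j+1}\subset\partial^+I_{j+1}$ for $\e$ small. Next, to see $\#Z_{j+1}\ge\#Z_j$: if $\#Z_{j+1}<\#Z_j$ one reinstates a surfactant particle at a corner cell of $\partial^+I_{j+1}$ that lies in $\zero_j$ (nonempty by the inclusion $I_{j+1}\subset I_j$ and the Hausdorff bound), which at the threshold is an energy-neutral move in $\E_\e+\frac{\e}{\tempo}\dis^0_\e$ up to the negligible $\dis^1_\e$ increment $O(\e^{1+\mu})$ from Proposition~\ref{prop:connectedness}; but since $\dis^1_\e$ changes by a \emph{strictly positive} amount only if the added cell was not already adjacent to the set in a way that preserved dissipation — actually one chooses the cell so that $\dis^1_\e$ strictly decreases or stays essentially equal, and a careful case split shows the move is non-increasing, and one can further arrange strict decrease unless $\#Z_{j+1}=\#Z_j$, giving the claim. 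To produce $\tilde u_{j+1}$ with $\#\{\tilde u_{j+1}=0\}=\#Z_j$ and the same phase-$1$ region, I would iterate the \emph{reverse} move: starting from a minimizer $u_{j+1}$ with $\#Z_{j+1}\ge\#Z_j$, repeatedly remove a surfactant particle from the energetically least favorable occupied position (one in $\partial^+I_{j+1}\setminus\zero_{j+1}$, i.e. a cell touching $I_{j+1}$ in exactly two points, which exists as long as $\#Z_{j+1}>\#\zero_{j+1}$, or otherwise from $\zero_{j+1}$), each removal being energy-neutral at the threshold and leaving $\dis^1_\e$ unchanged and $\{u=1\}$ fixed; after $\#Z_{j+1}-\#Z_j$ steps one reaches the desired $\tilde u_{j+1}$. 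Finally, if $\zero_j\subset Z_j$, then since $I_{j+1}\subset I_j$ forces $\#\zero_{j+1}\le\#\zero_j\le\#Z_j$ (by Remark~\ref{rem:3.4} and Lemma~\ref{lemma:5_optimal_shape_not_surrounded}-(i)), combined with $\#Z_{j+1}\ge\#Z_j$ and the neutrality argument showing a minimizer cannot \emph{strictly} increase $\#Z$ beyond $\#Z_j$ when all the ``cheap'' corner slots are already accounted for in $Z_j$, one gets $\#Z_{j+1}=\#Z_j$.

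\textbf{Main obstacle.} The principal difficulty is the exact energy bookkeeping at and near the threshold $\tempo=1/(4k)$: one must identify precisely which single-cell perturbation (removing versus adding a surfactant particle, and at a corner versus a flat boundary cell) is the extremal one, and verify that the combined variation of $\E_\e$, $\dis^1_\e$ (negligible but must genuinely be $o(\e)$, which is where Proposition~\ref{prop:connectedness}'s Hausdorff bound $d_\hh(\partial A_j,\partial A_{j+1})\le\e^\mu$ and hence $\dis^1_\e$-increment $\le c\e^{1+\mu}$ is essential) and $\frac{\e}{\tempo}\dis^0_\e$ has the right strict/non-strict sign. In particular, distinguishing the ``strictly'' $<1/(4k)$ case from the ``weakly'' $=1/(4k)$ case requires tracking that the competitor inequality is strict in the former (yielding $\#Z_{j+1}=\#Z_j$ outright) and only non-strict in the latter (yielding the weaker conclusions with the auxiliary minimizer $\tilde u_{j+1}$). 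A secondary subtlety is ensuring the iterated removal process in (ii) stays within the class of admissible competitors — i.e. that at each step there genuinely exists a surfactant particle whose removal does not disconnect or otherwise spoil the quasi-octagonal structure — which follows from Lemma~\ref{lemma:surfactant_placement} and Remark~\ref{rem:blu_on_the_boundary} guaranteeing $Z_{j+1}\subset\partial^+I_{j+1}$ so that surfactant only ever sits on the exterior boundary and its removal is harmless.
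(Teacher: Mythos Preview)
Your proposal has the right overall shape (competitor arguments, case split on the sign of $\#Z_{j+1}-\#Z_j$), but the core computations are not carried through, and in two places the argument as sketched would actually fail.

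\textbf{Part (i), case $\#Z_{j+1}>\#Z_j$.} You circle around the sign issue without landing on the correct competitor. The paper's move is this: since $\#Z_{j+1}>\#Z_j$ there exists $\bar p\in\partial^- Z_{j+1}$ (a surfactant particle on the \emph{boundary of the surfactant cluster}, not a point of $\zero_{j+1}$) with $u_{j+1}(\bar p+\e e_1)=u_{j+1}(\bar p+\e e_2)=-1$. Replace $u_{j+1}(\bar p)\mapsto -1$. Then $\dis^1_\e$ is unchanged, the energy increases by at most $4\e-4\e(1-k)=4k\e$, and $\frac{\e}{\tempo}\dis^0_\e$ decreases by $\frac{\e}{\tempo}$, so the total variation is $\le 4k\e-\frac{\e}{\tempo}<0$ precisely when $\tempo<1/(4k)$. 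Your attempts via $\zero_{j+1}$ or via ``adding back'' a particle give the wrong balance; the point is that removing a surfactant \emph{at a corner of $Z_{j+1}$ itself} is the extremal perturbation.

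\textbf{Part (i), case $\#Z_{j+1}<\#Z_j$.} You miss the one-line algebraic reduction: since $k\in(1/3,1)$ one has $4k>2-2k$, hence $\tempo<\frac{1}{4k}<\frac{1}{2-2k}$, so \eqref{situation:3} fails strictly and the argument of Lemma~\ref{lemma: il minimizzante circondato da surfactant}(i) (insert a surfactant at $\bar p\in\one_{j+1}$, gaining $2\e(1-k)-\frac{\e}{\tempo}<0$) applies verbatim. Your computation with $\frac{1}{4(1-k)}$ is off by a factor of $2$ and leads you astray.

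\textbf{Part (ii), final claim.} Your argument that $\zero_j\subset Z_j\Rightarrow\#Z_{j+1}=\#Z_j$ is not correct as written: ``all the cheap corner slots already accounted for'' does not by itself rule out $\#Z_{j+1}>\#Z_j$, since at the threshold the corner moves are exactly neutral. The paper's argument is sharper: assuming $\#Z_{j+1}>\#Z_j$, first show $Z_{j+1}\subset\zero_{j+1}$ (any $p\in Z_{j+1}\setminus\zero_{j+1}$ can be sent to $-1$ with strict gain, using \eqref{eq:4.12}). Then $\#\zero_{j+1}\ge\#Z_{j+1}>\#Z_j\ge\#\zero_j$, so there exists $p\in\zero_{j+1}\setminus\zero_j$, and necessarily $p\in I_j$. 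Replacing $u_{j+1}(p)\mapsto 1$ keeps the $\E_\e+\frac{\e}{\tempo}\dis^0_\e$ balance at most zero (it equals zero at the threshold) but \emph{strictly decreases} $\dis^1_\e$ since $p\in I_j$. This strict decrease from the phase-1 dissipation, not from the energy/surfactant balance, is the mechanism you are missing.
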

\begin{proof}
Let us prove (i). In view of Proposition~\ref{prop:connectedness} there exists $\bar{\e}$ such that for every $\e\le\bar
    {\e}$ the set $I_{j+1}$ is a staircase set. We want to prove that $\#Z_{j+1}=\#Z_j$. Let us assume first that $\#\Z_{j+1}>\#\Z_j$. Without loss of generality we may suppose that there exists $\overline{p} \in \partial^-\Z_{j+1}$ such that $u_{j+1}(\bar{p}+\e e_2) = u_{j+1}(\bar{p}+\e e_1) = -1$, and we define $\tilde{u}_{j+1}$ replacing $u_{j+1}(\overline{p})\mapsto -1$. Since $\dis^1_\e(\tilde{u}_{j+1},u_j)= \dis^1_\e(u_{j+1},u_j)$ and $\zeta<\frac{1}{4k}$ we get $$\E_\e(\tilde{u}_{j+1})-\E_\e(u_{j+1})+\frac{\e}{\tempo}\left(\dis^{0}_\e(\tilde{u}_{j+1},u_j)-\dis^{0}_\e(u_{j+1},u_j)\right)\leq 4\e-4\e(1-k)-\frac{\e}{\tempo}<0$$
     which is in contradiction with the minimality of $u_{j+1}$.
    \\
    Suppose instead that $\#\Z_{j+1}<\#\Z_{j}$. We observe that since $k\in (1/3, 1)$, we have $$\tempo<\frac{1}{4k}<\frac{1}{2-2k},$$ i.e. \eqref{situation:3} does not hold ``strictly'', as well. We can conclude as in the proof of Lemma~\ref{lemma: il minimizzante circondato da surfactant}-(i).\vspace{5pt}\\

    Now we prove case (ii). The fact that for every minimizer $u_{j+1}$ it holds $\# Z_{j+1}\ge \#Z_{j}$ can be proved as in (i).

    Now let $u_{j+1}$ be a minimizer of $\enF(\cdot, u_j).$ Suppose that $\#Z_{j+1}>\#Z_j.$ We observe that $Z_{j+1}\subset \partial^+I_{j+1}$, otherwise we have a contradiction by comparing $u_{j+1}$ with the competitor obtained replacing any element $p\in Z_{j+1}\setminus\partial^+I_{j+1}$ with $u_{j+1}(p)\mapsto -1.$ Under this assumption we prove that
    $\zero_{j}\not\subset Z_j$, which is the last statement of (ii). To do so, first we observe that $Z_{j+1}\subset \zero_{j+1}.$ Indeed otherwise, if there exists $p\in Z_{j+1}\setminus\zero_{j+1}$, then we consider the competitor $\tilde{u}_{j+1}$ obtained by replacing $u_{j+1}(p)\mapsto -1$ and we observe that, since   $\tempo = 1/4k$, it holds 
    \begin{equation}\label{eq:4.12}
        \E_\e(\tilde{u}_{j+1})-\E_\e(u_{j+1})+\frac{\e^\gamma}{\tau}\Bigl(\dis^0_\e(\tilde{u}_{j+1}, u_{j})-\dis^0_\e(u_{j+1}, u_{j})\Bigl)\le 2\e-2\e(1-k)-\frac{\e}{\tempo} < 0,
    \end{equation}
    which contradicts the minimality of $u_{j+1}$ (the first inequality of \eqref{eq:4.12} is an equality in case $\#\nn(p)\cap Z_{j+1} = 2$). Now, suppose by contradiction that $\zero_j\subset Z_j$. Since $\#Z_{j+1}>\#Z_j$, and since $Z_{j+1}\subset \zero_{j+1},$ we deduce that there exists $p\in\zero_{j+1}\setminus\zero_j,$ and therefore, in particular, $p\in\zero_{j+1}\cap I_j$. We define the competitor $\tilde{u}_{j+1}$ by replacing $u_{j+1}(p)\mapsto 1$, so that $\dis^1_\e(\tilde{u}_{j+1}, u_j)<\dis^1_\e(u_{j+1}, u_j).$ Moreover it holds 
    \begin{equation*}
    %\label{eq:4.13}
        \E_\e(\tilde{u}_{j+1})-\E_\e(u_{j+1})+\frac{\e^\gamma}{\tau}\Bigl(\dis^0_\e(\tilde{u}_{j+1}, u_{j})-\dis^0_\e(u_{j+1}, u_{j})\Bigl)\le 4\e-4\e(1-k)-\frac{\e}{\tempo} = 0,
    \end{equation*}
    therefore $\enF(\tilde{u}_{j+1},u_j)<\enF(u_{j+1},u_j),$ which contradicts the minimality of $u_{j+1}$. 

    Now we show that for every minimizer $u_{j+1}$ there exists an other minimizer $\tilde{u}_{j+1}$ such that $\#\{\tilde{u}_{j+1} = 0\} = \#Z_{j}$ and $\{\tilde{u}_{j+1} = 1\} = \{u_{j+1} = 1\}.$
    Consider a minimizer $u_{j+1}$, and let $n:=\#Z_{j+1}-\#Z_j$.    
    We may suppose $n>0$, since otherwise there is nothing to prove. We already observed that in this case it holds $Z_{j+1}\subset\zero_{j+1}.$ Therefore we consider points $\{p^1, \dots, p^n\}\subset\zero_{j+1}\cap Z_{j+1},$ and we define the competitor $\tilde{u}_{j+1}$ by replacing $u_{j+1}(p^i)\mapsto -1$ for every $i=1, \dots, n$. Since $\tempo = 1/4k$ we have that $\enF(\tilde{u}_{j+1},u_j) = \enF(u_{j+1},u_j)$.
    
    Finally, we have to prove that for every minimizer $u_{j+1}$ the set $I_{j+1}$ is a quasi-octagon. In view of the previous argument, we only need to show the claim for any minimizer $u_{j+1}$ such that $\#Z_{j+1} = \#Z_j$. But in this case, the proof follows arguing exactly as in Lemma~\ref{lemma:5_optimal_shape_not_surrounded}-(i).
\end{proof}

\begin{remark}\label{equality_in_1}
Suppose that $\displaystyle \tempo=\frac{1}{4k}$. In view of Proposition~\ref{prop:situation_1_not_occurs}-(ii), at each minimization step $j$ the shape of $I_{j+1}$ for any minimizer $u_{j+1}$ can be determined by following the analysis of Section~\ref{sec:gamma<2}, in particular by applying Propositions~\ref{prop:5.3_beginning_movement}, \ref{prop:5.21}, \ref{prop:sides_movement_when_set_remains_octagon}, and \ref{prop:minimizing_movement_null_diagonal}. However, the limit flow depends crucially on whether the quantity $\#Z_j^\varepsilon$ remains constant with respect to $j$ along the minimizing movements. Indeed, if $\#Z_j^\varepsilon$ is allowed to vary from step to step, then the convergence $\varepsilon\,\#Z_j^\varepsilon\to\lambda$ as $\varepsilon\to 0$, assumed in Section~\ref{sec:gamma<2}, would no longer hold. 

One possible way to overcome this difficulty is to require that, at each time step, one selects a ``good minimizer'' $u_{j+1}$ satisfying $\#Z_{j+1}=\#Z_j$. With this choice, the analysis of the limit flow coincides with that carried out in Section~\ref{sec:gamma<2}.

Finally, we observe that if the initial data are prescribed as a family of functions $(u_0^\varepsilon)_\varepsilon$ such that $\#Z_0^\varepsilon\ge \#\zero_0^\varepsilon$ for every $\varepsilon$, then Proposition~\ref{prop:situation_1_not_occurs}-(ii) ensures that $\#Z_{j+1}=\#Z_j$ at each time step. As a consequence, the analysis of Section~\ref{sec:gamma<2} applies once again.
\end{remark}

\noindent{\bf The effect of \eqref{situation:2} on the minimizing movements not surrounded by surfactant.}
In the following proposition we show that if case \eqref{situation:2} holds, then starting from an initial datum which is not surrounded by surfactant, it holds that the minimizing sets are surrounded by surfactant for every $j\ge 1$.

\begin{proposition}
    \label{prop:il minimizzante circondato da surfactant}
    Let $\gamma=2$ and suppose that $\zeta >\frac{1}{3k-1}$. Let $u_0\in\A_\e$ be satisfying assumption \eqref{H} in Subsection \ref{subsub_H} and such that $Z_0\subsetneq \partial^+I_0$. Let $u_{1}$ be a minimizer of $\enF(\cdot, u_0)$.
    Then there exists $\bar{\e}$ such that for every $\e\le\bar{\e}$ it holds $\partial^+\I_{1}\subset \Z_{1}$. 
    In particular the subsequent evolution depends only on whether or not the condition $\zeta>\frac{1}{2-2k}$ is verified.
    %, as proved in Lemma~\ref{lemma: il minimizzante circondato da surfactant}.
\end{proposition}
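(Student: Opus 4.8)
The plan is to exploit the dissipation/energy trade-off encoded in relation~\eqref{situation:2} to show that, after one minimization step, every boundary cell of $I_1$ carries surfactant. First I would invoke Proposition~\ref{prop:connectedness} to fix $\bar\e$ so that for $\e\le\bar\e$ the set $\I_1$ is a connected staircase set contained in $\I_0$ with $d_\hh(\partial A_0,\partial A_1)\le\e^\mu$, and I would combine this with Lemma~\ref{lemma:surfactant_placement} (via Remark~\ref{rem:blu_on_the_boundary}) to reduce to the dichotomy: either $\partial^+\I_1\subset\Z_1$, which is exactly the claim, or $\Z_1\subsetneq\partial^+\I_1$. So I argue by contradiction, assuming $\Z_1\subsetneq\partial^+\I_1$; then there exists a point $\bar p\in\partial^+\I_1\setminus\Z_1$ with $u_1(\bar p)=-1$ (the complement of $\Z_1$ in $\partial^+\I_1$ consists of phase $-1$ cells, again by Lemma~\ref{lemma:surfactant_placement}).

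The heart of the argument is then the local surgery already sketched in the paragraph preceding the statement (Figure~\ref{fig:1.6_2}). Since $\I_1$ is a staircase set, a boundary cell $\bar p$ with $u_1(\bar p)=-1$ satisfies $\#(\mathcal{N}(\bar p)\cap\I_1)\le 2$; I would argue that among such cells one can choose $\bar p$ with $\#(\mathcal{N}(\bar p)\cap\I_1)=1$ \emph{and} with at least one surfactant neighbour, $\#(\mathcal{N}(\bar p)\cap\Z_1)\ge1$. This is where one uses that the surfactant does not initially surround $I_0$ but has been (partially) placed along $\partial^+\I_1$: since $\Z_1$ is nonempty and $\Z_1\subsetneq\partial^+\I_1$, somewhere along the interface a phase-$-1$ boundary cell abuts a surfactant boundary cell (this is the same connectedness-along-the-boundary argument used in Lemma~\ref{lemma:surfactant placement_around_quasi_octagon}(i) and in the proof of Proposition~\ref{prop:minimizing_movement_null_diagonal}(ii)); if additionally $\#(\mathcal N(\bar p)\cap I_1)=2$ one first moves $\bar p$ so as to reduce to the corner case $\#(\mathcal N(\bar p)\cap I_1)=1$ without increasing $\enF$, exactly as in Case~1 of Lemma~\ref{lemma:surfactant placement_around_quasi_octagon}. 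Define $\tilde u_1$ by $u_1(\{\bar p\})\mapsto 0$. Then $I_{\tilde u_1}=I_1$, so $\dis^1_\e(\tilde u_1,u_0)=\dis^1_\e(u_1,u_0)$, while the energy changes by $\E_\e(\tilde u_1)-\E_\e(u_1)\le 3\e(1-k)-2\e$ (the cell $\bar p$ had one phase-$1$ neighbour — contributing $2$ to the Ising-type energy, hence $-2\e$ after rescaling — and becoming $0$ costs $3\e(1-k)$ at worst, counting its three non-phase-$1$ nearest-neighbour bonds), and the surfactant dissipation increases by $\frac{\e^\gamma}{\tau}=\frac{\e}{\tempo}$ since $\gamma=2$ and $\tau=\tempo\e$. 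Thus
\[
\E_\e(\tilde u_1)-\E_\e(u_1)+\frac{\e^\gamma}{\tau}\bigl(\dis^0_\e(\tilde u_1,u_0)-\dis^0_\e(u_1,u_0)\bigr)\le 3\e(1-k)-2\e+\frac{\e}{\tempo}<0
\]
precisely because $\tempo>\frac{1}{3k-1}$ is equivalent to $3(1-k)-2+\frac{1}{\tempo}<0$. This contradicts the minimality of $u_1$, so $\partial^+\I_1\subset\Z_1$. The final sentence of the statement then follows by feeding $u_1$ (which now satisfies $\partial^+\I_1\subset\Z_1$) into Lemma~\ref{lemma: il minimizzante circondato da surfactant}, whose two cases are governed by whether $\tempo<\frac{1}{2-2k}$ or $\tempo\ge\frac{1}{2-2k}$.

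The main obstacle I anticipate is the combinatorial bookkeeping needed to guarantee the existence of a suitable $\bar p$ with exactly one phase-$1$ neighbour and at least one surfactant neighbour, and to make the energy count $\E_\e(\tilde u_1)-\E_\e(u_1)\le 3\e(1-k)-2\e$ airtight in the borderline configurations (e.g. when $\bar p$ sits at a convex corner of the staircase set, or when $\I_1\cup\Z_1$ degenerates to a rectangle). Handling these cases will require the same reductions used in Lemma~\ref{lemma:surfactant placement_around_quasi_octagon} and Lemma~\ref{lemma:shape_optimality}: first moving surfactant cells along the boundary by horizontal/vertical translations that do not increase $\enF$ so that $\bar p$ can be taken in the ``corner'' position, and invoking Remark~\ref{rem:rectangle} to dispose of the rectangular sub-case separately (there one simply converts any one phase-$-1$ boundary cell adjacent to $\Z_1$ into surfactant, again with energy change $\le 3\e(1-k)-2\e$). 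Everything else is a direct adaptation of arguments already carried out in the paper.
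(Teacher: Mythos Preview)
Your approach is correct in spirit and shares the paper's core mechanism: argue by contradiction, locate a point whose conversion to surfactant yields the energy drop $-2\e+3\e(1-k)$ that beats the $\e/\tempo$ penalty under~\eqref{situation:2}, and invoke Lemma~\ref{lemma: il minimizzante circondato da surfactant} for the final sentence. The difference is in how the point is chosen. You restrict yourself to $\bar p\in\partial^+\I_1$ with $u_1(\bar p)=-1$, one $I_1$-neighbour and one $Z_1$-neighbour, and then propose further reductions to manufacture such a $\bar p$ in borderline configurations. The paper instead allows \emph{two} kinds of points: either $p\in\partial^+\I_1\cap\{u_1=-1\}$ with a $Z_1$-neighbour (your case), \emph{or} $p\in\one_1\subset I_1$ with a $\{u_1=-1\}$-neighbour. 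At least one of these exists whenever $\partial^+\I_1\not\subset Z_1$, and in both cases $u_1(p)\mapsto 0$ gives the same energy bound $\le -2\e+3\e(1-k)$; the only cost of the interior case is a change of order $\e^{1+\mu}$ in $\dis^1_\e$, which is negligible. This two-case dichotomy dispenses in one stroke with all the ``obstacles'' you list, in particular the situation $Z_1=\emptyset$ (not excluded by the hypothesis $Z_0\subsetneq\partial^+I_0$), where your search for a $-1$ cell with a surfactant neighbour necessarily fails; your sketch does not cover this, and the reference to Case~1 of Lemma~\ref{lemma:surfactant placement_around_quasi_octagon} (which moves surfactant, not $-1$ cells) is not the right tool. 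So your argument works but is more laborious, whereas admitting the interior point $p\in\one_1$ buys you a uniform treatment with no extra combinatorics.
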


\begin{proof}
    Let $\mu<1/4$. By Proposition \ref{prop:connectedness} there exists $\bar{\e}$ such that for every $\e\le\bar{\e}$ the set $\I_{1}$ is a connected staircase set contained in $\I_0$ such that $d_\hh(\partial A_0, \partial A_1)\le\e^\mu$. We argue by contradiction. Suppose that $\partial^+I_{1}\not\subset Z_{1}$. Then, we observe that either there exists $p\in\one_{1}$ such that $\#\nn(p)\cap\{u_{1} = -1\}\ge 1$, or there exists $p\in\partial^+I_{1}\cap\{u_{1} = -1\}$ such that $\#\nn(p)\cap Z_{1}\ge 1.$ In both cases we build a competitor $\tilde{u}_{1}$ by replacing $u_{1}(p)\mapsto 0$. Since  \eqref{situation:2} holds, we have 
    \[\Bigl(\E_\e(\tilde{u}_{1})-\E_\e(u_{1})\Bigl)+\frac{\e^\gamma}{\tau}\bigl(\dis^0_\e(\tilde{u}_{1}, u_0)+\dis^0_\e(u_{1}, u_0)\bigl)\le \Bigl(-2\e+3\e(1-k)\Bigl)+\frac{\e}{\tempo}<0.\]
     Moreover since in the first case the variation of the dissipation is of order $\e^{1+\mu}\ll\e$, we deduce $\enF(\tilde{u}_{1}, u_0)<\enF(u_{1}, u_0),$ which contradicts the minimality of $u_{j+1}.$ The last part of the Proposition is a direct consequence of \ref{lemma: il minimizzante circondato da surfactant}
    \end{proof}

In view of the previous proposition and the last two paragraphs, the only remaining case to analyse is that of the minimizing movements under the assumptions that \eqref{situation:2} does not hold, \eqref{situation:1} does hold, and the set $I_j$ is not surrounded by surfactant. Since in this case \eqref{situation:1} holds, in view of the discussion at the beginning of this section, for simplicity we may suppose that the initial datum $u_0$ verifies $\zero_0\subset\Z_0$.

\begin{proposition}\label{lemma:ultimo}
    Let $\gamma=2$, suppose that $\zeta >\frac{1}{4k}$ and that $\tempo\le\frac{1}{3k-1}$. Let $u_j\in\A_\e$ be satisfying assumption \eqref{H} of Subsection \ref{subsub_H} and such that $I_j = \tilde{I}_j\cup(\cup_{i=1}^4\PP_{j, i})$ is a quasi-octagon with $\min\{\tilde{D}_{j, i}:i =1, \dots, 4\}\ge\bar{c}$ (where $\bar{c}$ is the constant of \eqref{H}), and such that \begin{equation}\label{eq:boundary_cardinality_bound}\#\Z_j<\#\partial^+\I_j-8\e^{\mu-1}.
    \end{equation} Then there exists $\bar{\e}$ such that for every $\e\le\bar{\e}$ the set $\I_{j+1}$ is a quasi-octagon such that $\Z_{j+1}\subset\partial^+\I_{j+1}$ and $\#\Z_{j+1}\geq \#\Z_{j}$.

    Suppose moreover that \begin{equation}\label{eq:5.32_uguale}
        \sum_{i=1}^4\Biggl(\frac{D_{j, i}}{\sqrt{2}\e}+\bmax_{j, i}-2\amin_{j, i}\Biggl)\le \#Z_j-2,
    \end{equation}
    (which is the same as \eqref{eq:5.32}), where $\amin_{j, i}$ and $\bmax_{j, i}$ are defined as in \eqref{eq:alpha_bar}. 
    Then the following properties hold true.
    \begin{itemize}
        \item[(i)] If $\tempo<\frac{1}{3k-1}$ then $\#Z_{j+1} = \#Z_j.$ In particular Proposition \ref{prop:5.21} holds for the sides displacements. 
        \item[(ii)] If $\tempo = \frac{1}{3k-1}$ then for every minimizer $u_{j+1}$ there exists an other minimizer $\tilde{u}_{j+1}$ such that $\{\tilde{u}_{j+1} = 1\} = \{u_{j+1} = 1\}$ and $\#\{\tilde{u}_{j+1} = 0\} = \#Z_j$.
    \end{itemize}
\end{proposition}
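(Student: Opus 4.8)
The plan is to run, in the case $\gamma=2$, the arguments of Lemma~\ref{lemma:5_optimal_shape_not_surrounded} and Proposition~\ref{prop:5.21}, the essential new point being that the surfactant mass $\#\Z_{j+1}$ is now a genuine degree of freedom to be controlled via the two standing inequalities $\tempo>\frac1{4k}$ and $\tempo\le\frac1{3k-1}$. The backbone is Proposition~\ref{prop:connectedness}: for $\e$ small $\I_{j+1}$ is a connected staircase set with $\I_{j+1}\subset\I_j$ and $d_{\mathcal H}(\partial A_j,\partial A_{j+1})\le\e^\mu$, so that, as in the first step of Lemma~\ref{lemma:5_optimal_shape_not_surrounded}, $\#\partial^+\I_{j+1}\ge\#\partial^+\I_j-8\e^{\mu-1}>\#\Z_j$ by \eqref{eq:boundary_cardinality_bound}. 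The inclusion $\Z_{j+1}\subset\partial^+\I_{j+1}$ follows directly: if some $q\in\Z_{j+1}$ had $\#(\nn(q)\cap\I_{j+1})=0$, then Lemma~\ref{lemma:surfactant_placement} forces $\partial^+\I_{j+1}\subset\Z_{j+1}$, hence $\#\Z_{j+1}\ge\#\partial^+\I_{j+1}>\#\Z_j$, and turning such a stray $q$ into phase $-1$ strictly lowers $\E_\e$ while lowering $\frac{\e^\gamma}{\tau}\dis^0_\e$ and leaving $\dis^1_\e$ fixed, a contradiction. With $\Z_{j+1}\subset\partial^+\I_{j+1}$ at hand, Remark~\ref{rem:blu_on_the_boundary} becomes available and the quasi-octagonal shape of $\I_{j+1}$ is obtained verbatim from the second step of Lemma~\ref{lemma:5_optimal_shape_not_surrounded} through the case analysis of Lemma~\ref{lemma:shape_optimality}.

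Next I would prove $\#\Z_{j+1}\ge\#\Z_j$. Assume not and set $m:=\#\Z_j-\#\Z_{j+1}\ge1$; since $\#\partial^+\I_{j+1}>\#\Z_j>\#\Z_{j+1}$, the set $\I_{j+1}$ is not surrounded, so there is room to add $m$ surfactant particles at $\partial^+$–positions currently labelled $-1$, and (using $\min_i\tilde D_{j,i}\ge\bar c$, so $\zero_{j+1}\ne\emptyset$) I add them one at a time, each adjacent to the already–placed surfactant. A particle added at a still–empty corner of $\zero_{j+1}$ lowers $\E_\e$ by at least $4k\e$, a particle added contiguously on a flat side changes $\E_\e$ by at most $\e(1-3k)$, and in both cases $\frac{\e^\gamma}{\tau}\dis^0_\e$ decreases by $\e/\tempo$ (the count moves towards $\#\Z_j$) and $\dis^1_\e$ is unchanged; hence, using $1/\tempo\ge 3k-1$ and $k>1/3$, the total change is $\le\e(1-3k)-\e/\tempo\le 2\e(1-3k)<0$, contradicting minimality. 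Thus $\#\Z_{j+1}\ge\#\Z_j$, which completes the first part.

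For the second part I would first re-run the competitors $u^{\pm}$ of the proof of Proposition~\ref{prop:5.21}: they keep $\E_\e$ and $\dis^1_\e$ balanced exactly as there and now alter $\frac{\e^\gamma}{\tau}\dis^0_\e$ by only $O(\e)$, absorbed into the error terms, so one still gets $\alpha_{j,i}\ge\amin_{j,i}$ and $\beta_{j,i}\le\bmax_{j,i}$, whence by \eqref{eq:5.32_uguale} and \eqref{eq: Lung. lati} one obtains $\#\zero_{j+1}=\#\zero_j+\som(\beta_{j,i}-2\alpha_{j,i})\le\#\Z_j-2$. In particular $\#\Z_{j+1}\ge\#\Z_j\ge\#\zero_{j+1}+2$, so $\Z_{j+1}$ carries at least two particles adjacent to the parallel sides, which by the segment structure obtained as in Lemma~\ref{lemma:surfactant placement_around_quasi_octagon} supplies an endpoint $\bar p\in\Z_{j+1}\setminus\zero_{j+1}$ with $\#(\nn(\bar p)\cap\I_{j+1})=1$, $\#(\nn(\bar p)\cap\Z_{j+1})=1$ and its remaining two neighbours in phase $-1$. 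Replacing $u_{j+1}(\bar p)\mapsto-1$ changes $\E_\e$ by exactly $\e(3k-1)$, leaves $\dis^1_\e$ unchanged (it touches only a cell outside $\I_{j+1}$), and — when $\#\Z_{j+1}>\#\Z_j$ — lowers $\frac{\e^\gamma}{\tau}\dis^0_\e$ by $\e/\tempo$. If $\tempo<\frac1{3k-1}$ the net change is strictly negative, forcing $\#\Z_{j+1}=\#\Z_j$ by the first part, and then Remark~\ref{rem:a_priori_estimate_second_phase} yields \eqref{eq:sides_movement_quasi_octagon}–\eqref{eq:sides_movement_quasi_octagon_2}; this is (i). If $\tempo=\frac1{3k-1}$ the move is $\enF$–neutral, so, starting from any minimizer with $\#\Z_{j+1}>\#\Z_j$, I iterate it: each application removes one flat–side endpoint particle, leaves $\{u_{j+1}=1\}$ and $\enF$ unchanged (hence preserves minimality), never touches $\zero_{j+1}$, and the bound $\#\zero_{j+1}\le\#\Z_j-2$ guarantees a flat–side particle is available until the count reaches $\#\Z_j$; the resulting $\tilde u_{j+1}$ is the required minimizer, which is (ii).

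The main obstacle I expect is ruling out, under \eqref{eq:5.32_uguale}, that the minimizer $\I_{j+1}$ is fully surrounded by surfactant — so that the endpoint $\bar p$ with two phase $-1$ neighbours exists and the iteration of (ii) terminates correctly. This requires combining \eqref{eq:boundary_cardinality_bound}, the $O(\e)$–smallness of $d_{\mathcal H}(\partial A_j,\partial A_{j+1})$ obtained by re-running Lemma~\ref{lemma:5_optimal_shape_not_surrounded}-(iv) (valid here since $\min_i\tilde D_{j,i}\ge\bar c$), and a global comparison between the energy of a surrounded configuration and that of one in which the surfactant covers only the diagonals plus an $O(1)$ excess: the point is that completing the surfactant layer along a flat side of macroscopic length is energetically unfavourable whenever $\tempo<\frac1{3k-1}$ and only $\enF$–neutral (up to $O(1)$ corner corrections) at the threshold, so one may always reduce to the non–surrounded situation. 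The remaining bookkeeping (the degenerate case where $\I_{j+1}\cup\Z_{j+1}$ is a rectangle, and the exact contribution of $u^\pm$ to $\dis^0_\e$) is routine and follows the patterns already established in Section~\ref{sec:gamma<2}.
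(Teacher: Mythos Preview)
Your proof is correct and follows the paper's route essentially verbatim: the same use of Proposition~\ref{prop:connectedness} and of the first two steps of Lemma~\ref{lemma:5_optimal_shape_not_surrounded} for $Z_{j+1}\subset\partial^+I_{j+1}$ and the quasi-octagon shape, the same one-particle addition for $\#Z_{j+1}\ge\#Z_j$, and the same endpoint-removal via Lemma~\ref{lemma:surfactant placement_around_quasi_octagon}(i) for (i) and its iteration for (ii).

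One clarification on your final paragraph: the ``main obstacle'' you anticipate does not in fact arise. For a quasi-octagon the set $\partial^+I_{j+1}$ is never connected in the nearest-neighbour graph: between each flat-side arc (e.g.\ $\PP_{j+1,1}-\e e_2$) and the adjacent pieces $\PP^{\pm}_{j+1,1}$ or $\tilde\DD^+_{j+1,i}$ there is always a diagonal gap, for instance at $p^{j+1,1}-\e e_1-\e e_2$, which has no $I_{j+1}$-neighbour and hence lies outside $\partial^+I_{j+1}$. Consequently the endpoint of the flat-side arc, say $p^{j+1,1}-\e e_2$, automatically has exactly one $I$-neighbour, one $Z$-neighbour, and two phase $-1$ neighbours \emph{even when} $Z_{j+1}=\partial^+I_{j+1}$. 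Your removal move is therefore always available and the paper proceeds exactly as you do, without any separate global comparison. A minor imprecision: the competitors $u^\pm$ of Proposition~\ref{prop:5.21} preserve $\#Z$ exactly (they only relocate surfactant), so $\dis^0_\e$ is unchanged rather than merely perturbed by $O(\e)$; this makes the order-$\e$ comparison for $\alpha_{j,i}\ge\amin_{j,i}$ and $\beta_{j,i}\le\bmax_{j,i}$ go through cleanly.
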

\begin{proof}
Arguing as in the first step of Lemma~\ref{lemma:5_optimal_shape_not_surrounded} we can prove that $\#\partial^+I_{j+1}>\#Z_j$. It follows that $Z_{j+1}\subset\partial^+I_{j+1}$. Indeed otherwise from Remark~\ref{rem:blu_on_the_boundary} we would have $\partial^+I_{j+1}\subsetneq Z_{j+1}$ and $\#Z_{j+1}>\#\partial^+I_{j+1}>\#Z_j.$ In particular there would exist $p\in Z_{j+1}\setminus \partial^+I_{j+1}$. We can define a competitor $\tilde{u}_{j+1}$ by replacing $u_{j+1}(p)\mapsto -1$: since this transformation strictly reduces the energy $\enF(\cdot)$ we have a contradiction.

Knowing that $Z_{j+1}\subset I_{j+1},$ the proof that $I_{j+1}$ is a quasi-octagon follows as in the second step of the proof of Lemma~\ref{lemma:5_optimal_shape_not_surrounded}. Moreover, again as in Lemma~\ref{lemma:5_optimal_shape_not_surrounded}, we can prove that there exists a constant $c$ depending only on $\bar{c}$ such that $\beta_{j, i}\le c$ for all $i$.  Therefore, from \eqref{eq:boundary_cardinality_bound}, we deduce that 
    \begin{equation*}
    \label{eq:7.1}
              \#\partial^+I_j-c\le\#\partial^+I_{j+1}\le\#\partial^+I_{j},
    \end{equation*}
          for a positive constant $c>0$ which we do not rename. We prove that $\#\Z_{j+1}\geq \#\Z_{j}$. To this end we argue by contradiction. Let us assume that $\#\Z_{j+1}<\#\Z_{j}$. Since $\#Z_{j+1}<\#\partial^+I_{j+1}$, we deduce that either there exists $\overline{p} \in \one_{j+1}$ such that $\#(\mathcal{N}(\overline{p})\cap \{u_{j+1}=-1\})=1$ and $\#(\mathcal{N}(\overline{p})\cap Z_{j+1})=1$, or there exists $\bar{p}\in\partial^+I_{j+1}\cap\{u_{j+1} = -1\}$ such that $\#(\nn(\bar{p})\cap Z_{j+1}) = 1$. In both cases we define a competitor $\tilde{u}_{j+1}$ replacing $u_{j+1}(\overline{p})\mapsto 0$ (see Figure \eqref{fig:ultimo_disegno}). 
        \begin{figure}[H]
            \centering
            \resizebox{0.35
            \textwidth}{!}{\tikzset{every picture/.style={line width=0.75pt}} %set default line width to 0.75pt        

\begin{tikzpicture}[x=0.75pt,y=0.75pt,yscale=-1,xscale=1]
%uncomment if require: \path (0,300); %set diagram left start at 0, and has height of 300

%Straight Lines [id:da5000433127145715] 
\draw [color={rgb, 255:red, 208; green, 2; blue, 27 }  ,draw opacity=1 ][fill={rgb, 255:red, 208; green, 2; blue, 27 }  ,fill opacity=1 ]   (120.44,100) -- (170.44,100) -- (170.44,110) -- (180.44,110) -- (180.44,120) -- (190.44,120) -- (190.44,130) -- (200.44,130) -- (200.44,140) -- (210.44,140) -- (210.44,180) -- (190.44,180) -- (190.44,150) -- (190.44,140) -- (180.44,140) -- (180.44,130) -- (170.44,130) -- (170.44,120) -- (160.44,120) -- (120.44,120) -- cycle ;
%Straight Lines [id:da9252346857705802] 
\draw [color={rgb, 255:red, 208; green, 2; blue, 27 }  ,draw opacity=1 ][fill={rgb, 255:red, 208; green, 2; blue, 27 }  ,fill opacity=1 ]   (250.44,100) -- (290.44,100) -- (290.44,110) -- (300.44,110) -- (310.44,110) -- (310.44,120) -- (320.44,120) -- (320.44,130) -- (330.44,130) -- (330.44,140) -- (340.44,140) -- (340.44,180) -- (320.44,180) -- (320.44,150) -- (320.44,140) -- (310.44,140) -- (310.44,130) -- (300.44,130) -- (300.44,120) -- (290.44,120) -- (250.44,120) -- cycle ;
%Shape: Square [id:dp6687306368817776] 
\draw  [color={rgb, 255:red, 126; green, 211; blue, 33 }  ,draw opacity=1 ][fill={rgb, 255:red, 126; green, 211; blue, 33 }  ,fill opacity=1 ] (160.44,90) -- (170.44,90) -- (170.44,100) -- (160.44,100) -- cycle ;
%Shape: Square [id:dp8280392254939788] 
\draw  [color={rgb, 255:red, 74; green, 144; blue, 226 }  ,draw opacity=1 ][fill={rgb, 255:red, 74; green, 144; blue, 226 }  ,fill opacity=1 ] (170.44,100) -- (180.44,100) -- (180.44,110) -- (170.44,110) -- cycle ;
%Shape: Square [id:dp44236825505846467] 
\draw  [color={rgb, 255:red, 74; green, 144; blue, 226 }  ,draw opacity=1 ][fill={rgb, 255:red, 74; green, 144; blue, 226 }  ,fill opacity=1 ] (180.44,110) -- (190.44,110) -- (190.44,120) -- (180.44,120) -- cycle ;
%Shape: Square [id:dp6884282641716802] 
\draw  [color={rgb, 255:red, 74; green, 144; blue, 226 }  ,draw opacity=1 ][fill={rgb, 255:red, 74; green, 144; blue, 226 }  ,fill opacity=1 ] (190.44,120) -- (200.44,120) -- (200.44,130) -- (190.44,130) -- cycle ;
%Shape: Square [id:dp6833371302916958] 
\draw  [color={rgb, 255:red, 74; green, 144; blue, 226 }  ,draw opacity=1 ][fill={rgb, 255:red, 74; green, 144; blue, 226 }  ,fill opacity=1 ] (200.44,130) -- (210.44,130) -- (210.44,140) -- (200.44,140) -- cycle ;
%Shape: Square [id:dp6216253526043346] 
\draw  [color={rgb, 255:red, 126; green, 211; blue, 33 }  ,draw opacity=1 ][fill={rgb, 255:red, 126; green, 211; blue, 33 }  ,fill opacity=1 ] (290.44,90) -- (300.44,90) -- (300.44,100) -- (290.44,100) -- cycle ;
%Shape: Square [id:dp8285382252510135] 
\draw  [color={rgb, 255:red, 74; green, 144; blue, 226 }  ,draw opacity=1 ][fill={rgb, 255:red, 74; green, 144; blue, 226 }  ,fill opacity=1 ] (300.44,100) -- (310.44,100) -- (310.44,110) -- (300.44,110) -- cycle ;
%Shape: Square [id:dp20028915931842928] 
\draw  [color={rgb, 255:red, 74; green, 144; blue, 226 }  ,draw opacity=1 ][fill={rgb, 255:red, 74; green, 144; blue, 226 }  ,fill opacity=1 ] (290.44,100) -- (300.44,100) -- (300.44,110) -- (290.44,110) -- cycle ;
%Shape: Square [id:dp1366828338530951] 
\draw  [color={rgb, 255:red, 74; green, 144; blue, 226 }  ,draw opacity=1 ][fill={rgb, 255:red, 74; green, 144; blue, 226 }  ,fill opacity=1 ] (310.44,110) -- (320.44,110) -- (320.44,120) -- (310.44,120) -- cycle ;
%Shape: Square [id:dp9965613452625177] 
\draw  [color={rgb, 255:red, 74; green, 144; blue, 226 }  ,draw opacity=1 ][fill={rgb, 255:red, 74; green, 144; blue, 226 }  ,fill opacity=1 ] (320.44,120) -- (330.44,120) -- (330.44,130) -- (320.44,130) -- cycle ;
%Shape: Square [id:dp29534200191097193] 
\draw  [color={rgb, 255:red, 74; green, 144; blue, 226 }  ,draw opacity=1 ][fill={rgb, 255:red, 74; green, 144; blue, 226 }  ,fill opacity=1 ] (330.44,130) -- (340.44,130) -- (340.44,140) -- (330.44,140) -- cycle ;
%Shape: Grid [id:dp6096609931182392] 
\draw  [draw opacity=0] (100.44,50) -- (361.78,50) -- (361.78,210.33) -- (100.44,210.33) -- cycle ; \draw  [color={rgb, 255:red, 155; green, 155; blue, 155 }  ,draw opacity=0.77 ] (100.44,50) -- (100.44,210.33)(110.44,50) -- (110.44,210.33)(120.44,50) -- (120.44,210.33)(130.44,50) -- (130.44,210.33)(140.44,50) -- (140.44,210.33)(150.44,50) -- (150.44,210.33)(160.44,50) -- (160.44,210.33)(170.44,50) -- (170.44,210.33)(180.44,50) -- (180.44,210.33)(190.44,50) -- (190.44,210.33)(200.44,50) -- (200.44,210.33)(210.44,50) -- (210.44,210.33)(220.44,50) -- (220.44,210.33)(230.44,50) -- (230.44,210.33)(240.44,50) -- (240.44,210.33)(250.44,50) -- (250.44,210.33)(260.44,50) -- (260.44,210.33)(270.44,50) -- (270.44,210.33)(280.44,50) -- (280.44,210.33)(290.44,50) -- (290.44,210.33)(300.44,50) -- (300.44,210.33)(310.44,50) -- (310.44,210.33)(320.44,50) -- (320.44,210.33)(330.44,50) -- (330.44,210.33)(340.44,50) -- (340.44,210.33)(350.44,50) -- (350.44,210.33)(360.44,50) -- (360.44,210.33) ; \draw  [color={rgb, 255:red, 155; green, 155; blue, 155 }  ,draw opacity=0.77 ] (100.44,50) -- (361.78,50)(100.44,60) -- (361.78,60)(100.44,70) -- (361.78,70)(100.44,80) -- (361.78,80)(100.44,90) -- (361.78,90)(100.44,100) -- (361.78,100)(100.44,110) -- (361.78,110)(100.44,120) -- (361.78,120)(100.44,130) -- (361.78,130)(100.44,140) -- (361.78,140)(100.44,150) -- (361.78,150)(100.44,160) -- (361.78,160)(100.44,170) -- (361.78,170)(100.44,180) -- (361.78,180)(100.44,190) -- (361.78,190)(100.44,200) -- (361.78,200)(100.44,210) -- (361.78,210) ; \draw  [color={rgb, 255:red, 155; green, 155; blue, 155 }  ,draw opacity=0.77 ]  ;
%Straight Lines [id:da8571904193052109] 
\draw    (220.44,140) -- (248.44,140) ;
\draw [shift={(250.44,140)}, rotate = 180] [color={rgb, 255:red, 0; green, 0; blue, 0 }  ][line width=0.75]    (10.93,-3.29) .. controls (6.95,-1.4) and (3.31,-0.3) .. (0,0) .. controls (3.31,0.3) and (6.95,1.4) .. (10.93,3.29)   ;
%Straight Lines [id:da9204649164399188] 
\draw    (120.44,100) -- (170.44,100) -- (170.44,110) -- (180.44,110) -- (180.44,120) -- (190.44,120) -- (190.44,130) -- (200.44,130) -- (200.44,140) -- (210.44,140) -- (210.44,180) ;
%Straight Lines [id:da22606686596570147] 
\draw    (250.44,100) -- (290.44,100) -- (290.44,110) -- (310.44,110) -- (310.44,120) -- (320.44,120) -- (320.44,130) -- (330.44,130) -- (330.44,140) -- (340.44,140) -- (340.44,180) ;

% Text Node
\draw (160,99.4) node [anchor=north west][inner sep=0.75pt]  [font=\footnotesize]  {$\overline{p}$};
% Text Node
\draw (291,99.4) node [anchor=north west][inner sep=0.75pt]  [font=\footnotesize]  {$\overline{p}$};
% Text Node
\draw (132.44,59.4) node [anchor=north west][inner sep=0.75pt]    {$u_{j+1}$};
% Text Node
\draw (282.44,56.4) node [anchor=north west][inner sep=0.75pt]    {$\tilde{u}_{j+1}$};

\end{tikzpicture}}
         \caption{We consider $\overline{p}\in \one_{j+1}$ such that $\#(\mathcal{N}(\overline{p})\cap \{u_{j+1}=-1\})=1$ and $\#(\mathcal{N}(\overline{p})\cap Z_{j+1})=1$ and on the right we pictured the configuration that minimize the total energy.}
        \label{fig:ultimo_disegno}
        \end{figure}
\noindent We get $$\frac{1}{\e}\Bigl|\dis^1_\e(\tilde{u}_{j+1},u_j)-\dis^1_\e(u_{j+1},u_j)\Bigl|\le c\e^2$$ 
and 
$$\E_\e(\tilde{u}_{j+1})-\E_\e(u_{j+1})+\frac{\e}{\tau}\left(\dis^0_\e(\tilde{u}_{j+1},u_j)-\dis^0_\e(u_{j+1},u_j)\right)=3\e(1-k)-2\e-\frac{\e}{\tempo}<0.$$
In particular we deduce $\enF(\tilde{u}_{j+1})<\enF(u_{j+1}),$ which contradicts the minimality of $u_{j+1}$.

We suppose now that \eqref{eq:5.32_uguale} holds and we prove (i). Arguing as in the proof of Proposition \ref{prop:5.21} we have that $\#\zero_{j+1}\le \#Z_j-2.$ Suppose by contradiction that $\#Z_{j+1}>\#Z_j$. Then there exists $p\in Z_{j+1}\setminus \zero_{j+1}.$ As we showed in Lemma~\ref{lemma:surfactant placement_around_quasi_octagon}-(i) there exists $p\in\ Z_{j+1}\setminus\zero_{j+1}$ such that $\#\nn(p)\cap Z_{j+1} = 1$. We define a competitor $\tilde{u}_{j+1}$ by replacing $u_{j+1}(p)\mapsto -1$. Since \eqref{situation:2} does not hold we find $\enF(\tilde{u}_{j+1})<\enF(u_{j+1}),$ which contradicts the minimality of $u_{j+1}.$\vspace{5pt}\\
Finally we prove (ii). Let $u_{j+1}$ be a minimizer such that $\#Z_{j+1}>\#Z_j$. Arguing as before we have that there exists $p\in\ Z_{j+1}\setminus\zero_{j+1}$ such that $\#\nn(p)\cap Z_{j+1} = 1$. We define a competitor $\tilde{u}_{j+1}$ by replacing $u_{j+1}(p)\mapsto -1$, and we observe that in this case, since $\tempo = \frac{1}{3k-1}$ it holds $\enF(\tilde{u}_{j+1}, u_j) = \enF(u_{j+1}, u_j),$ therefore $\tilde{u}_{j+1}$ is a minimizer such that $\#\{\tilde{u}_{j+1} = 0\} = \#\{u_{j+1} = 0\}-1$. Inductively we prove the claim.
\end{proof}

\begin{remark}
    In the case $\gamma=2$ with $\tempo\le \frac{1}{3k-1}$, one can prove a result analogous to Theorem~\ref{teo:minimizing_movements_surfactant_covering_more_than_diagonals}, showing that the minimizing movements $(u_j^\varepsilon)_j$ converge to a limit flow. However, describing the side velocities in the limit flow is more delicate. The main difficulty is that, since the amount of surfactant may increase along a minimizing movement $j\mapsto u_j^\varepsilon$, the convergence $\varepsilon\,\#Z_j^\varepsilon\to\lambda\in\rr$ as $\varepsilon\to 0$ no longer holds. In particular, we cannot identify the qualitatively different stages of the limit flow solely from the lengths of the diagonal sides of the octagon in the limit flow, as was done in Theorem~\ref{teo:minimizing_movements_surfactant_covering_more_than_diagonals}.
\end{remark}

\end{document}